\DeclareMathAlphabet{\mathfr}{U}{euf}{m}{n}
\newtheorem{theorem}{Theorem}[section]
\newtheorem{definition}[theorem]{Definition}
\newtheorem{conjecture}[theorem]{Conjecture}
\newtheorem{proposition}[theorem]{Proposition}
\newtheorem{corollary}[theorem]{Corollary}
\newtheorem{lemma}[theorem]{Lemma}
\newtheorem{example}[theorem]{Example}
\newtheorem{remark}[theorem]{Remark}
\newcommand{\norm}[1]{\left\Vert#1\right\Vert}
\newcommand{\Q}{\mathbb Q}
\newcommand{\Qbar}{{\overline{\mathbb Q}}}
\newcommand{\Gal}{\mathrm{Gal}}
\newcommand{\HH}{\mathbb H}
\newcommand{\R}{\mathbb R}
\newcommand{\F}{\mathbb F}
\newcommand{\C}{\mathbb C}
\newcommand{\GL}{\mathrm{GL}}
\newcommand{\M}{\mathrm{M }}
\newcommand{\ab}{\operatorname{ab}}
\newcommand{\End}{\operatorname{End}}
\newcommand{\Hom}{\operatorname{Hom}}
\newcommand{\Frob}{\operatorname{Frob}}
\newcommand{\Fr}{\operatorname{Fr}}
\newcommand{\id}{\operatorname{id}}
\newcommand{\Rec}{\operatorname{Rec}}
\newcommand{\Aut}{\operatorname{Aut}}
\newcommand{\Jac}{\operatorname{Jac}}
\newcommand{\pp}{\mathfrak P}
\newcommand{\p}{\mathfrak{p}}
\newcommand{\s}{\mathfrak{s}}
\newcommand{\Res}{\operatorname{Res}}
\newcommand{\Tw}{\operatorname{Tw}}
\newcommand{\Tr}{\operatorname{Tr}}
\newcommand{\Ker}{\operatorname{Ker}}
\newcommand{\GSp}{\mathrm{GSp}}
\newcommand{\Sp}{\mathrm{Sp}}
\newcommand{\SU}{\mathrm{SU}}
\newcommand{\USp}{\mathrm{USp}}
\newcommand{\ST}{\mathrm{ST}}
\newcommand{\Lef}{\operatorname{L}}
\newcommand{\AST}{\operatorname{AST}}
\newcommand{\topo}{\operatorname{top}}
\newcommand{\WP}{\operatorname{WP}}
\numberwithin{equation}{section}
\newcommand{\Unitary}{\mathrm{U}}
\DeclareMathOperator{\Trace}{Trace}
\newcommand{\Aseq}[1]{\htmladdnormallink{A#1} {http://oeis.org/A#1}}
\newcommand{\cyc}[1]{{\mathrm{C}_#1}}
\newcommand{\dih}[1]{{\mathrm{D}_#1}}
\newcommand{\alt}[1]{{\mathrm{A}_#1}}
\newcommand{\sym}[1]{{\mathrm{S}_#1}}
\newcommand{\symtilde}[1]{{\tilde{\mathrm{S}}_#1}}
\newcommand{\bF}{{\mathbf{F}}}
\newcommand{\bigslant}[2]{{\left.\raisebox{.2em}{$#1$}\middle/\raisebox{-.2em}{$#2$}\right.}}
\renewcommand{\id}{1}
\begin{document}
\title{Sato-Tate distributions of twists of\\$y^2=x^5-x$ and $y^2=x^6+1$}
\author{Francesc Fit\'e and Andrew V. Sutherland}
\date{\today}

\maketitle

\begin{abstract}
We determine the limiting distribution of the normalized Euler factors of an abelian surface $A$ defined over a number field $k$ when $A$ is $\Qbar$-isogenous to the square of an elliptic curve defined over $k$ with complex multiplication.
As an application, we prove the Sato-Tate Conjecture for Jacobians of $\Q$-twists of the curves $y^2=x^5-x$ and $y^2=x^6+1$, which give rise to 18 of the 34 possibilities for the Sato-Tate group of an abelian surface defined over $\Q$. With twists of these two curves one encounters, in fact,  all of the $18$ possibilities for the Sato-Tate group of an abelian surface that is $\Qbar$-isogenous to the square of an elliptic curve with complex multiplication. Key to these results is the \emph{twisting Sato-Tate group} of a curve, which we introduce in order to study the effect of twisting on the Sato-Tate group of its Jacobian.
\end{abstract}
\tableofcontents

\section{Introduction}\label{section: introduction}

Let $A$ be an abelian variety of dimension $g$, defined over a number field $k$.
The generalized \emph{Sato-Tate conjecture}  predicts that the Haar measure of a certain compact subgroup $G$ of the unitary symplectic group $\USp(2g)$ governs the distribution of the normalized Euler factors $\bar{L}_\p(A,T)$, as~$\p$ varies over the primes of  $k$ where $A$ has good reduction. The normalized Euler factor at a prime $\p$   is the polynomial $\bar{L}_\p(A,T)=L_\p(A,T/q^{\nicefrac{1}{2}})$, where $q=\norm{\p}$ is the norm of $\p$, and  $L_\p(A,T)=\prod_{i=1}^{2g}(1-\alpha_iT)$ is the \emph{$L$-polynomial} of $A$ at $\p$.
The polynomial $L_\p(A,T)$ has the defining property that for each positive integer $n$
$$
\#A(\F_{q^n}) = \prod_{i=1}^{2g}(1-\alpha_i^n).
$$

To make this precise, we need to specify the group $G$, and to define what it means for $G$ to ``govern" the distribution of the polynomials $L_\p(A,T)$.
Associated to the abelian variety $A$, Serre \cite{Se12} has defined, in terms of $\ell$-adic monodromy groups, a compact real Lie subgroup of $\USp(2g)$, denoted $\ST(A)$ and called the \emph{Sato-Tate group} of $A$, satisfying the following property: for each prime $\p$ at which $A$ has good reduction, there exists a conjugacy class of $\ST(A)$ whose characteristic polynomial equals $\overline L_\p(A,T):=\sum_{i=0}^{2g}a_i(A)(\p)T^{i}$.\footnote{See also \cite[\S 2]{FKRS} for a brief summary of this construction; there the Sato-Tate group of~$A$ is denoted $\ST_A$, rather than $\ST(A)$.}
For $i=0,1,\ldots,2g$, let $I_i$ denote the interval $\left[-\binom{2g}{i},\binom{2g}{i}\right]$, and consider the map
\begin{equation}\label{equation: projection map}
\Phi_i\colon\ST(A)\subseteq \USp(2g)\rightarrow I_i\subseteq \R
\end{equation}
that sends an element of $\ST(A)$ to the $i$th coefficient of its characteristic polynomial. Let $\mu(\ST(A))$ denote the Haar measure of $\ST(A)$ and let $\Phi_{i,*}(\mu(\ST(A)))$ denote its image on $I_i$ by $\Phi_i$. We can now state the generalized Sato-Tate Conjecture.
\begin{conjecture}\label{conjecture: generalized Sato-Tate} For $i=0,1,\dots, 2g$, the $a_i(A)(\p)$'s are equidistributed on $I_i$ with respect to $\Phi_{i,*}(\mu(\ST(A)))$.\footnote{When we make equidistribution statements, we sort primes in increasing order by norm.} 
\end{conjecture}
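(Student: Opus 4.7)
The conjecture in full generality is widely open; the paper's premise, as stated in the abstract, restricts attention to abelian surfaces $A$ that are $\Qbar$-isogenous to the square of an elliptic curve $E/k$ with complex multiplication, where equidistribution becomes tractable. I will focus on that setting. My plan has two complementary parts: an algebraic step that pins down $\ST(A)$, and an analytic step that proves equidistribution of Frobenius conjugacy classes against the Haar measure of this group. Once equidistribution of the conjugacy classes is established, equidistribution of each $a_i(A)(\p)$ on $I_i$ follows by pushing forward along the map $\Phi_i$ of \eqref{equation: projection map}, since continuous pushforward preserves weak convergence.

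For the algebraic part, the potential isogeny $A_{\Qbar}\sim E^2$ and the CM of $E$ force $\ST(A)^{\circ}$ to be a rank-two torus sitting inside $\USp(4)$; the component group $\ST(A)/\ST(A)^{\circ}$ encodes how $\GalK$ permutes the CM data of $E$ together with how the isogeny $A\to E^2$ descends from $\Qbar$ to $k$. I would then use the twisting Sato-Tate group (the central object introduced by the authors) to package the Galois cohomology of quadratic and higher twists of a single Jacobian into one uniform object, and extract the $18$ possibilities for $\ST(A)$ as subgroups of that larger group indexed by cocycle classes. Realizing each of the $18$ cases concretely as the Sato-Tate group of a specific $\Q$-twist of $\Jac(y^2=x^5-x)$ or $\Jac(y^2=x^6+1)$ is the main combinatorial burden here.

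For the analytic part, the decisive input is that, up to finitely many primes, the $L$-function of $A$ factors as a product of Hecke $L$-functions attached to algebraic Grossencharacters on finite extensions of the CM field of $E$. I would invoke Hecke's equidistribution theorem for such Grossencharacters to produce equidistribution of Frobenius classes inside the identity component $\ST(A)^{\circ}$, and use the Chebotarev density theorem on the relevant finite Galois extension to control the distribution among the cosets of $\ST(A)/\ST(A)^{\circ}$. The main obstacle, I expect, is \emph{matching the two measures}: one must verify that the measure produced by combining the Hecke and Chebotarev equidistributions coincides with $\Phi_{i,*}(\mu(\ST(A)))$ in each of the $18$ twists. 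The cleanest route is to compute moments of the traces on both sides and identify them using the character theory of the compact group $\ST(A)$; the delicate point is tracking the action of the component group on the eigenvalues, which shifts from twist to twist and is exactly what the twisting Sato-Tate group machinery is designed to handle.
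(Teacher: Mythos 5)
Your outline matches the strategy the paper actually follows for the cases it proves: the Galois-module analysis of $\Hom(E_L,A_L)\otimes_{M,\sigma}\Qbar$ reduces $a_1(A)(\p)$ and $a_2(A)(\p)$ to data attached to the CM curve, Hecke Grossencharacter $L$-functions fed into Serre's equidistribution criterion give the refined equidistribution of Frobenius (Proposition \ref{proposition: equidistribution classes}), and the final identification with $\Phi_{i,*}(\mu(\ST(A)))$ is carried out by matching moments against the computations of \cite{FKRS}. The only point to make sharper is that ``combining Hecke and Chebotarev'' must be a genuine \emph{joint} equidistribution statement on the product $\Unitary(1)\times\Gal(F'/F)$ (independence of the Frobenius angle and the Frobenius class), which the paper obtains by applying Serre's criterion to the nontrivial twisted Hecke $L$-functions $L(\psi_{\infty,\sigma}^a\otimes\chi,s)$ rather than by invoking the two equidistribution theorems separately.
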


The original Sato-Tate conjecture addresses the case where $A$ is an elliptic curve $E/\Q$ without complex multiplication (CM), in which case $g=1$ and $\ST(A)=\USp(2)=\SU(2)$. This case of the conjecture has recently been proved; see \cite[p.~105]{Se12} for a complete list of references. For elliptic curves $E/k$ with complex multiplication,  there are two cases, depending on whether the CM field $M$ is contained in $k$ or not.  In the former case $\ST(E)$ is isomorphic to the unitary group $\Unitary(1)$ (embedded in $\SU(2)$), and in the latter case $\ST(E)$ is isomorphic to the normalizer of $\Unitary(1)$ in $\SU(2)$. Both cases follow from classical results that we recall in  \S\ref{section: Frob conj classes equidist}.

In all three cases arising for $g=1$, it is easy to see that the Sato-Tate group of $E$ is invariant under twisting: if $E'$ is isomorphic to $E$ over $\Qbar$ then $\ST(E')$ is isomorphic to $\ST(E)$.  However, when $g >1$ this is no longer true.

The purpose of this article is to study the possibilities for the Sato-Tate group of the Jacobians of twists of genus 2 curves defined over $\Q$ with many automorphisms (these arise for curves whose Jacobians are $\Qbar$-isogenous to the square of an elliptic curve with complex multiplication), and to prove that in these cases  Conjecture \ref{conjecture: generalized Sato-Tate} is true\footnote{Using the techniques of this article one can obtain analogous results for genus $3$ curves with many automorphisms, such as the Fermat and Klein quartics.}.

The curves we consider give rise to 18 of the 34 Sato-Tate groups that can occur for an abelian surface defined over $\Q$, yet they all lie in one of the two $\Qbar$-isomorphism classes corresponding to the  curves listed in the title of this article. This makes apparent the importance of understanding  the effect of twisting on the Sato-Tate group.

In the remainder of this section, we describe the two points in the moduli space of genus 2 curves that are the object of our study, and state our main result (Theorem \ref{theorem: main result}).  We also describe the numerical computations used to obtain  explicit examples that realize all the possibilities permitted by our main theorem.

Let us first fix some notation.
Throughout this paper $\Qbar$ denotes a fixed algebraic closure of $\Q$ that is assumed to include the number field $k$ and all of its algebraic extensions. 
Let $G_k=\Gal(\Qbar/k)$ denote the absolute Galois group of $k$.
For any algebraic variety $X$ defined over $k$ and any extension $L/k$, we use $X_L$ to denote the algebraic variety defined over $L$ obtained from $X$ by the base change $k\hookrightarrow L$.
For abelian varieties $A$ and $B$ defined over $k$, we write $A\sim B$ to indicate that there is an isogeny between $A$ and $B$ that is defined over $k$.
We may write $A\sim_k B$ to emphasize the field of definition, but this is redundant (to indicate an isogeny defined over an extension $L/k$ we write $A_L\sim B_L$).

\subsection{Two isolated points in the moduli space of genus 2 curves}\label{section: 2 moduli points}

Let $C$ be a curve of genus $g\leq 3$ defined over $k$. In \S \ref{section: twisting group}, we define the \emph{twisting Sato-Tate group $\ST_{\Tw}(C)$ of $C$}, a compact Lie group with the property that the Sato-Tate group of the Jacobian of any twist of $C$ is isomorphic to a subgroup of $\ST_{\Tw}(C)$. There is a well-known bijection between the set of twists of $C$ up to $k$-isomorphism and the cohomology group $H^1(G_k,\Aut(C_\Qbar))$, given by associating to a twist $C'$ of $C$ the class of the cocycle $\xi(\tau):=\phi(^{\tau}\phi)^{-1}$. Here $\phi$ is an isomorphism from $C'_\Qbar$ to $C_\Qbar$. Thus the group $\Aut(C_\Qbar)$ turns out to be a good measure of how complicated the twists of $C$ can be.

For the rest of \S 1 we let $k=\Q$ and $g=2$.
The automorphism group $\Aut(C_\Qbar)$ is then one of the following seven groups:
$$
\cyc 2,\medspace \dih 2,\medspace \dih 4,\medspace \dih 6,\medspace \mathrm C_{10},\medspace 2\dih 6,\medspace \symtilde 4.
$$
Here $\cyc n$ denotes the cyclic group of $n$ elements, $\dih n$ denotes the dihedral group of order $2n$, and $\sym n$ is the symmetric group on $n$ letters.
The groups $2\dih 6$ and $\symtilde 4$ denote certain double covers of $\dih 6$ and $\sym 4$ that are defined in section \S \ref{section: curves}. In the generic case, $\Aut(C_\Qbar)$ is isomorphic to $\cyc 2$.  This implies that every twist $C'$ of $C$ is quadratic, and we have $\ST(\Jac(C'))=\ST(\Jac(C))=\ST_{\Tw}(C)$.

We are interested in the opposite situation: the two exotic cases in which $\Aut(C_\Qbar)$ is as large as possible: $2\dih 6$ and $\symtilde 4$.   All genus 2 curves $C$ with $\Aut(C_\Qbar)$ isomorphic to $2\dih 6$ (resp.\ $\symtilde 4$) are isomorphic to
\begin{equation}\label{equation: curves}
y^2=x^6+1\qquad \text{(resp.\ $y^2=x^5-x$)},
\end{equation}
thus they constitute a single $\Qbar$-isomorphism class $\mathcal C_3$ (resp.\ $\mathcal C_2$) of curves, an isolated point in the moduli space of all genus 2 curves.

We shall choose representative curves $C^0_2$ and $C^0_3$ for $\mathcal C_2$ and $\mathcal C_3$ that are defined over $\Q$ and have particularly nice arithmetic properties.  We write $C^0$ (resp.\ $\mathcal C$) to denote either $C^0_2$ or $C^0_3$ (resp., either $\mathcal C_2$ or $\mathcal C_3$).
The key arithmetic property we require of $C^0$ is that its Jacobian be $\Q$-isogenous to $E^2$, where $E$ is an elliptic curve defined over $\Q$ (with CM).
This applies only to the curve $y^2=x^6+1$ listed in (\ref{equation: curves}), which we take as our representative $C^0_3$ for the class~$\mathcal C_3$, but it also applies to the curve
\begin{equation}
y^2=x^6-5x^4-5x^2+1,
\end{equation}
which we take as a \emph{better} representative $C^0_2$ for the class $\mathcal C_2$ of $y^2=x^5-x$.

The classification in \cite{FKRS} gives an explicit description of each of the 52 Sato-Tate groups that can and do arise in genus 2, as subgroups of $\USp(4)$, of which 32 have identity component (isomorphic to) $\Unitary(1)$.
The two curves listed in (\ref{equation: curves}) both appear in \cite{FKRS}, where they are shown to have Sato-Tate groups with identity component $\Unitary(1)$.
It follows that if $C$ is a twist of either of these curves, then $\ST(\Jac(C))$ also has identity component $\Unitary(1)$.
In fact, the representative curves for all 32 of the $\Unitary(1)$ cases listed in \cite{FKRS} are actually twists of one of the two curves in (\ref{equation: curves}) (possibly using an extended field of definition).

Among the 32 genus 2 Sato-Tate groups with identity component $\Unitary(1)$, two are maximal.  The first has component group $\sym 4 \times \cyc 2$ and is denoted $J(O)$, while the second has component group $\dih 6 \times \cyc 2$ and is denoted $J(D_6)$.
We will prove that $\ST_{\Tw}(C^0_2)=J(O)$ and $\ST_{\Tw}(C^0_3)=J(D_6)$, and, as a consequence, that the Sato-Tate group of any twist of $C^0_2$ (resp.\ $C^0_3$) is isomorphic to a subgroup of $J(O)$ (resp.\ $J(D_6)$).
Conversely, we will show that every Sato-Tate group that can occur over $\Q$ and is isomorphic to a subgroup of $J(O)$ (resp.\ $J(D_6)$) arises for some $\Q$-twist $C$ of $C^0_2$ (resp.\ $C^0_3$), by giving explicit examples in each case.\footnote{We call $C$ a $\Q$-\emph{twist} of $C^0$ if $C$ is defined over $\Q$ and $C_\Qbar\simeq C^0_\Qbar$.}
Most of the Sato-Tate groups $G$ with identity component $\Unitary(1)$ are actually subgroups of \emph{both} $J(O)$ and $J(D_6)$.
In such cases we exhibit $\Q$-twists of both $C^0_2$ and $C^0_3$ that have Sato-Tate group $G$.

\subsection{Main result}\label{subsection: main result}

Recall that $C^0$ denotes either $C^0_2$ or $C^0_3$. 
These are both genus 2 curves defined over $\Q$ whose Jacobians are $\Q$-isogenous to the square of an elliptic curve $E/\Q$ with CM by an imaginary quadratic field $M$ equal to $\Q(\sqrt{-2})$ or $\Q(\sqrt{-3})$, respectively.
Our main result is that Conjecture \ref{conjecture: generalized Sato-Tate} holds for the Jacobians of the $\Q$-twists $C$ of $C^0$.

In order to state the theorem more precisely, we introduce some notation.

\begin{definition}\label{definition: triple} For any $\Q$-twist $C$ of $C^0$, let $K/\Q$ (resp.\ $L/\Q$) denote the minimal extension over which all endomorphisms of $\Jac(C)_\Qbar$ (resp.\ homomorphisms from $\Jac(C)_\Qbar$ to $E_\Qbar$) are defined. Then we write $T(C)$ for the isomorphism class
$$[\Gal(L/\Q),\Gal(K/\Q),\Gal(L/M)]\,.$$
\end{definition}
We say that two triples of groups $(H_1,H_2,H_3)$ and $(H_1',H_2',H_3')$ are isomorphic if $H_i\simeq H_i'$ for $i=1,2,3$. We write $[H_1,H_2,H_3]$ for the isomorphism class of $(H_1,H_2,H_3)$, which we regard as a triple of abstract groups.

\begin{definition}\label{definition: zvector}
For any finite group $H$ with a subgroup $H_0$ and a normal subgroup~$N$, let $o(s,r)$ (resp.\ $\overline o(s,r)$) count the elements in $H_0$ (resp.\ $H\setminus H_0$) of order $s$ whose projection in $H/N$ has order $r$.
Let $z(H,N,H_0)$ denote the vector $[z_1,z_2]$, where
\begin{align*}
z_1&=[o(1,1),o(2,1),o(2,2),o(3,3),o(4,2),o(6,3),o(6,6),o(8,4),o(12,6)]\,,\\
z_2&=[\bar{o}(2,2),\bar{o}(4,2),\bar{o}(6,6),\bar{o}(8,4),\bar{o}(12,6)]\,.
\end{align*}
For any $\Q$-twist $C$ of $C^0$, write $$z(C):=[z_1(C),z_2(C)]:=z(\Gal(L/\Q),\Gal(L/K),\Gal(L/M))\,.$$
\end{definition}
We also define $o(r)=\sum_s o(s,r)$ and $\bar o(s)=\sum_r \bar o(s,r)$.
We note that in the cases of interest, $z(H,N,H_0)$ is $z(C)$ for some $\Q$-twist $C$ of $C^0$.
In this situation, $o(r)$ is the number of elements in $\Gal(L/M)$ whose projection to $\Gal(K/M)$ has order $r$, and $\overline o(s)$ is the number of elements of order $s$ in $\Gal(L/\Q)$ that are not in $\Gal(L/M)$.  Clearly
$$
\sum o(s,r)=\sum \overline o(s,r)=|\Gal(L/\Q)|/2.
$$
Moreover, we prove in Proposition \ref{proposition: trthetaMEA} that the only pairs $(r,s)$ for which $o(r,s)$ or $\bar{o}(r,s)$ can be nonzero are those that appear in the vectors $z_1$ and $z_2$.

Finally, let $L_p(C,T)$ denote the Euler factor of $C$ at a prime $p$ of good reduction.
We may write the normalized Euler factor $\overline L_{p}(C,T)=L_p(C,T/p^{\nicefrac{1}{2}})$ as
$$
\overline L_{p}(C,T)=T^4 + a_1(C)(p)T^3 + a_2(C)(p)T^2+ a_1(C)(p)T + 1.
$$
We are now ready to state our main theorem.

\begin{theorem}\label{theorem: main result} Let $C$ be a $\Q$-twist of $C^0$.
\begin{enumerate}[\rm (i)]
\item There are exactly $20$ possibilities for $T(C)$ if $C^0=C^0_2$, and $21$ if $C^0=C^0_3$.
\item The triple $T(C)$ and the vector $z(C)$ uniquely determine each other.
\item The triple $T(C)$ (or $z(C)$) determines the Sato-Tate group $\ST(\Jac(C))$.
\item  For $i=1,2$, the $a_i(C)(p)$'s are equidistributed on $I_i=[-\binom{4}{i},\binom{4}{i}]$ with respect to a measure $\mu(a_i(C))$ that is uniquely determined by  the vector $z(C)$. More precisely, the density function of $\mu(a_i(C))$ is continuous up to a finite number of points, and it is therefore uniquely determined by its moments:
$$
\begin{array}{ll}
\M_n[\mu(a_1(C))]=&\frac{1}{[L:\Q]}\bigl(o(1)2^n+o(3) + o(4)2^{n/2} + o(6)3^{n/2}\bigr) b_{0,n}\,,\\[6pt]
\M_n[\mu(a_2(C))]=&\frac{1}{[L:\Q]}\bigl(o(1)b_{4,n} + o(2)b_{0,n} + o(3)b_{1,n} + o(4)b_{2,n} + o(6)b_{3,n} \\[6pt]
 & \qquad\quad +\  \bar o(2)2^n + \bar o(4)(-2)^n + \bar o(6)(-1)^n + \bar o(12))\,.
\end{array}
$$
Here $b_{m,n}$ denotes the coefficient\footnote{For $m=0,1,2,3,4$ the $b_{m,n}$ form the sequences
\Aseq{126869}, \Aseq{0002426}, \Aseq{000984}, \Aseq{026375}, \Aseq{081671}, respectively, in the Online Encyclopedia of Integer Sequences \cite{OEIS}.} of $X^n$ in $(X^2+mX+1)^n$.
\item Conjecture \ref{conjecture: generalized Sato-Tate} holds for $C$.
\end{enumerate}
\end{theorem}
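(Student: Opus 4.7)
The plan is to exploit that $\Jac(C^0)$ is $\Q$-isogenous to $E^2$ with $E$ a CM elliptic curve over $\Q$, so for every $\Q$-twist $C$ of $C^0$ the base change $\Jac(C)_\Qbar$ is isogenous to $E_\Qbar^2$ and hence $\ST(\Jac(C))$ embeds in the twisting Sato-Tate group $\ST_{\Tw}(C^0)$, which earlier sections identify with $J(O)$ (for $C^0=C^0_2$) or $J(D_6)$ (for $C^0=C^0_3$). Both groups have identity component $\Unitary(1)$, so all the content is in the component structure, which is controlled by the Galois action encoded in the triple $T(C)$. This reduces the theorem to a finite combinatorial computation combined with one analytic input for equidistribution.

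For parts (i)--(iii), I would enumerate $H^1(G_\Q,\Aut(C^0_\Qbar))$ with $\Aut(C^0_\Qbar)\in\{\symtilde 4,2\dih 6\}$. A class $[\xi]$ factors through a finite Galois extension of $\Q$ whose Galois group is the image $\Gamma=\xi(G_\Q)$; the extension $L/\Q$ of Definition \ref{definition: triple} is essentially the splitting field of $\xi$ together with $M$, and $K\subseteq L$ is the subfield fixing the endomorphisms of $\Jac(C)_L$. I would run through the subgroups $\Gamma$ of $\Aut(C^0_\Qbar)$ (finitely many) and, for each, determine which cocycles are realized by an actual $\Q$-twist, using the explicit Weierstrass models for twists of $y^2=x^6+1$ and $y^2=x^6-5x^4-5x^2+1$. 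Tabulating the resulting triples $T(C)$ gives $20$ and $21$ cases, proving (i); the vector $z(C)$ can then be read off, and direct comparison across the finite list verifies the bijection in (ii). For (iii), the embedding $\ST(\Jac(C))\hookrightarrow\ST_{\Tw}(C^0)$ is cut out by the image of $\Gal(L/\Q)$ in the component group of $\ST_{\Tw}(C^0)$, which is determined by the triple.

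For part (iv), with $\ST(\Jac(C))$ realized concretely in $\USp(4)$, I would integrate the characteristic polynomial coefficients $a_1,a_2$ against the Haar measure componentwise. On each coset of $\ST(\Jac(C))^0=\Unitary(1)$, a diagonal generator acts by multiplication by a root of unity $\zeta$ whose order equals the order of the image of the component in $\Gal(K/M)$ (for cosets landing in $\Gal(L/M)$) or by a signed permutation on the off-diagonal part (for cosets outside). The elementary identity $\int_{\Unitary(1)}(\zeta z+\bar\zeta z^{-1})^n\,dz=b_{\zeta+\bar\zeta,n}$ and the corresponding vanishing of mixed odd moments immediately produce the stated expressions, with $o(r)$ counting contributions on $\Gal(L/M)$ components of the first moment type, and $\bar o(s)$ counting the off-coset contributions $(\pm 2)^n$, $(\pm 1)^n$, $1$ to the second moment formula.

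For part (v), the analytic input is classical: since $E$ has CM by $M$, the representation $\rho_\ell(\Jac(C))$ of $G_\Q$ is a sum of inductions of algebraic Hecke characters of $M$ (twisted by a finite-order character determined by $\xi$), so equidistribution of Frobenius conjugacy classes in $\ST(\Jac(C))$ follows from Hecke's equidistribution theorem applied to these characters, as recalled in \S\ref{section: Frob conj classes equidist}. The main obstacle I anticipate is part (i): matching abstract cocycle classes in $H^1(G_\Q,\Aut(C^0_\Qbar))$ to explicit $\Q$-twists, and controlling which pairs $(L,K)$ can be realized with $M\subseteq L$, is the most delicate bookkeeping step, since the subgroup lattices of $2\dih 6$ and $\symtilde 4$ are rich and the embedding problem $\Gal(L/\Q)\hookrightarrow\Aut(C^0_\Qbar)$ must be reconciled with the constraint that the normal subgroup $\Gal(L/M)$ map into the index-$2$ subgroup preserving the CM polarization.
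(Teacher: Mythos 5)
Your overall architecture (finite group-theoretic classification for (i)--(iii), Hecke-character equidistribution for the analytic parts) matches the paper's, but there are two genuine gaps.

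First, in parts (i)--(iii) your enumeration is set up over the wrong group, and even the correct enumeration does not by itself yield the counts $20$ and $21$. The image $\xi(G_\Q)$ of a cocycle is not in general a subgroup of $\Aut(C^0_\Qbar)$; the object that is a group is the image of the homomorphism $\lambda_\phi\colon\Gal(L/\Q)\hookrightarrow G_{C^0}=\Aut(C^0_M)\rtimes\Gal(M/\Q)$, and the enumeration must run over subgroups $H$ of this semidirect product subject to the constraints of Lemma \ref{lemma: primera} (that $H\cap(\Aut(C^0_M)\times\{1\})$ has index $2$ in $H$, and that $\Gal(L/K)$ corresponds to $\langle(w,1)\rangle\cap H_0$, which is the content of Proposition \ref{proposition: hyperprop}). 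Carrying this out (Proposition \ref{proposition: candidates}) produces $23$ isomorphism classes of triples in \emph{each} case, not $20$ and $21$. To reach the stated counts one needs two further inputs that your sketch omits: (a) the fact from \cite{FKRS} that the Sato-Tate groups $J(C_1)$, $J(C_3)$, and $C_{4,1}$ cannot arise for an abelian surface over $\Q$, which removes $3$ (resp.\ $2$) of the $23$ triples; and (b) an existence proof for each of the remaining $20$ (resp.\ $21$) triples. The paper obtains (b) only by exhibiting explicit curves found through a large computational search followed by explicit computation of $K$, $L$ and their Galois groups; the suggestion that one can read realizability off Cardona's parameterizations is exactly what the paper says is \emph{not} feasible, since the parameters control $\Gal(K/\Q)$ at best and give no handle on $\Gal(L/\Q)$.

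Second, in part (iv) you compute moments by integrating against the Haar measure of $\ST(\Jac(C))$, but that computes $\Phi_{i,*}(\mu(\ST(\Jac(C))))$, not the distribution of the arithmetic sequences $a_i(C)(p)$ --- the assertion that these agree is precisely what (iv) and (v) claim, so the argument as written is circular. The paper's route is the opposite: it proves equidistribution of the sequences directly, using the decomposition $V_\ell(\Jac(C))\otimes\Qbar_\ell\simeq(\theta\otimes V_\sigma(E))\oplus(\overline\theta\otimes V_{\overline\sigma}(E))$ together with the key analytic input of Proposition \ref{proposition: equidistribution classes}: the normalized Frobenius angles $\alpha_1(\p)$ of the CM curve remain $\mu(\Unitary(1))$-equidistributed when restricted to any single Frobenius conjugacy class of $\Gal(L/kM)$, proved by applying Serre's criterion to $\Unitary(1)\times\Gal(L/kM)$ and the non-vanishing of the Hecke $L$-functions $L((\psi_{\infty,\sigma})^a\otimes\chi,s)$ at the edge of the critical strip. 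Your paragraph on (v) gestures at Hecke's theorem but does not isolate this joint (product-group) equidistribution statement, and without it neither the equidistribution claim in (iv) nor Conjecture \ref{conjecture: generalized Sato-Tate} follows; the plain statement that $\rho_\ell$ is a sum of inductions of Hecke characters gives equidistribution of $a_1(E)(\p)$ alone, not of $a_i(C)(p)$, which mixes the CM angle with the class of $\Frob_p$ in $\Gal(L/\Q)$. Once that proposition is in place, (v) is deduced by comparing the resulting moment formulas with those of $\Phi_{i,*}(\mu(\ST(\Jac(C))))$ computed in \cite{FKRS}, which is the step your Haar-measure calculation actually performs.
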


We actually prove statement (iv) in greater generality, for an abelian surface~$A$ defined over a number field $k$ with $A_\Qbar\sim E^2_\Qbar$, where $E$ is an elliptic curve defined over $k$ with CM by a quadratic imaginary field $M$.
This is accomplished in \S\ref{section: squares} via Corollary \ref{corollary: a2 moments}, whose proof relies on a study of the structure of $\Hom(E_L,A_L)\otimes_M\Qbar$ as a Galois $\Qbar[\Gal(L/M)]$-module and a refined equidistribution statement of Frobenius elements of a CM elliptic curve when restricted to certain Galois conjugacy classes (see Corollary \ref{corollary: equidist frobenius classes}).
We compute the moments
$$
\M_n[a_i(C)]:=\lim_{x\to\infty}\frac{1}{\pi(x)} \sum_{p\le x}a_i(C)(p)^n,
$$
where $p$ varies over primes of good reduction, and prove equidistribution of the $a_i(C)(p)$'s with respect to a measure $\mu(a_i(C))$.  It follows that $\M_n[\mu(a_1(C))]=\M_n[a_i(C)]$. We devote $\S$\ref{section: curves} to the proofs of assertions (i), (ii) and (iii), which follow from Corollary \ref{corollary: num g(C)}, Proposition \ref{proposition: candidates}, and Proposition \ref{proposition: det ST}, respectively.
The final assertion (v) follows from (iii) and (iv): it is enough to check that  
for each of the 41 possibilities of $T(C)$, the formulas obtained for $\mu[a_i(C)]$ coincide with the ones obtained for $\Phi_{i,*}(\mu(\ST(\Jac(C))))$ in \cite{FKRS}.

\subsection{Numerical computations}

In $\S$\ref{section: computations} we show that all 41 of the possible triples $T(C)$ determined in section $\S$\ref{section: curves} actually arise for some $\Q$-twist $C$ of $C^0$ by exhibiting a provable example of each case.
The example curves $C$ were obtained by an extensive search that was made feasible by part (ii) of Theorem \ref{theorem: main result};
it is computationally much easier to approximate $z(C)$ than it is to explicitly compute $T(C)$, which requires computing the Galois groups of number fields of fairly large degree (48 or 96 in the most typical cases).

For an elliptic curve $E$ with CM, the values $a_1(E)(p)$ can be computed very quickly, and we show how to compute $a_1(C)(p)$ and $a_2(C)(p)$ from $a_1(E)(p)$, using the fact that $\Jac(C)$ is $\Qbar$-isogenous to $E^2$ (see Proposition \ref{proposition: trthetaMEA}).
This allows us to efficiently compute an approximation of $z(C)$ (using again Proposition \ref{proposition: trthetaMEA}) of precision sufficient to provisional identify $T(C)$ (via part (ii) of Theorem \ref{theorem: main result}).
Many curves were analyzed (tens of thousands) in order to obtain~41 candidate examples, one for each possible triple $T(C)$.
For each of these~41 candidates we then proved that the provisional identification of $T(C)$ is correct, by explicitly computing the Galois groups $\Gal(L/\Q)$, $\Gal(K/\Q)$, and $\Gal(L/M)$.

\subsection{Acknowledgements}
We thank Joan-C. Lario and Jordi Quer for helpful comments, and we are grateful to Kiran S. Kedlaya for indicating the way to prove Proposition \ref{proposition: equidistribution classes}.
Fit\'e thanks the Massachusetts Institute of Technology for its warm hospitality from September to December 2011, the period in which this project was realized. Fit\'e is also grateful to the University of Cambridge for its welcome in the period May-July 2011, when some of the ideas of this article originated.
Fit\'e  received financial support from DGICYT grant MTM2009-13060-C02-01 and from NSF grant DMS-0545904.
Sutherland received financial support from NSF
grant DMS-1115455.

\section{The twisting Sato-Tate group of a curve}\label{section: twisting group}

In this section we define the \emph{twisting Sato-Tate group}, which is our main object of study.
We do so in terms of the \emph{algebraic Sato-Tate group} defined by Banaszak and Kedlaya in \cite{BK}.
Let $A$ be an abelian variety of dimension $g\leq 3$ defined over a number field~$k$, and fix an embedding of $k$ into $\C$ .
Fix a polarization on~$A$ and a symplectic basis for the singular homology group $H_1(A_\C^{\topo},\Q)$.
Use it to equip this space with an action of $\GSp_{2g}(\Q)$. For each $\tau \in G_k$, define
\begin{equation}\label{equation: Lef}
\Lef(A,\tau) := \{\gamma \in \Sp_{2g}: \gamma^{-1} \alpha \gamma = {}^\tau\alpha \mbox{ for all $\alpha \in \End(A_\Qbar)\otimes{\Q}$}\}.
\end{equation}
Here we view $\alpha$ as an endomorphism of $H_1(A_\C^{\topo}, \Q)$.
The \emph{algebraic Sato-Tate group of $A$} is defined by
$$
\AST(A):=\bigcup_{\tau\in G_k} \Lef(A,\tau).
$$
The Sato-Tate group $\ST(A)$ is a maximal compact subgroup of $\AST(A)\otimes_\Q\C$; see \cite[Thm.~6.1, Thm.~6.10]{BK}.

\begin{remark}\label{remark: twist invariance}
As noted in the introduction, $\ST(A)$ is invariant under twisting when $g=1$.
This does not hold for $g>1$, however $\ST(A)$ \emph{is} invariant under quadratic twisting.
For $g\le 3$ this follows easily from the definitions above.
Indeed, let $\chi\colon G_k\rightarrow\C$ be a quadratic character. For every $\tau\in G_k$, one has $\Lef(A\otimes\chi,\tau)=\Lef(A,\tau)\otimes\chi(\tau)$ (see (\ref{equation: Twisted Lef}) for a more general relation).
Invariance under quadratic twisting follows from the fact that $\Lef(A,\tau)\otimes\chi(\tau) = \Lef(A,\tau)$.
For $A$ of arbitrary dimension, the invariance of $\ST(A)$ under quadratic twisting follows easily from the definition of $\ST(A)$ given in \cite{Se12} (see also \cite{FKRS}), in terms of the image of the $\ell$-adic representation attached to $A$.
\end{remark}

We now assume that $A$ is the Jacobian $\Jac(C)$ of a curve $C$ defined over $k$, and view $\Aut(C_\Qbar)$ as a subgroup of $\GL(H_1(\Jac(C)_{\C}^{\topo}, \Q))$.

\begin{definition} The \emph{twisting algebraic Sato-Tate group of $C$} is the algebraic subgroup of $\Sp_{2g}/\Q$ defined by
$$
\AST_{\Tw}(C):=\AST(\Jac(C))\cdot\Aut(C_\Qbar) \,.
$$
\end{definition}
Observe that $\AST_{\Tw}(C)$ is indeed a group: for any $\gamma_1,\gamma_2\in\AST(\Jac(C))$ and $\alpha_1,\alpha_2\in\Aut(C_\Qbar)$, we have
$$\gamma_1\alpha_1(\gamma_2\alpha_2)^{-1}=\gamma_1\gamma_2^{-1}\gamma_2[\alpha_1\alpha_2^{-1}]\gamma_2^{-1}=\gamma_1(\gamma_2^{-1}){}^{\tau_2^{-1}}(\alpha_1\alpha_2)\in \AST_{\Tw}(C)\,.$$

Now let $C'$ be a \emph{twist of $C$}, a curve defined over $k$ for which $C'_L\simeq C_L$ for some finite Galois extension $L/k$.
Let $\phi\colon C'_L\rightarrow C_L$ be a fixed isomorphism. It is easy to check that
\begin{equation}\label{equation: Twisted Lef}
\Lef(\Jac(C'),\tau)=\phi^{-1}\Lef(\Jac(C),\tau)({}^{\tau}\phi)\,.
\end{equation}
Here $\phi$ is seen as a homomorphism from $H_1(\Jac(C')_\C^{\topo}, \Q)$ to $H_1(\Jac(C)_\C^{\topo}, \Q)$.

\begin{lemma}\label{lemma: inclusion AST} Let $\gamma'\in\Lef(\Jac(C'),\tau)\subseteq \AST(\Jac(C'))$.
Write $\gamma'$ as $\phi^{-1}\gamma({}^{\tau}\phi)$ with $\gamma$ in $\Lef(\Jac(C),\tau)$, as in (\ref{equation: Twisted Lef}). The map
$$\Lambda_\phi\colon \AST(\Jac(C'))\rightarrow \AST_{\Tw}(C)\,,\qquad \Lambda_\phi(\gamma')=\gamma({}^\tau\phi)\phi^{-1}$$
is a (well-defined) monomorphism of groups.
\end{lemma}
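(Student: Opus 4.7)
My plan is to collapse the defining formula $\Lambda_\phi(\gamma')=\gamma({}^\tau\phi)\phi^{-1}$ to the much simpler $\tau$-free identity $\Lambda_\phi(\gamma')=\phi\gamma'\phi^{-1}$, from which every assertion of the lemma becomes transparent.

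First I would verify well-definedness. By (\ref{equation: Twisted Lef}) the decomposition $\gamma'=\phi^{-1}\gamma({}^\tau\phi)$ uniquely recovers $\gamma$ as $\gamma=\phi\gamma'({}^\tau\phi)^{-1}$, and this element lies in $\Lef(\Jac(C),\tau)$. Substituting back into the defining formula gives
$$
\Lambda_\phi(\gamma')=\phi\gamma'({}^\tau\phi)^{-1}\cdot({}^\tau\phi)\phi^{-1}=\phi\gamma'\phi^{-1},
$$
which is independent of the auxiliary choice of $\tau$. This disposes of any potential ambiguity when $\gamma'$ lies simultaneously in $\Lef(\Jac(C'),\tau_1)$ and $\Lef(\Jac(C'),\tau_2)$ for distinct $\tau_i$, so $\Lambda_\phi$ is a well-defined single-valued map on $\AST(\Jac(C'))=\bigcup_\tau \Lef(\Jac(C'),\tau)$.

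Next I would check that the image lies in $\AST_{\Tw}(C)$ by reading the defining formula as $\Lambda_\phi(\gamma')=\gamma\cdot({}^\tau\phi)\phi^{-1}$. The first factor $\gamma$ belongs to $\Lef(\Jac(C),\tau)\subseteq\AST(\Jac(C))$, while the second equals $\xi(\tau)^{-1}$, which lies in $\Aut(C_\Qbar)$ since both $\phi$ and ${}^\tau\phi$ are isomorphisms $C'_\Qbar\to C_\Qbar$. Hence $\Lambda_\phi(\gamma')\in\AST(\Jac(C))\cdot\Aut(C_\Qbar)=\AST_{\Tw}(C)$.

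Finally, the simplified form $\Lambda_\phi(\gamma')=\phi\gamma'\phi^{-1}$ trivialises the remaining assertions. For any $\gamma'_1,\gamma'_2\in\AST(\Jac(C'))$ we have
$$
\Lambda_\phi(\gamma'_1\gamma'_2)=\phi\gamma'_1\gamma'_2\phi^{-1}=(\phi\gamma'_1\phi^{-1})(\phi\gamma'_2\phi^{-1})=\Lambda_\phi(\gamma'_1)\Lambda_\phi(\gamma'_2),
$$
so $\Lambda_\phi$ is a homomorphism, and $\Lambda_\phi(\gamma')=1$ forces $\gamma'=1$ because $\phi$ is invertible, giving injectivity. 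The only real subtlety is the passage to the $\tau$-independent form, which is where equation (\ref{equation: Twisted Lef}) is essential; once that is in hand the rest is formal.
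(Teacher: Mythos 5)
Your proof is correct, and it is cleaner than the one in the paper. The key move — cancelling ${}^\tau\phi$ to rewrite $\Lambda_\phi(\gamma')=\gamma({}^\tau\phi)\phi^{-1}=\phi\gamma'({}^\tau\phi)^{-1}({}^\tau\phi)\phi^{-1}=\phi\gamma'\phi^{-1}$ — identifies $\Lambda_\phi$ as conjugation by $\phi$ inside $\GL(H_1(\Jac(C)_\C^{\topo},\Q))$, after which well-definedness (independence of $\tau$ and of the decomposition), the homomorphism property, and injectivity are all immediate. The paper instead keeps the $\tau$-dependent form throughout: it proves multiplicativity by a five-line computation that shuffles ${}^{\tau_2\tau_1}\phi$ and ${}^{\tau_2}\phi$ through the product $\gamma_1'\gamma_2'$, and dismisses well-definedness and injectivity as clear. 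Your route buys transparency at essentially no cost; the one place where the original factorization $\gamma\cdot({}^\tau\phi)\phi^{-1}$ is still genuinely needed is in checking that the image lands in $\AST_{\Tw}(C)=\AST(\Jac(C))\cdot\Aut(C_\Qbar)$ rather than merely in $\Sp_{2g}$, since $\phi\gamma'\phi^{-1}$ by itself does not exhibit the required product structure — and you do carry out exactly that check, observing that $({}^\tau\phi)\phi^{-1}=\xi(\tau)^{-1}\in\Aut(C_\Qbar)$. No gaps.
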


\begin{proof} Let $\gamma_1'=\phi^{-1}\gamma_1({}^{\tau_1}\phi)$ and $\gamma_2'=\phi^{-1}\gamma_2({}^{\tau_2}\phi)$ be elements of $\Lef(\Jac(C'),\tau_1)$ and $\Lef(\Jac(C'),\tau_2)$, respectively. Then

$$
\begin{array}{l@{\,=\,}l}
\Lambda_\phi(\gamma_1'\gamma_2') & \displaystyle{\Lambda_\phi(\phi^{-1}\gamma_1\gamma_2\gamma_2^{-1}[({}^{\tau_1}\phi)\phi^{-1}]\gamma_2({}^{\tau_2}\phi))}\\[6pt]
 & \displaystyle{\Lambda_\phi(\phi^{-1}\gamma_1\gamma_2({}^{\tau_2\tau_1}\phi)({}^{\tau_2}\phi)^{-1}({}^{\tau_2}\phi))}\\[6pt]
 & \displaystyle{\Lambda_\phi(\phi^{-1}\gamma_1\gamma_2({}^{\tau_2\tau_1}\phi))=\gamma_1\gamma_2({}^{\tau_2\tau_1}\phi)\phi^{-1}}\\[6pt]
 & \displaystyle{\gamma_1\gamma_2[({}^{\tau_2\tau_1}\phi)({}^{\tau_2}\phi)^{-1}]\gamma_2^{-1}\gamma_2({}^{\tau_2}\phi)\phi^{-1}}\\[6pt]
 & \displaystyle{\gamma_1({}^{\tau_1}\phi)\phi^{-1}\gamma_2({}^{\tau_2}\phi)\phi^{-1}=\Lambda_\phi(\gamma_1')\Lambda_\phi(\gamma_2')\,.}\\[6pt]
\end{array}
$$
It is clear that $\Lambda_\phi$ is both well defined and injective: $\Lambda_\phi(\gamma'_1)=\Lambda_\phi(\gamma'_2)$ if and only if $\gamma_1'=\gamma_2'$.
\end{proof}

We now define the \emph{twisting Sato-Tate group} $\ST_{\Tw}(C)$ of $C$.

\begin{definition} The \emph{twisting Sato-Tate group $\ST_{\Tw}(C)$ of $C$} is a maximal compact subgroup of $\AST_{\Tw}(C)\otimes \C$.
\end{definition}

\begin{remark} It follows from the previous lemma that for any twist $C'$ of $C$, the Sato-Tate group $\ST(\Jac(C))$ is isomorphic to a subgroup of $\ST_{\Tw}(C)$. We also note that the component groups of $\ST_{\Tw}(C)$ and $\AST_{\Tw}(C)\otimes \C$ must be isomorphic, and the identify components of $\ST_{\Tw}(C)$ and $\ST(\Jac(C))$ are equal.
\end{remark}

Our next goal is to study the component group of $\ST_{\Tw}(C)$ when $C$ is a hyperelliptic curve (of genus $g\leq 3$). Consider the group\footnote{The product of elements $(\alpha_1,\gamma_1)$ and $(\alpha_2,\gamma_2)$ in $\Aut(C_\Qbar)\rtimes \AST(\Jac(C))$ is defined to be $(\alpha_2\gamma_2^{-1}\alpha_1\gamma_2,\gamma_1\gamma_2)= (\alpha_2\cdot{}^{\tau_2}\alpha_1,\gamma_1\gamma_2)$, where $\gamma_2\in \Lef(A,\tau_2)\subset\AST(\Jac(C))$.}
$$\bigslant{\Aut(C_\Qbar)\rtimes \AST(\Jac(C))}{Z}\,,$$
where $Z$ is the normal subgroup of $\Aut(C_\Qbar)\rtimes \AST(\Jac(C))$ consisting of the pairs $(\alpha,\gamma)$ with $\alpha=\gamma$, where $\alpha\in\Aut(C_\Qbar)$ and $\gamma\in\AST(\Jac(C))$.

\begin{lemma}\label{lemma: isomorphism Phi} The map
$$
\Phi\colon \AST_{\Tw}(C)\rightarrow\bigslant{\Aut(C_\Qbar)\rtimes \AST(\Jac(C))}{Z}\,,\qquad \Phi(\gamma\alpha)=(\alpha^{-1},\gamma)
$$
is a (well-defined) isomorphism.
\end{lemma}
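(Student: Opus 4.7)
The plan is to avoid verifying well-definedness of $\Phi$ directly on $\AST_{\Tw}(C)$, where one would have to reconcile different factorizations $\gamma\alpha$ of the same element. Instead, I would construct the map $\Psi$ in the opposite direction and show its kernel is $Z$; the inverse of the induced isomorphism will then be $\Phi$.

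Concretely, define
$$
\Psi\colon \Aut(C_\Qbar)\rtimes \AST(\Jac(C))\longrightarrow \AST_{\Tw}(C),\qquad \Psi(\alpha,\gamma):=\gamma\alpha^{-1}.
$$
The image visibly lies in $\AST(\Jac(C))\cdot\Aut(C_\Qbar)=\AST_{\Tw}(C)$, so $\Psi$ is a well-defined map of sets. To see it is a group homomorphism, I would use the defining relation $\gamma^{-1}\alpha\gamma={}^{\tau}\alpha$ for $\gamma\in\Lef(\Jac(C),\tau)$ coming from (\ref{equation: Lef}) together with the semidirect-product multiplication recalled in the footnote, and compute
$$
\Psi\bigl((\alpha_1,\gamma_1)(\alpha_2,\gamma_2)\bigr)=\Psi(\alpha_2\cdot {}^{\tau_2}\alpha_1,\,\gamma_1\gamma_2)=\gamma_1\gamma_2\,({}^{\tau_2}\alpha_1)^{-1}\alpha_2^{-1}=\gamma_1\alpha_1^{-1}\gamma_2\alpha_2^{-1},
$$
which is precisely $\Psi(\alpha_1,\gamma_1)\Psi(\alpha_2,\gamma_2)$. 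Surjectivity is immediate, since every element of $\AST_{\Tw}(C)$ is of the form $\gamma\alpha=\Psi(\alpha^{-1},\gamma)$.

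For the kernel, $\Psi(\alpha,\gamma)=1$ in $\Sp_{2g}$ forces $\gamma=\alpha$, which is exactly the defining condition for $Z$. Hence $\Psi$ descends to an isomorphism
$$
\overline{\Psi}\colon \bigslant{\Aut(C_\Qbar)\rtimes \AST(\Jac(C))}{Z}\longrightarrow \AST_{\Tw}(C).
$$
Finally, $\Phi$ is the set-theoretic inverse of $\overline{\Psi}$, since $\Psi(\alpha^{-1},\gamma)=\gamma\alpha$ and $\Phi(\gamma\alpha)=(\alpha^{-1},\gamma)$ by definition; this simultaneously yields well-definedness of $\Phi$ (different representations $\gamma\alpha=\gamma'\alpha'$ land in the same coset modulo $Z$) and the fact that it is a group isomorphism.

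The only step that requires genuine attention is the homomorphism verification for $\Psi$, where one must correctly combine the semidirect-product convention from the footnote with the conjugation identity in $\Lef(\Jac(C),\tau)$; the remaining verifications (surjectivity, kernel, inversion) are essentially formal.
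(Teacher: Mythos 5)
Your argument is correct, and it reorganizes the paper's proof in a slightly different (and arguably cleaner) way. The paper verifies directly that $\Phi$ is multiplicative on products $\gamma_1\alpha_1\gamma_2\alpha_2$ and then checks well-definedness and injectivity in a separate step, by showing that $\Phi(\gamma_1\alpha_1)=\Phi(\gamma_2\alpha_2)$ holds precisely when $\gamma_1\alpha_1=\gamma_2\alpha_2$, using the explicit description of $Z$; note that it simply asserts that $Z$ is normal. You instead run the map backwards: $\Psi(\alpha,\gamma)=\gamma\alpha^{-1}$ is unambiguously defined on the semidirect product, your homomorphism computation (which is the same core manipulation as the paper's, namely replacing $({}^{\tau_2}\alpha_1)^{-1}$ by $\gamma_2^{-1}\alpha_1^{-1}\gamma_2$ via the defining relation of $\Lef(\Jac(C),\tau_2)$) is correct under the footnote's multiplication convention, and the first isomorphism theorem then hands you everything at once. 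What your route buys is that normality of $Z$ comes for free as a kernel rather than being taken on faith, and the well-definedness of $\Phi$ on ambiguous factorizations $\gamma\alpha=\gamma'\alpha'$ is absorbed into the formal inversion of $\overline{\Psi}$; what the paper's route buys is that it never has to introduce an auxiliary map. Both hinge on the same identity, so the difference is one of packaging rather than of substance.
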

\begin{proof} For any $\gamma_1,\gamma_2\in\AST(\Jac(C))$ and $\alpha_1,\alpha_2\in\Aut(C_\Qbar)$, we have
$$
\begin{array}{l@{\,=\,}l}
\Phi(\gamma_1\alpha_1\gamma_2\alpha_2) & \displaystyle{\Phi(\gamma_1\gamma_2({}^{\tau_2}\alpha_1)\alpha_2)}=(\alpha_2^{-1}({}^{\tau_2}\alpha_1)^{-1},\gamma_1\gamma_2)\\[6pt]
 & \displaystyle{(\alpha_1^{-1},\gamma_1)(\alpha_2^{-1},\gamma_2)=\Phi(\gamma_1\alpha_1)\Phi(\alpha_2\gamma_2)\,.}\\[6pt]
\end{array}
$$
The surjectivity of $\Phi$ is clear. It remains to prove that $\Phi(\gamma_1\alpha_1)=\Phi(\gamma_2\alpha_2)$ if and only if $\gamma_1\alpha_1=\gamma_2\alpha_2$. On the one hand, $\Phi(\gamma_1\alpha_1)=\Phi(\gamma_2\alpha_2)$ if and only if
$$Z\ni(\alpha_2^{-1},\gamma_2)(\alpha_1^{-1},\gamma_1)^{-1}=({}^{\tau_1^{-1}}(\alpha_1\alpha_2^{-1}),\gamma_2\gamma_1^{-1})\,.$$
On the other hand, $\gamma_1\alpha_1=\gamma_2\alpha_2$ if and only if $\alpha_1\alpha_2^{-1}=\gamma_1^{-1}\gamma_2$, equivalently,  ${}^{\tau_1^{-1}}(\alpha_1\alpha_2^{-1})=\gamma_2\gamma_1^{-1}$.  But then $({}^{\tau_1^{-1}}(\alpha_1\alpha_2^{-1}),\gamma_2\gamma_1^{-1})\in Z\,$.
\end{proof}

We now assume $C$ is a hyperelliptic curve (of genus $g\leq 3$). As an endomorphism of $H_1(\Jac(C)_\C^{\topo}, \Q)$, the hyperelliptic involution $w$ of $C$ corresponds to the matrix $-1\in\Sp_{2g}(\Q)$. Recall that $\AST(\Jac(C))$ contains the matrix $-1$. Thus $(-1,-1)\in Z$ and Lemma \ref{lemma: isomorphism Phi} implies that $\AST_{\Tw}(C)$ is isomorphic to a subgroup of
$$
 \bigslant{\Aut(C_\Qbar)\rtimes \AST_{\Jac(C)}}{\langle (-1,-1)\rangle}\,.
$$

Let $K/k$ denote the minimal field extension over which all the endomorphisms of $\Jac(C)$ are defined. Then, since the component group of $\ST(\Jac(C))$ is isomorphic to $\Gal(K/k)$ (see \cite[Rem.~6.4, Thm.~6.10]{BK}), and the identity component of $\ST(\Jac(C))$ contains the matrix $-1$, the component group of $\ST_{\Tw}(C)$ is isomorphic to a subgroup of
$$
\bigslant{\Aut(C_\Qbar)}{\langle w\rangle}\rtimes \Gal(K/k)\,.
$$

By lemma \ref{lemma: inclusion AST}, for any twist $C'$ of $C$, there exists a monomorphism of groups
\begin{equation}\label{remark: injection}
\overline\lambda_\phi\colon\Gal(K/k)\rightarrow \bigslant{\Aut(C_\Qbar)}{\langle w\rangle}\rtimes \Gal(K/k)\,.
\end{equation}
It follows that if there exists a twist $\tilde C$ of $C$ such that
\begin{equation}\label{equation: orders}
|\Gal(\tilde K/k)|=|\Aut(C_\Qbar)|\cdot|\Gal(K/k)|/2\,,
\end{equation}
where $\tilde K/k$ is the minimal extension over which all the endomorphisms of $\Jac(\tilde C)$ are defined, then $\ST_{\Tw}(C)=\ST(\Jac(\tilde C))$, and for every twist $C'$ of $C$, the Sato-Tate group $\ST(\Jac(C'))$ is a subgroup of $\ST(\Jac(\tilde C))$.

\begin{remark} Let $C^0_2$ and $C^0_3$ be the two curves defined in \S\ref{section: 2 moduli points}. If $\tilde C$ is a twist of $C^0_2$ (resp.\ $C^0_3$) such that $\ST(\tilde C)=J(O)$ (resp.\ $J(D_6)$), then equation (\ref{equation: orders}) is satisfied.
It follows that $\ST_{\Tw}(C^0_2)=J(O)$ and $\ST_{\Tw}(C^0_2)=J(D_6)$.
\end{remark}

\section{Squares of CM elliptic curves}\label{section: squares}

We shall work in the category of abelian varieties up to isogeny, so we call the elements of $\Hom(A,B)\otimes \Q$ homomorphisms, the elements of $\End(A)\otimes\Q$ endomorphisms, and the surjective elements in $\Hom(A,B)\otimes\Q$ isogenies.

We henceforth assume that $A$ is an abelian variety over $k$ such that $A_\Qbar\sim E_\Qbar^2$, where $E$ is an elliptic curve defined over $k$ with complex multiplication (CM) by an imaginary quadratic field $M$ (except in \S\ref{subsection: additional}, where we do not assume $E$ has CM).
Let $L/k$ be the minimal extension over which all the homomorphisms from $E_\Qbar$  to $A_\Qbar$ are defined, and let $K/k$  be the minimal extension over which all the endomorphisms of $A_\Qbar$ are defined.
We note that $kM\subseteq K\subseteq L$, and we have $\Hom(E_\Qbar,A_\Qbar)\simeq\Hom(E_{L} ,A_{L} )$ and $A_{L} \sim E_{L} ^2$.

\subsection{The Galois modules \texorpdfstring{$\Hom(E_L,A_L)$}{} and \texorpdfstring{$\End(A_L)$}{}}\label{section: representations general}\label{section: Galois modules}

Let $\sigma$ and $\overline\sigma$ denote the two embeddings of $M$ into $\Qbar$.
Consider
$$
\Hom(E_{L} ,A_{L} )\otimes_{M,\sigma}\Qbar \qquad \text{(resp.\ $\End(A_{L} )\otimes_{M,\sigma}\Qbar$)}\,,
$$
where the tensor product is taken via the embedding $\sigma\colon M\hookrightarrow\Qbar$. Letting $\Gal(L/kM)$ act trivially on $\Qbar$ and naturally on $\Hom(E_{L} ,A_{L} )$, it acquires the structure of a $\overline \Q[\Gal(L/kM)]$-module of dimension 2 (resp.~4) over~$\Qbar$, and similarly for $\overline\sigma$.

\begin{definition}\label{definition: thetaMsigma}
Let $\theta:=\theta_{M,\sigma}(E,A)$ (resp.\ $\theta_{M,\sigma}(A)$) denote the representation afforded by the module $\Hom(E_{L} ,A_{L} )\otimes_{M,\sigma}\Qbar$ (resp.\ $\End(A_{L} )\otimes_{M,\sigma}\Qbar$), and similarly define $\overline\theta := \theta_{M,\overline\sigma}(E,A)$ and $\theta_{M,\overline\sigma}(A)$.
Let $\theta_\Q:=\theta_\Q(E,A)$ (resp.\ $\theta_\Q(A)$) denote the representation afforded by the $\Q[\Gal({L} /k)]$-module $\Hom(E_{L},A_{L})\otimes\Q$ (resp.\ $\End(A_L)\otimes\Q$).
\end{definition}
For each $\tau \in \Gal({L} /kM)$ we write
$$
\det(1-\theta(\tau)T)=1+a_1(\theta)(\tau)T+a_2(\theta)(\tau)T^2\,,
$$
where $a_1(\theta)=\Tr\theta$ and $a_2(\theta)=\det(\theta)$ are elements of $M$.
Observe that 
\begin{equation}\label{equation: trace}
\Tr\theta_\Q(\tau)=\Tr_{M/\Q}\Tr\theta(\tau)\qquad \text{if $\tau\in\Gal(L/kM)$.}
\end{equation}
For $z\in M$ let $|z|:=\sqrt{\sigma(z)\overline\sigma(z)}$.

\begin{proposition}\label{proposition:norm} There is an isomorphism of $\Qbar[\Gal(L/kM)]$-modules
$$
\End(A_{L} )\otimes_{M,\sigma}\Qbar\simeq \left(\Hom(E_{L} ,A_{L} )\otimes_{M,\sigma}\Qbar\right)^*\otimes \Hom(E_{L} ,A_{L} )\otimes_{M,\sigma}\Qbar\,.
$$
Thus $\Tr\theta_{M,\sigma}(A)=\Tr\theta_{M,\overline\sigma}(E,A)\cdot\Tr\theta_{M,\sigma}(E,A)=|\Tr(\theta)|^2\in\Q$,
and therefore $\theta_{M,\sigma}(A)\simeq\theta_{M,\overline\sigma}(A)$.
\end{proposition}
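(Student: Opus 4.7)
The plan is to realize the module isomorphism via the Morita equivalence induced by $A_L\sim E_L^2$, and then extract the trace identity by exploiting the fact that $\Gal(L/kM)$ is finite, which is what lets the $\Qbar$-linear dual be reinterpreted in terms of the $\bar\sigma$-extension.

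I set $V:=\Hom(E_L,A_L)\otimes\Q$, which is a free $M$-module of rank~$2$ because $A_L\sim E_L^2$ and $\End(E_L)\otimes\Q\simeq M$; by definition of $L$, the group $\Gal(L/kM)$ acts $M$-linearly on $V$. Post-composition $\alpha\mapsto(v\mapsto\alpha\circ v)$ gives a map $\End(A_L)\otimes\Q\to\End_M(V)$ which is an isomorphism of $M$-algebras (standard Morita, using $A_L\sim E_L^2$) and is Galois-equivariant (Galois commutes with composition). Combining this with the canonical isomorphism $\End_M(V)\simeq V\otimes_M V^\vee$, where $V^\vee:=\Hom_M(V,M)$, and with the identification $(V^\vee)\otimes_{M,\sigma}\Qbar\simeq(V\otimes_{M,\sigma}\Qbar)^*$ valid for finitely generated projective $V$, tensoring through by $\Qbar$ via $\sigma$ yields the stated isomorphism of $\Qbar[\Gal(L/kM)]$-modules.

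For the trace identity, first note that $L/k$ is a finite Galois extension (since $\Hom(E_\Qbar,A_\Qbar)$ is a finitely generated $\Z$-module), so $\Gal(L/kM)$ is finite and every $\tau$ acts on $V_\sigma:=V\otimes_{M,\sigma}\Qbar$ with eigenvalues that are roots of unity. Fix an embedding $\Qbar\hookrightarrow\C$ under which $\bar\sigma$ becomes the complex conjugate of $\sigma$. Then
\[
\Tr(\tau\mid V_\sigma^*)=\Tr(\tau^{-1}\mid V_\sigma)=\overline{\Tr(\tau\mid V_\sigma)}=\overline{\sigma(a_1(\theta)(\tau))}=\bar\sigma(a_1(\theta)(\tau))=\Tr(\tau\mid V_{\bar\sigma}),
\]
where the second equality uses that the eigenvalues $\lambda_i$ are roots of unity, so $\overline{\lambda_i}=\lambda_i^{-1}$. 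Multiplying by $\Tr(\tau\mid V_\sigma)$ gives
\[
\Tr\theta_{M,\sigma}(A)(\tau)=\sigma(a_1(\theta)(\tau))\cdot\bar\sigma(a_1(\theta)(\tau))=N_{M/\Q}(a_1(\theta)(\tau))=|\Tr\theta(\tau)|^2\in\Q.
\]

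Finally, to conclude $\theta_{M,\sigma}(A)\simeq\theta_{M,\bar\sigma}(A)$: both are representations of the finite group $\Gal(L/kM)$ over the characteristic-zero field $\Qbar$, hence semisimple and determined by their characters, and these characters agree since $|\Tr\theta|^2$ is visibly symmetric in $\sigma$ and $\bar\sigma$. I expect the main obstacle to be careful bookkeeping of left/right $M$-actions in the Morita step and verification of Galois equivariance at each stage; the other nontrivial point is the passage from $V_\sigma^*$ to $V_{\bar\sigma}$, which is not formal algebra but relies essentially on the finiteness of $\Gal(L/kM)$.
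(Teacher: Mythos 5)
Your proposal is correct and follows essentially the same route as the paper: both realize $\End(A_L)\otimes_{M,\sigma}\Qbar$ as the endomorphism algebra of $\Hom(E_L,A_L)\otimes_{M,\sigma}\Qbar$ via post-composition, the paper verifying the isomorphism by a dimension count after tensoring with $\Qbar$ and you by Morita equivalence over $M$ before tensoring, which amounts to the same thing. The only place you go beyond the paper is in spelling out why the trace of $\tau$ on the dual of $\Hom(E_L,A_L)\otimes_{M,\sigma}\Qbar$ equals $\overline\sigma(\Tr\theta(\tau))$ (via roots of unity and the identification of $\overline\sigma$ with complex conjugation), a step the paper leaves implicit when passing from the module isomorphism to the identity $\Tr\theta_{M,\sigma}(A)=|\Tr(\theta)|^2$.
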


\begin{proof} Consider the natural inclusion of $\Qbar[\Gal(L/kM)]$-modules
$$
\End(A_{L} )\otimes_{M,\sigma}\Qbar \hookrightarrow \Hom_\Qbar(\Hom(E_{L} ,A_{L} )\otimes_{M,\sigma}\Qbar ,\Hom(E_{L} ,A_{L} )\otimes_{M,\sigma}\Qbar )\,,
$$
which sends an element $\psi$ in $\End(A_{L} )\otimes_{M,\sigma}\Qbar$ to the linear map of $\Qbar$-vector spaces that sends $f$ in $\Hom(E_{L} ,A_{L} )\otimes_{M,\sigma}\Qbar$ to $\psi\circ f$ in $\Hom(E_{L} ,A_{L} )\otimes_{M,\sigma}\Qbar$.
Both spaces have dimension $4$ over $\Qbar$, and therefore must be isomorphic as $\Qbar[\Gal(L/kM)]$-modules.
\end{proof}

Let $\pi\colon \Gal({L} /kM)\rightarrow \Gal(K/kM)$ be the natural projection.
For each $\tau$ in $\Gal({L} /kM)$, let $s=s(\tau)$ denote the order of $\tau$ and let $r=r(\tau)$ denote the order of $\pi(\tau)$ in $\Gal(K/kM)$.
The possible values of $r$ are $1$, $2$, $3$, $4$, and $6$;  see \cite[\S4.5]{FKRS}.

\begin{proposition}\label{proposition:trthetaQ}
Suppose $\tau\in\Gal(L/k)$ does not lie in $\Gal(L/kM)$.  Then the eigenvalues of $\theta_\Q(E,A)(\tau)$ are as follows:
\begin{center}
\begin{tabular}{lllll}
$s=2\colon$ & $-1,-1,1,1$            & $\quad$ & $s=8\colon$ &  $\zeta_8,\zeta_8^3,\zeta_8^5,\zeta_8^7$\vspace{2pt}\\
$s=4\colon$ & $i,i,-i,-i$& $\quad$  & $s=12\colon$ & $\zeta_{12},\zeta_{12}^5,\zeta_{12}^7,\zeta_{12}^{11}$\vspace{2pt}\\
$s=6\colon$ &  $\zeta_3,\zeta_3^2,\zeta_6,\zeta_6^5$
\end{tabular}
\end{center}
Here, $\zeta_r$ stands for an $r$th root of unity.
\end{proposition}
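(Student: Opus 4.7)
The plan is to exploit the $M$-semilinear action of $\tau$ on $V:=\Hom(E_L,A_L)\otimes\Q$ and to pin down $\theta(\tau^2)$ via two numerical invariants: its order and its determinant. Since $\tau\notin\Gal(L/kM)$, the restriction $\tau|_M$ is the nontrivial element of $\Gal(M/\Q)$, so $\tau$ acts $\Q$-linearly and $M$-antilinearly on $V$. I would fix an $M$-basis $e_1,e_2$ of $V$ and write $\tau(e_i)=\sum_j a_{ij}e_j$ with $A:=(a_{ij})\in M_2(M)$. Passing to $V\otimes_\Q\Qbar=V_\sigma\oplus V_{\bar\sigma}$ (where $V_\sigma:=\Hom(E_L,A_L)\otimes_{M,\sigma}\Qbar$), the matrix of $\tau$ in the corresponding basis becomes block anti-diagonal with blocks involving $A_\sigma$ and $A_{\bar\sigma}$. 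A Schur-complement computation gives
$$\det(\lambda I-\theta_\Q(\tau))=\det(\lambda^2 I-\theta(\tau^2)),$$
so the four eigenvalues of $\theta_\Q(\tau)$ are $\pm\sqrt{\mu_1},\pm\sqrt{\mu_2}$, with $\mu_1,\mu_2$ the two eigenvalues of $\theta(\tau^2)$.

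Next I would pin down the characteristic polynomial of $\theta(\tau^2)$ through two facts. First, since $L$ is minimal for the definition of $\Hom(E_\Qbar,A_\Qbar)$, the representation $\theta_\Q$ is faithful; and since $\tau|_M$ has order $2$, the integer $s$ is automatically even, so $\theta(\tau^2)$ has order exactly $s/2$. Second, a direct computation using $\tau(m\cdot e_i)=\bar m\cdot\tau(e_i)$ shows that $\tau^2$ acts $M$-linearly on $V$ with matrix $\bar{A}\cdot A$, whence
$$\det\theta(\tau^2)=\det(\bar A)\det(A)=N_{M/\Q}(\det A)\in\Q_{>0}.$$
On the other hand, $|\det\theta(\tau^2)|^2=\det\theta_\Q(\tau^2)=(\det\theta_\Q(\tau))^2$, and $\det\theta_\Q(\tau)\in\Q$ is an $s$-th root of unity, hence $\pm 1$. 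Combining the two, $\det\theta(\tau^2)=1$.

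Having order $s/2$ and determinant $1$ forces the eigenvalue multiset of $\theta(\tau^2)$ to be $\{\zeta,\zeta^{-1}\}$ for a primitive $(s/2)$-th root of unity $\zeta$. Taking $\pm$-square roots of each eigenvalue and simplifying (e.g.\ $\pm\sqrt{\zeta_6}=\zeta_{12},\zeta_{12}^7$) yields, case by case for $s\in\{2,4,6,8,12\}$, the four eigenvalues listed in the proposition.

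The main obstacle is establishing $\det\theta(\tau^2)=1$: without it, the case $s=4$ a priori admits $\theta(\tau^2)=-I$ (correctly giving $\{i,i,-i,-i\}$) as well as $\theta(\tau^2)$ conjugate to $\mathrm{diag}(1,-1)$ (incorrectly giving $\{1,-1,i,-i\}$). The positivity of $N_{M/\Q}(\det A)$ coming from the $M$-antilinear structure, together with the $\pm 1$-rationality of $\det\theta_\Q(\tau)$, is precisely what excludes the latter.
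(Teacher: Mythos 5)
Your proof is correct, but it takes a genuinely different route from the paper's. The paper first establishes two facts: $\theta_\Q(E,A)$ is faithful, and $\Tr\theta_\Q(E,A)(\tau)=0$ (the latter deduced from $E\otimes\chi\sim_k E$ for the quadratic character $\chi$ of $kM/k$, via \cite[Prop.~1.6]{MRS07}); these alone pin down the eigenvalues for $s=2,6,8,12$ by inspecting which degree-$4$ products of cyclotomic polynomials are consistent. The remaining ambiguity at $s=4$ (both $i,-i,1,-1$ and $i,-i,i,-i$ have trace zero) is resolved arithmetically, by realizing the eigenvalues as quotients of roots of $\overline L_\p(E,T)=1+T^2$ and of the five possible polynomials $\overline L_\p(A,T)$ at a degree-one prime inert in $kM$. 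Your argument instead works directly with the $M$-antilinearity of $\tau$: the block anti-diagonal shape yields the trace-zero property for free, and the reduction to $\theta(\tau^2)$ combined with $\det\theta(\tau^2)=N_{M/\Q}(\det A)>0$ (positivity of the norm form of the \emph{imaginary} quadratic field $M$) and the rationality of $\det\theta_\Q(\tau)$ replaces the Euler-factor argument by a uniform algebraic one; it also makes transparent exactly where the imaginary-quadratic hypothesis enters. One step you should make explicit: concluding that $\theta(\tau^2)$ has order exactly $s/2$ requires faithfulness of $\theta=\theta_{M,\sigma}$ on $\Gal(L/kM)$, not merely of $\theta_\Q$; this does follow, because in an $M$-basis $\theta(\rho)$ and $\overline\theta(\rho)$ are obtained by applying $\sigma$ and $\overline\sigma$ to the same matrix over $M$, so $\theta(\rho)=1$ forces $\theta_\Q(\rho)=1$ and hence $\rho=1$.
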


\begin{proof}
We can assume that $kM/k$ is quadratic, otherwise there is nothing to prove. We first show the following properties of $\theta_\Q(E,A)$:
\begin{enumerate}[(\rm i)]
\item The least common multiple of the orders of the eigenvalues of $\theta_\Q(E,A)(\tau)$ is equal to $s$.
\item If $\tau\in\Gal(L/k)\setminus \Gal(L/kM)$ then $\Tr\theta_\Q(E,A)(\tau)=0$.
\end{enumerate}
It follows from the definition of $L/k$ that the representation $\theta_\Q(E,A)$ is faithful, which implies (i).
Let $\chi$ be the quadratic character of $\Gal(L/k)$ associated to the quadratic extension $kM/k$.
Then $E\otimes \chi\sim_k E$ (and, in fact, $A\otimes \chi\sim_k A$), which implies that $\Hom(E_{L},A_{L})=\Hom(E_{L},A_{L})\otimes \chi$ (by \cite[Prop.~1.6]{MRS07}, for example). This proves (ii). 

For $s=2,6,8,12$, the proposition follows from (i) and (ii).
For $s=4$, (i) implies that $i$ is an eigenvalue of $\theta_\Q(E,A)(\tau)$ and (ii) leaves just two possibilities for the four eigenvalues: $i, -i, 1, -1$, or $i, -i, i, -i$.
We now show that only the latter can arise.
The eigenvalues of $\theta_\Q(E,A)(\tau)$ are quotients of roots of $\overline L_{\p}(E,T)$ and roots of $\overline L_{\p}(A,T)$, where $\p$ is a prime of $k$, inert in $kM$, of good reduction for $A$ and $E$. We can further assume that $\p$ has absolute degree 1.
Then $\overline L_\p(E,T)=1+T^2$, and the polynomial $\overline L_{\p}(A,T)$ is one of the following:
\begin{equation}\label{equation: five possibilities}
(1-T^2)^2,\quad 1-T^2+T^4,\quad 1+T^4,\quad 1+T^2+T^4,\quad (1+T^2)^2\,.
\end{equation}
In no case can both 1 and $i$ arise as quotients of a root of $\overline L_\p(E,T)=1+T^2$ and roots of $\overline L_{\p}(A,T)$.
\end{proof}

In view of Proposition~\ref{proposition:norm}, we write $\theta_{M}(A)$ for $\theta_{M,\sigma}(A)\simeq\theta_{M,\overline\sigma}(A)$.

\begin{proposition}\label{proposition:trtheta} For each $\tau\in\Gal(L/kM)$ we have
$$
\Tr\theta_M(A)(\tau)=2+\zeta_{r}+\overline \zeta_{r}\,.
$$
\end{proposition}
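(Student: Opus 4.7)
The starting point is Proposition~\ref{proposition:norm}, which identifies $\theta_M(A)$ with $\theta^*\otimes\theta$ as $\Qbar[\Gal(L/kM)]$-modules, where $\theta=\theta_{M,\sigma}(E,A)$ is 2-dimensional. Writing $a,b\in\Qbar^\times$ for the eigenvalues of $\theta(\tau)$, the four eigenvalues of $(\theta^*\otimes\theta)(\tau)$ are $\{1,1,a/b,b/a\}$, so that
$$\Tr\theta_M(A)(\tau)=2+a/b+b/a.$$
The proof therefore reduces to showing that $a/b$ is a primitive $r$-th root of unity.

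To compute $\theta(\tau)$ concretely, I would fix an $L$-isogeny $\psi\colon A_L\to E_L^2$ and use it to identify $\End(A_L)\otimes\Q\simeq M_2(M)$ and $\Hom(E_L,A_L)\otimes\Q\simeq M^2$ (the standard left $M_2(M)$-module), taking advantage of $M\subseteq K\subseteq L$ so that $\End(E_L)\otimes\Q=M$. Under these identifications, the Galois action of $\tau\in\Gal(L/kM)$ on $\End(A_L)\otimes\Q$ is an $M$-algebra automorphism of $M_2(M)$, hence inner, giving a homomorphism $c\colon\Gal(L/kM)\to\PGL_2(M)$. Compatibility of the Galois actions on $\End$ and $\Hom$ forces the action on $M^2$ to be by some lift $\tilde c_\tau\in\GL_2(M)$ of $c_\tau$ (unique up to a scalar in $M^\times$), and $\theta(\tau)$ is the image of $\tilde c_\tau$ in $\GL_2(\Qbar)$ under $\sigma$.

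The last step is to identify the order of $c_\tau$ in $\PGL_2(M)$. By the very definition of $K$, an element of $\Gal(L/kM)$ lies in $\ker(c)$ precisely when it fixes $\End(A_L)\otimes\Q$ pointwise, i.e., when it lies in $\Gal(L/K)$; thus $c$ descends to an injection $\Gal(K/kM)\hookrightarrow\PGL_2(M)$, and $c_\tau$ has order exactly $r$. Since some power of $\tilde c_\tau$ is a nonzero scalar (a Jordan block has no scalar power), $\tilde c_\tau$ must be diagonalizable over $\Qbar$, and its eigenvalues $a_0,b_0$ satisfy $(a_0/b_0)^r=1$ with no smaller power equal to $1$; hence $a_0/b_0$ is a primitive $r$-th root of unity. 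Applying $\sigma$ preserves the order, so $a/b=\zeta_r$ is primitive of order $r$ and $\Tr\theta_M(A)(\tau)=2+\zeta_r+\overline\zeta_r$. The main technical point is the injectivity $\Gal(K/kM)\hookrightarrow\PGL_2(M)$, but this is essentially tautological from the minimality in the definition of $K$.
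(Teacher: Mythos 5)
Your proof is correct, but it takes a genuinely different route from the paper's. The paper's proof is a two-line deduction from external input: it quotes \cite[Prop.~9]{FKRS} for the eigenvalue multiset $1,1,1,1,\zeta_r,\zeta_r,\overline\zeta_r,\overline\zeta_r$ of $\theta_\Q(A)(\tau)$, splits this in half using $\theta_\Q(A)\otimes\Qbar\simeq\theta_{M,\sigma}(A)\oplus\theta_{M,\overline\sigma}(A)$ together with $\theta_{M,\sigma}(A)\simeq\theta_{M,\overline\sigma}(A)$, and then uses the rationality of $\Tr\theta_M(A)$ (from Proposition \ref{proposition:norm}) to exclude the options $1,1,\zeta_r,\zeta_r$ and $1,1,\overline\zeta_r,\overline\zeta_r$. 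You instead in effect reprove the relevant case of \cite[Prop.~9]{FKRS}: you use only the tensor decomposition $\theta_M(A)\simeq\theta^*\otimes\theta$ from Proposition \ref{proposition:norm}, realize the $\Gal(L/kM)$-action on $\End(A_L)\otimes\Q\simeq\M_2(M)$ as inner automorphisms via Skolem--Noether, and read off that the eigenvalue ratio of $\theta(\tau)$ has exact order $r$ from the injectivity of $\Gal(K/kM)\hookrightarrow\PGL_2(M)$, which is indeed just the minimality of $K$. Your argument is more self-contained and yields slightly more (it pins down $\theta(\tau)$ up to a scalar, anticipating the explicit computation of $\theta_{M,\sigma}(E^0,\Jac(C))$ in \S\ref{section: representations twists}), at the cost of more setup. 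One step worth making explicit: the claim that $\tau\in\Gal(L/kM)$ acts by an $M$-algebra (rather than merely $\Q$-algebra) automorphism of $\End(A_L)\otimes\Q$, i.e.\ fixes the center pointwise. This is true, and follows for instance from the identity $\mu\circ f=f\circ\iota(\mu)$ for $\mu$ in the center and $f\in\Hom(E_L,A_L)\otimes\Q$, combined with the triviality of the $\Gal(L/kM)$-action on $\End(E_L)\otimes\Q=M$ and the faithfulness of the left $\End(A_L)\otimes\Q$-action on $\Hom(E_L,A_L)\otimes\Q$; but it is exactly what licenses the appeal to Skolem--Noether, so it should not be left tacit.
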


\begin{proof}
It follows from \cite[Prop.~9]{FKRS} that the eigenvalues of $\theta_\Q(A)(\tau)$ are $1$, $1$, $1$, $1$, $\zeta_r$, $\zeta_r$, $\overline\zeta_r$, $\overline\zeta_r$. Equation (\ref{equation: trace}) leaves three possibilities for the eigenvalues of $\theta_M(A)$: they must be either $1, 1, \zeta_r$, $\overline \zeta_r$, or $1$, $1$, $\zeta_r$, $\zeta_r$, or $1$, $1$, $\overline\zeta_r$, $\overline\zeta_r$. Since $\Tr\theta_M(A)$ is rational (by proposition \ref{proposition:norm}), only the first possibility can occur.
\end{proof}

\subsection{Equidistribution for Frobenius conjugacy classes}\label{section: Frob conj classes equidist}

We first recall the well-known notion of equidistribution on a compact topological space $X$ (see \cite[Chap.\ 1]{Se68}).
Let $\mathcal C(X)$ denote the Banach space of continuous, complex valued functions $f$ on $X$, with norm
$||f||=\sup_{x\in X}|f(x)|$.
Let $\mu$ be a Radon measure on $X$, a continuous linear form on $\mathcal C(X)$. 
Let $\{x_i\}_{i\geq 1}$ be a sequence of points of $X$. The sequence
$\{x_i\}_{i\geq 1}$ is said to be equidistributed with respect to $\mu$ if for
every $f\in \mathcal C(X)$ we have
$$\mu(f)=\lim_{m\rightarrow \infty}\frac{1}{m}\sum_{i=1}^mf(x_i)\,.$$

Note that if $\{x_i\}_{i\geq 1}$ is equidistributed with respect to $\mu$, then
$\mu$ is positive and has total mass $1$. We are particularly interested in the case where $X$ is an interval $I$ of $\R$.
In this case the $n$th moment $\M_n[\mu]$ of $\mu$ is the value $\mu(\varphi_n)$, where $\varphi_n$ is the function of $\mathcal C(I)$ defined by $\varphi_n(z)=z^n$.
Analogously, the $n$th moment of a sequence $\{x_i\}_{i\geq 1}$ on $I$, if it exists, is defined by
$$
\M_n[\{x_i\}_{i\geq 1}]=\lim_{m\rightarrow \infty}\frac{1}{m}\sum_{i=1}^m x_i^ n\,.
$$
Thus  if the sequence $\{x_i\}_{i\geq 1}$ is equidistributed with respect to $\mu$ on $I$, then its $n$th moment exists and is equal to $\M_n[\mu]$.

Let $F/k$ be a field extension, and let $P_{E_F}$ denote the set of primes of $F$ at which $E_F$ has good reduction. We write the normalized $L$-polynomial for $E_F$ at a prime $\p$ of $P_{E_F}$ as
$$
\overline L_{\p}(E_F,T)=1+a_1(E_F)(\p)T+T^2\,.$$

Choose an \emph{ordering by norm} $\{\p_i\}_{i\geq 1}$ of $P_{E_F}$, that is, an ordering for which $\norm\p_i\leq \norm\p_j$ for all $1\le i\le j$, and let $a_1(E_F)$ denote the sequence  
$$
\{a_1(E_F)(\p_i)\}_{i\geq 1}
$$
of real numbers in the interval $[-2,2]$. Equidistribution statements about $a_1(E_F)$ do not depend on the particular ordering by norm we have chosen. 

Until the end of this section, we assume that $F$ contains $kM$.
We begin by recalling classical results of Hecke and Deuring that yield equidistribution for $a_1(E_F)$ with respect to the measure
$$\mu_{\rm{cm}}=\frac{1}{\pi}\frac{dz}{\sqrt{4-z^2}}\,,$$ supported on $[2,-2]$.
Here $dz$ denotes the restriction of the Lebesgue measure on $\R$ to the interval $[-2,2]$. The measure $\mu_{\rm{cm}}$ is uniquely characterized by the fact that it is continuous and its $n$th moment is $b_n:=b_{0,n}$ (as in Theorem~\ref{theorem: main result}).

We actually require a slightly stronger equidistribution statement than the one above. Let $c$ be a Frobenius conjugacy class of an arbitrary finite Galois extension $F'/F$, and let $P_{c}$ denote the set of primes in $P_{E_F}$ that are unramified in $F'$ and whose Frobenius conjugacy class is $c$. We will show that the subsequence $a_{1,c}(E_F)$ of $a_1(E_F)$ obtained by restricting to the primes in $P_{c}$ is also equidistributed with respect to~$\mu_{\rm{cm}}$.

\begin{remark}\label{remark: cm case}
Henceforth, for a compact group $G$, let $\mu(G)$ denote its Haar measure. In terms of the (generalized) Sato-Tate Conjecture, the measure $\mu_{\rm {cm}}$ is seen as $\Phi_{1,*}(\mu(\ST(E_F)))$, where $\Phi_1$ is the trace map defined in equation (\ref{equation: projection map}) and $\ST(E_F)=\Unitary(1)$. Recall that the Sato-Tate group $\ST(E)$ of an elliptic curve~$E$ defined over $k$ with CM by $M$ is $\Unitary (1)$ (embedded in $\SU(2)$) if $M$ is contained in~$k$, and the normalizer of $\Unitary (1)$ in $\SU(2)$ if $M$ is not contained in~$k$.
\end{remark}

We follow the presentation in \cite[Chap.\ 1]{Gr80}. Let $\mathfrak p$ be a prime of $F$ of good reduction  for $E_F$. Let $\overline\F_{\p}$ denote the algebraic closure of the residue field of $F$ at $\p$. The image of the injection
$$
\End(A_{\overline\Q})\otimes\Q=M\hookrightarrow \End(E_{\overline\F_{\p}})\otimes \Q
$$
contains the Frobenius endomorphism $\Fr_{\p}\colon E_{\overline\F_{\p}}\rightarrow E_{\overline\F_{\p}}$, which acts on a point by raising its coordinates to the $q$th  power, where $q=\norm\p$.
Let $\alpha(\p):=\alpha(E_F)(\p)\in M^*$ denote the preimage of $\Fr_{\p}$ under this injection.
Since the characteristic polynomial of $\Fr_\p$ is reciprocal to the $L$-polynomial of $E_F$ at $\p$, we have
\begin{equation}\label{equation: prou}
a_1(E_F)(\mathfrak p)=-\frac{1}{\norm \p^{1/2}}\bigl(\sigma(\alpha(\p))+\overline\sigma(\alpha(\p))\bigr)\,.
\end{equation}

For any place $v$ of $ F$, let $ F_v$ denote the completion of $ F$ at $v$ and let $\mathcal O_v$ denote the ring of integers of $ F_v$. Let $I_{ F}=\prod_v' F_v$ denote the group of ideles of~$F$.
Here the product runs over all places $v$ of $ F$, and the prime means that if $\s=(\s_v)$ belongs to $I_F$, then $\s_v$ is in $\mathcal O_v^*$ for all but finitely many $v$.
We write~$v_\p$ for the valuation associated to a finite prime $\p$ of $ F$. We then attach to $E_F$ the group homomorphism
$$\chi_{E_F}\colon I_F\rightarrow M^*$$
uniquely characterized by the following three properties:
\begin{enumerate}[(i)]
\item $\Ker(\chi_{E_F})$ is an open subgroup of $I_F$.
\item If $\s=(a)$ is a principal idele ($a\in F^*$), then $\chi_{E_F}(\s)=N_{F/M}(a)$.
\item If $\s=(\s_v)$ is an idele with $\s_v=1$ at all infinite places of $F$ and at those finite places where $E_F$ has bad reduction, then
$$
\chi_{E_F}(\s)=\prod_{v_\p}\alpha(\p)^{v_\p(\s_\p)}\,.
$$
\end{enumerate}

\subsubsection{The $1$-dimensional $\ell$-adic representation attached to $E_F$.}
Fix a prime~$\ell$ different from the characteristic of $\overline\F_\p$ and an embedding of $\Qbar$ into $\Qbar_\ell$, and let $V_\ell(E_F)$ denote the (rational) $\ell$-adic Tate module of $E_F$. Define
\begin{equation}\label{equation: tensor product}
V_\sigma(E):=V_\ell(E_F)\otimes_{M,\sigma}\Qbar_\ell\,,
\end{equation}
where the tensor product is taken via the embedding $M\hookrightarrow\Qbar_\ell$ induced by $\sigma$. Similarly define $V_{\overline\sigma}(E)$.  We then have an isomorphism of $\overline\Q_\ell[G_{F}]$-modules:
\begin{equation}\label{equation: decomp ladic} 
V_\ell(E_{F})\otimes \overline\Q_\ell\simeq V_\sigma(E)\oplus V_{\overline\sigma}(E)\,.
\end{equation} 
Let $\varrho_{\ell,\sigma}\colon G_F\rightarrow \Aut(V_\sigma(E))$ denote
the $\ell$-adic character corresponding to the action of $G_F$ on $V_\sigma(E)$. If $\Frob_\p$ is an arithmetic Frobenius at $\p$ in $G_F$, then the value of $\varrho_{\ell,\sigma}(\Frob_\p)$ is $\sigma(\alpha(\p))$.
Define
$$
\psi_{\ell,\sigma}\colon I_F\rightarrow (M\otimes_{M,\sigma}\overline \Q_\ell)^*,\qquad \psi_{\ell,\sigma}(\s)=\chi_{E_F}(\s)\otimes (N_{F/M}(\s^{-1}))_\ell\,,
$$
where for an idele $\s$ in $ I_F$, the component of the idele $N_{F/M}(\s)$ in $I_M$ corresponding to the place $w$ is  $\prod_{v|w}N_{F_v/M_w}(\s_v)$, where the product runs over all places~$v$ of $F$ lying over $w$.
We then have $\psi_{\ell,\sigma}(F^*)=1$, by property (ii).
Thus  $\psi_{\ell,\sigma}$ is a continuous character on the group $C_F=I_F/F^*$ of classes of ideles. Since its image is totally disconnected, it is a character of $C_F/C_F ^0$, where $C_F^0$ is the identity component of $C_F$. Artin reciprocity yields an isomorphism
$\Rec\colon G_F^{\ab}\rightarrow C_F/C_F^0\,.$
Property (iii) then implies that $\psi_{\ell,\sigma}\circ\Rec(\Frob_\p)=\sigma(\alpha(\p))$, thus
\begin{equation}\label{equation: equal characters}
\psi_{\ell,\sigma}\circ\Rec(\Frob_\p)=\varrho_{\ell,\sigma}\,,
\end{equation}
as $\ell$-adic characters of $G_F$.

\subsubsection{The Hecke character attached to $E_F$.}
A \emph{Hecke character} of $F$ is a continuous homomorphism $\psi\colon I_{ F}\rightarrow \C^*$ such that $\psi(F^*)=1$.
For primes $\p$ where $\psi$ is unramified, let $\psi(\p)$ denote $\psi(\s)$, where $\s_\p$ is a uniformizer of $\mathcal O_\p$ and $\s_v=1$ for $v\ne \p$, and let $\psi(\p)=0$ when $\psi$ is ramified at $\p$.
 The $L$-function of $\psi$ is defined as
$$
L(\psi,s):=\prod_\p(1-\psi(\p)\norm \p^{-s})^{-1}\,.
$$
Hecke \cite{He20} showed that if $\psi$ is nontrivial, then $L(\psi,s)$ is a nonzero holomorphic function for $\Re(s)\geq 1$.
Let us fix an embedding of $\Qbar$ into $\C$, so that we may view $\sigma$ and $\overline\sigma$ as embeddings of $M$ into $\C$.
Define
$$
\psi_{\infty,\sigma}\colon I_F\rightarrow (M\otimes_{M,\sigma}\C)^*,\qquad \psi_{\infty,\sigma}(\s)=\chi_{E_F}(\s)\otimes (N_{F/M}(\s^{-1}))_\infty\,,
$$
where $\infty$ denotes the only infinite place of $M$.
Property (ii) of $\chi_{E_F}$, implies that $\psi_{\infty,\sigma}$ is a Hecke character. It is unramified at the primes of good reduction for~$E_F$, and we note that $\overline\psi_{\infty,\sigma}=\psi_{\infty,\overline\sigma}$. Let $|z|$ denote the absolute value
of a complex number $z$ and define
$$
\psi_{\infty,\sigma}^1\colon I_F\rightarrow \Unitary(1),\qquad \psi_{\infty,\sigma}^1(\s)=\psi_{\infty,\sigma}(\s)/|\psi_{\infty,\sigma}(\s)|\,.
$$
For every prime $\p$ of good reduction for $E_F$, let
\begin{equation}\label{eq: alpha1}
\alpha_1(\p):=\alpha_1(E_F)(\p):=\psi_{\infty,\sigma}^1\circ\Rec(\Frob_\p)=\sigma(\alpha(\p))/\norm \p^{1/2}.
\end{equation}
Let $\alpha_1$ denote the sequence $\{\alpha_1(\p_i)\}_{i\geq 1}$.

\subsubsection{Equidistribution statements.}
For a finite Galois extension $F'/F$ and a conjugacy class $c$ of $\Gal(F' /F)$, let $P_c$ be as above.
Let $\alpha_{1,c}:=\alpha_{1,c}(E_F)$ denote the subsequence of $\alpha_1$ obtained by restricting to the primes of $P_c$.
Our goal is to prove the following proposition. 

\begin{proposition}\label{proposition: equidistribution classes}
Let $c$ be any conjugacy class of $\Gal(F'/F)$. Then $\alpha_{1,c}$ is equidistributed with respect to $\mu(\Unitary(1))$.
\end{proposition}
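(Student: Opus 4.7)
The plan is to apply Weyl's equidistribution criterion and reduce the required asymptotic to the non-vanishing of a family of twisted Hecke $L$-functions on the line $\Re(s)=1$.

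First, by Chebotarev one has $\#\{\p\in P_c:\norm\p\le x\}\sim\frac{|c|}{|G|}\pi(x)$ with $G=\Gal(F'/F)$, so by Weyl's criterion it suffices to show that for every integer $n\neq 0$,
\begin{equation*}
\frac{1}{\pi(x)}\sum_{\p\in P_c,\ \norm\p\le x}\alpha_1(\p)^n\longrightarrow 0\,.
\end{equation*}
Expanding the indicator of $c$ via the orthogonality relation $\mathbf{1}_c(g)=\frac{|c|}{|G|}\sum_{\chi\in\Irr(G)}\overline{\chi(c)}\chi(g)$, this further reduces to proving that for every irreducible character $\chi$ of $G$ and every $n\neq 0$,
\begin{equation*}
S_{\chi,n}(x):=\sum_{\p\text{ unram.\ in }F',\ \norm\p\le x}\chi(\Frob_\p)\,\alpha_1(\p)^n=o(\pi(x))\,.
\end{equation*}

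Second, I would interpret $S_{\chi,n}$ through an Artin--Hecke $L$-function. Let $\rho_\chi$ denote the Artin representation of $G_F$ factoring through $G$ with character $\chi$, and write $\Psi_n$ for the unitary Hecke character $(\psi_{\infty,\sigma}^1)^n$ of $F$, viewed as a character of $G_F^{\mathrm{ab}}$ via (\ref{equation: equal characters}) and (\ref{eq: alpha1}). Then $\sum_\p\chi(\Frob_\p)\alpha_1(\p)^n\norm\p^{-s}$ agrees with $\log L(\rho_\chi\otimes\Psi_n,s)$ modulo a function holomorphic on $\Re(s)>\tfrac12$, so a Wiener--Ikehara-type Tauberian argument yields $S_{\chi,n}(x)=o(\pi(x))$ as soon as $L(\rho_\chi\otimes\Psi_n,s)$ is holomorphic and non-vanishing on $\Re(s)=1$.

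Third, I would obtain this analytic input via Brauer's induction theorem: write $\chi=\sum_H a_H\,\Ind_{G_{F_H}}^{G_F}\chi_H$ with $a_H\in\Z$ and $\chi_H$ a $1$-dimensional character of the Galois group of a subfield $F_H$. By the projection formula
\begin{equation*}
L(\rho_\chi\otimes\Psi_n,s)=\prod_H L\bigl(\chi_H\cdot(\Psi_n\circ N_{F_H/F}),\,s\bigr)^{a_H}\,,
\end{equation*}
a product of $L$-functions of $1$-dimensional unitary Hecke characters of the intermediate fields $F_H$. Since $E$ has CM, $\psi_{\infty,\sigma}^1$ has infinite order, so for $n\neq 0$ the character $\Psi_n\circ N_{F_H/F}$ has infinite order (its restriction to the finite-index image of the norm in the idele class group cannot become torsion without $\Psi_n$ being torsion itself), and twisting by the finite-order character $\chi_H$ preserves this. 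Hecke's theorem then gives that each factor is entire and non-vanishing on $\Re(s)=1$.

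The hard part will be passing from individual factors to the full product, since some exponents $a_H$ may be negative. Non-vanishing of the product follows at once from non-vanishing of each factor. Absence of poles at $s=1$ amounts to $\rho_\chi\otimes\Psi_n$ having no trivial subrepresentation, which for $n\neq 0$ holds because $\rho_\chi$ has finite image while $\Psi_n^{-1}$ has infinite image, and so $\Psi_n^{-1}$ cannot embed into $\rho_\chi$. This yields the required $L$-function statement and, combined with the reductions above, completes the proof.
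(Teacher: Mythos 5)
Your proof is correct, but it takes a genuinely different route from the paper's. The paper first reduces to the case where $\Gal(F'/F)$ is cyclic: it picks $\tau\in c$, passes to the fixed field $H$ of $\langle\tau\rangle$, and observes that $\alpha_{1,c}(E_F)$ decomposes as a union of the sequences $\alpha_{1,\tau^i}(E_H)$ over degree-one primes of $H$; it then applies Serre's equidistribution criterion to the compact group $\Unitary(1)\times\Gal(F'/H)$, whose irreducible characters are all of the form $\phi_a\otimes\chi$ with $\chi$ one-dimensional, so that the relevant $L$-functions are literally (shifted) Hecke $L$-functions $L((\psi_{\infty,\sigma})^{a}\otimes\chi,s+\tfrac{a}{2})$ and Hecke's theorem applies directly. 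You instead keep the possibly nonabelian group $\Gal(F'/F)$, expand $\mathbf{1}_c$ by column orthogonality, and handle the resulting Artin--Hecke $L$-functions $L(\rho_\chi\otimes\Psi_n,s)$ via Brauer induction and the inductivity of $L$-series, reducing again to Hecke's theorem for infinite-order unitary Hecke characters over the intermediate fields. Both arguments rest on the same analytic input; the paper's base-change trick buys a proof that never leaves the world of abelian (one-dimensional) characters and avoids Brauer induction entirely, while your argument is the more standard ``nonabelian joint equidistribution'' template and transfers verbatim to other situations where one cannot conveniently change the base field. Two small remarks: your observation that $\Psi_n\circ N_{F_H/F}$ has infinite order because the norm has finite-index image in the idele class group is exactly the point that makes the Brauer route work, and your final paragraph slightly overcomplicates matters --- once each factor $L(\chi_H\cdot(\Psi_n\circ N_{F_H/F}),s)$ is holomorphic and nonvanishing on $\Re(s)\geq 1$, the product with integer (possibly negative) exponents is automatically holomorphic and nonvanishing there, so no separate discussion of trivial subrepresentations is needed.
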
 

We first recall a theorem of Serre. Let $G$ be a compact group and $X$ be the set of its conjugacy classes. Let $P$ be an infinite subset of the primes of~$F$, and let $\{\p_i\}_{i\geq 1}$ be an ordering by norm of $P$.
Assume that each $\p$ of $P$ has been assigned a corresponding element $x_\p$ in $X$.

\begin{theorem}{\cite[p.\ I-23]{Se68}}\label{theorem: Serre equid} The sequence $\{x_{\p_i}\}_{i\geq 1}$ is equidistributed over~$X$ with respect to the image on $X$ of the Haar measure of $G$ if and only if $L(\varrho,s)$ is holomorphic and nonzero at $s=1$ for every irreducible and nontrivial representation $\varrho$ of $G$. Here $L(\varrho,s)$ stands for the infinite product
$$
\prod_{\p\in P} \det(1-\varrho(x_\p)\norm \p^{-s})^{-1}\,.
$$
\end{theorem}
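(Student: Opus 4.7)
The plan is to apply Serre's criterion (Theorem in the excerpt) to the compact group $G = \Unitary(1) \times \Gal(F'/F)$, assigning to each prime $\p \in P_{E_F}$ that is unramified in $F'$ the conjugacy class of $(\alpha_1(\p), \Frob_\p)$ in $G$. Since $\Unitary(1)$ is abelian, conjugacy classes in $G$ are naturally $\Unitary(1) \times \Conj(\Gal(F'/F))$, and the Haar measure of $G$ is the product $\mu(\Unitary(1))\otimes \mu(\Gal(F'/F))$. Once joint equidistribution in $G$ is established, the proposition will follow by restricting to primes with $\Frob_\p = c$: testing against $f(u)\cdot \mathbf{1}_{\{c\}}(g)$ and comparing with Chebotarev's theorem (the ``$n = 0$, $\rho$ trivial on $\Gal(F'/F)/\{\text{trivial rep}\}$'' marginal), one extracts that the subsequence $\alpha_{1,c}$ is equidistributed with respect to $\mu(\Unitary(1))$.

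To verify Serre's hypothesis, I parametrize the irreducible representations of $G$ as $\chi^n \boxtimes \rho$, where $n \in \Z$ is the weight on $\Unitary(1)$ and $\rho \in \Irr(\Gal(F'/F))$, so nontriviality means $(n,\rho) \neq (0, \mathbf{1})$. I split into two cases. When $n=0$ and $\rho$ is nontrivial, the associated $L$-function is (up to finitely many Euler factors) the Artin $L$-function $L(\rho,s)$; Brauer's induction theorem writes $\rho$ as a $\Z$-linear combination of characters induced from one-dimensional characters of subgroups, and for those one has Artin reciprocity identifying each piece with a Hecke $L$-function of a nontrivial finite-order Hecke character over some subfield of $F'$. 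Hecke's theorem (\cite{He20}) gives holomorphy and non-vanishing at $s=1$ of each such factor.

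When $n \neq 0$, using \eqref{eq: alpha1} the local factor at $\p$ is $\det(1 - \psi_{\infty,\sigma}^1(\Rec(\Frob_\p))^n\, \rho(\Frob_\p)\norm\p^{-s})^{-1}$. Applying Brauer induction to $\rho$ again reduces, via the standard inductivity of Artin $L$-functions, to the case where $\rho$ is a one-dimensional character $\eta$ of $\Gal(F'/F_H)$ for some intermediate field $F_H$; then
$$L(\chi^n \boxtimes \Ind_{\Gal(F'/F_H)}^{\Gal(F'/F)}\eta,\, s) \;=\; L(\chi^n|_{F_H}\boxtimes \eta,\, s),$$
an $L$-function over $F_H$ attached to the product of the Hecke character $\psi_{\infty,\sigma}^{1,\,n} \circ N_{F_H/F}$ and (the Hecke character attached by class field theory to) $\eta$. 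Since $\psi_{\infty,\sigma}^1$ is the unitarization of a Hecke character of infinite order (it governs the unit $\alpha(\p)/\norm\p^{1/2}$), the $n$-th power of its norm restriction is of infinite order and cannot be cancelled by the finite-order character $\eta$, so the resulting Hecke character is nontrivial. Hecke's theorem again gives holomorphy and non-vanishing at $s=1$.

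The main obstacle is the case $n \neq 0$: one must carefully justify that $\psi_{\infty,\sigma}^{1,\,n}\circ N_{F_H/F}\cdot \eta$ is nontrivial for every $n \neq 0$ and every finite-order $\eta$, and that the Brauer decomposition interacts correctly with the twist by $\chi^n$ so that the full $L$-function $L(\chi^n\boxtimes\rho,s)$ factors as an alternating product of honest Hecke $L$-functions. Once this is in place, Serre's criterion (together with the Chebotarev argument in the first paragraph) yields the claim.
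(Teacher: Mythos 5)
Your proposal does not prove the assigned statement: Theorem \ref{theorem: Serre equid} is exactly the equidistribution criterion that you invoke in your opening sentence (``apply Serre's criterion''), so as written the argument is circular with respect to the statement --- it assumes the theorem and uses it to deduce Proposition \ref{proposition: equidistribution classes}, which is a different (later) result in the paper. The paper itself offers no proof of Theorem \ref{theorem: Serre equid}; it is quoted from Serre \cite{Se68}. A proof of the criterion has two ingredients, neither of which appears in your write-up: (a) by the Peter--Weyl theorem, finite linear combinations of characters $\chi_\varrho$ of irreducible representations are dense in the space of continuous class functions on $G$, so equidistribution of $\{x_{\p_i}\}_{i\geq 1}$ with respect to the image of the Haar measure is equivalent to $\frac{1}{m}\sum_{i\le m}\chi_\varrho(x_{\p_i})\to 0$ for every nontrivial irreducible $\varrho$; and (b) an analytic (Tauberian) step relating the behavior of $L(\varrho,s)=\prod_{\p\in P}\det(1-\varrho(x_\p)\norm{\p}^{-s})^{-1}$ at $s=1$ to the partial sums $\sum_{\norm{\p}\le x}\chi_\varrho(x_\p)$: holomorphy and nonvanishing at $s=1$ make $-L'/L$ (equivalently $\log L$) regular there, and a Tauberian theorem together with the prime ideal theorem then gives $\sum_{\norm{\p}\le x}\chi_\varrho(x_\p)=o(x/\log x)$, hence the Ces\`aro means tend to $0$; the converse direction must also be addressed to get the ``if and only if''. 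Without these steps there is no proof of the theorem.

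For what it is worth, as an argument for Proposition \ref{proposition: equidistribution classes} your outline essentially matches the paper's proof, with one difference: the paper first reduces to the case where $F'/F$ is cyclic (replacing $F$ by the fixed field of $\langle\tau\rangle$ and using that the relevant primes have residue degree $1$), so that every irreducible representation of the Galois factor is one-dimensional and the resulting $L$-functions are literally Hecke $L$-functions, making Brauer induction unnecessary; your nontriviality argument for $\psi_{\infty,\sigma}^{1,\,n}\circ N_{F_H/F}\cdot\eta$ when $n\neq 0$ (infinite order versus finite order) is the same point the paper makes when identifying $L(\phi_a\otimes\chi,s)$ with the Hecke $L$-function $L((\psi_{\infty,\sigma})^a\otimes\chi,\,s+\tfrac a2)$. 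But none of this addresses Theorem \ref{theorem: Serre equid} itself, which is what was to be proved.
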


We now use Theorem~\ref{theorem: Serre equid} to prove Proposition~\ref{proposition: equidistribution classes}.
\begin{proof}
We first reduce to the case that $F'/F$ is abelian (in fact, cyclic).
Let $\tau$ be an element of $c$, and let $f$ denote its order. Define
$$
I(\tau)=\{i\in\{0,1,\dots,f-1\}\ |\  [\tau^i]=c\}\,.
$$
Let $H$ be the subfield of $F'$ fixed by ${\langle \tau\rangle}$. The residue degree over $F$ of a prime~$\pp$ of $H$ lying over $\p$ is $1$, thus $\alpha_1(E_H)(\pp)=\alpha_1(E_F)(\p)$. Then $\alpha_{1,c}(E_F)$ is the union 
$$
\bigsqcup_{i\in I(\tau)} \alpha_{1,\tau^i}(E_H)\,,
$$
where we identify $\tau^i$ with its conjugacy class in the cyclic group $\Gal(F'/H)$.
It follows that the sequence $\alpha_{1,c}=\alpha_{1,c}(E_F)$ is $\mu(\Unitary(1))$-equidistributed if all its subsequences $\alpha_{1,\tau^i}(E_H)$ are. But this hypothesis is true if one assumes that the proposition holds for abelian extensions.

So suppose that $F'/F$ is abelian, and define $G:=\Unitary(1)\times\Gal(F'/F)$ and $x_\p:=\alpha_1(\p)\times \Frob_\p$ for each prime in $P_{E_F}$ unramified in $F'/F$. 
Proving the proposition is equivalent to showing that $\{x_{\p_i}\}_{i\geq 1}$ is equidistributed over the set $X$ of conjugacy classes of $G$ with respect to the measure induced by the Haar measure of $G$.
The irreducible characters of $G$ are of the form $\phi_a\otimes \chi$, where $\phi_a\colon \Unitary(1)\rightarrow \C^*$ is a character of $\Unitary(1)$, which is of the form  $\phi_a(z)=z^a$ for some integer $a$, and $\chi$ is an irreducible character of $\Gal(F'/F)$, which is $1$-dimensional, since $\Gal(F'/F)$ is abelian. By Theorem~\ref{theorem: Serre equid}, it is enough to show that if $\phi_a\otimes\chi$ is nontrivial, then
$$
L(\phi_a\otimes\chi,s)=\prod_\p(1-\frac{\psi_{\infty,\sigma}(\p)^a}{\norm \p^{a/2}}\chi(\p)\norm \p ^{-s})^{-1}
$$
is holomorphic and nonzero at $s=1$.

Suppose that $a\geq 0$. Via Artin reciprocity we may view $(\psi_{\infty,\sigma})^a\otimes \chi$ as a Hecke character, and then $L(\phi_a\otimes\chi,s)$  is equal, up to a finite number of factors, to the Hecke $L$-function $L((\psi_{\infty,\sigma})^a\otimes\chi,s+\frac{a}{2})$.
Since $(\psi_{\infty,\sigma})^a\otimes \chi$ is a nontrivial Hecke character, its $L$-function is holomorphic and nonzero for $\Re s\geq 1$ and in particular, so is $L(\phi_a\otimes\chi,s)$ at $s=1$. If $a<0$, then we can repeat the argument, observing that  $L(\phi_a\otimes\chi,s)$ also coincides, up to a finite number of factors, with the Hecke $L$-function $L((\overline\psi_{\infty,\sigma})^{-a}\otimes\chi,s-\frac{a}{2})$.
\end{proof}

Recalling that $\mu_{\rm{cm}}=\frac{1}{\pi}\frac{dz}{\sqrt{4-z^2}}$ supported on $[-2,2]$ is the image by $\Phi_1$ of the Haar measure of $\Unitary(1)$, we obtain the following.

\begin{corollary}\label{corollary: equidist frobenius classes} Let $E$ be an elliptic curve defined over $k$ with CM by an imaginary quadratic field $M$. Let $F$ be any field containing $kM$, let $F'/F$ be a finite Galois extension, and let $c$ be a conjugacy class of $\Gal(F'/F)$.
Then
\begin{enumerate}[\rm (i)]
\item The sequence $a_{1,c}(E_F)$ is equidistributed with respect to the measure $\mu_{\rm{cm}}$.
\item $\M_n[a_{1,c}(E_F)] = \M_n[a_1(E_F)]$.
\end{enumerate}
\end{corollary}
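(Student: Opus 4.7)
The plan is to reduce this to Proposition \ref{proposition: equidistribution classes} by pushing forward along a continuous map. The key observation is that $a_1(E_F)(\p)$ is the image of $\alpha_1(\p)$ under a continuous map $\Phi_1\colon \Unitary(1)\to [-2,2]$: combining (\ref{equation: prou}) with the definition $\alpha_1(\p)=\sigma(\alpha(\p))/\norm{\p}^{1/2}$ in (\ref{eq: alpha1}) and the fact that $\overline\sigma(\alpha(\p))/\norm{\p}^{1/2}=\overline{\alpha_1(\p)}$, we obtain
$$
a_1(E_F)(\p)=-\bigl(\alpha_1(\p)+\overline{\alpha_1(\p)}\bigr)=\Phi_1(\alpha_1(\p)),
$$
where $\Phi_1(z)=-(z+\overline{z})$ is the trace map (i.e.\ $\Phi_1$ as in (\ref{equation: projection map}) for the embedding $\Unitary(1)\hookrightarrow\SU(2)$).

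To prove (i), I would invoke Proposition \ref{proposition: equidistribution classes} to conclude that $\alpha_{1,c}$ is equidistributed on $\Unitary(1)$ with respect to the Haar measure $\mu(\Unitary(1))$. Since $a_{1,c}(E_F)$ is obtained by applying the continuous map $\Phi_1$ termwise to the sequence $\alpha_{1,c}$ (with the same ordering by norm over $P_c$), the pushforward principle for equidistribution gives that $a_{1,c}(E_F)$ is equidistributed with respect to $(\Phi_1)_*\mu(\Unitary(1))$. Finally, as recalled in Remark \ref{remark: cm case}, the measure $(\Phi_1)_*\mu(\Unitary(1))$ equals $\mu_{\mathrm{cm}}$; this is a direct calculation with the uniform measure on $\Unitary(1)$ parametrized as $e^{i\theta}\mapsto -2\cos\theta$.

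For (ii), once (i) is established, the result is essentially automatic. Since $\mu_{\mathrm{cm}}$ is supported on the compact interval $[-2,2]$, the continuous function $\varphi_n(z)=z^n$ lies in $\mathcal{C}([-2,2])$, so equidistribution of $a_{1,c}(E_F)$ with respect to $\mu_{\mathrm{cm}}$ yields
$$
\M_n[a_{1,c}(E_F)] = \M_n[\mu_{\mathrm{cm}}] = b_n.
$$
Applying (i) in the special case $F'=F$ (so that the only conjugacy class is the trivial one, and $P_c=P_{E_F}$) gives the identical value for $\M_n[a_1(E_F)]$, so the two moments agree.

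There is no real obstacle: the analytic content, namely the holomorphy and non-vanishing of Hecke $L$-functions twisted by finite-order characters at $s=1$, was already consumed in the proof of Proposition \ref{proposition: equidistribution classes}. The only care needed is to verify that the pushforward of $\mu(\Unitary(1))$ under $\Phi_1$ is indeed $\mu_{\mathrm{cm}}$, which is a routine change-of-variables computation.
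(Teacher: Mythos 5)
Your proof is correct and is essentially the paper's argument: the paper deduces the corollary from Proposition \ref{proposition: equidistribution classes} in one line by observing that $\mu_{\rm cm}$ is the pushforward of the Haar measure of $\Unitary(1)$ under the trace map $\Phi_1$, which is exactly the reduction you carry out (your part (ii) likewise matches, both moments being $b_{0,n}$).
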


\subsection{Equidistribution of \texorpdfstring{$a_1(A)$}{} and \texorpdfstring{$a_2(A)$}{}}\label{section: moments general}

As in \S\ref{section: Galois modules}, $A$ is an abelian surface defined over $k$ with $A_\Qbar\sim E^2_\Qbar$, where $E$ is an elliptic curve defined over $k$ with CM by $M$, and we have the tower of fields $kM\subseteq K\subseteq L$, where $L/k$ is the minimal extension over which all the homomorphisms from $A_\Qbar$ to $E_\Qbar$ are defined, and $K/k$ is the minimal extension over which all the endomorphisms of $A_\Qbar$ are defined.

For any field extension $F/k$, let $P_{A_F}$ denote the set of primes of $F$ at which~$A_F$ has good reduction. For~$\p$ in $P_{A_F}$, we write the normalized $L$-polynomial for $A_F$ at $\p$ as
$$
\overline L_{\mathfrak p}(A_F,T)=1+ a_1(A_F)(\mathfrak p)T+ a_2(A_F)(\mathfrak p)T^2+ a_1(A_F)(\mathfrak p)T^3+T^4\,.
$$

Let $P$ be the set of primes lying in $P_{A_F}$ and $P_{E_F}$ that are unramified in $FL$. Choose an ordering $\{\p_i\}_{i\geq 1}$ by norm of $P$, and let $a_1(A_F)$ and $a_2(A_F)$ denote the sequences
$$
\{a_1(A_F)(\p_i)\}_{i\geq 1}\,,\qquad\{a_2(A_F)(\p_i)\}_{i\geq 1}\,
$$
respectively.
In this section we use the results in  \S\ref{section: Galois modules} and \S\ref{section: Frob conj classes equidist} to prove equidistribution for $a_1(A)$ and $a_2(A)$.

\begin{lemma}\label{lemma: trans coeff}
Let $\p$ be a prime of good reduction for $A$ and $E$ that splits in $kM$ and is unramified in $L$.

\begin{enumerate}[\rm (i)]
\item With $u_1=\operatorname {Re} a_1(\theta)(\Frob_{\mathfrak p})$ and $u_2=\operatorname{Im} a_1(\theta)(\Frob_{\mathfrak p})$ we have
$$a_1(A)(\mathfrak p)=u_1a_1(E)(\p) \pm u_2 \sqrt{4-a_1(E)(\p)^2}\,.$$
\item With $v_1=\operatorname {Re} a_2(\theta)(\Frob_{\mathfrak p})$ and $v_2=\operatorname{Im} a_2(\theta)(\Frob_{\mathfrak p})$ we have
 $$a_2(A)(\mathfrak p)=v_1 a_1(E)(\p)^2-2v_1+|a_1(\theta)(\Frob_{\mathfrak p})|^2\mp v_2a_1(E)(\p)\sqrt{4-a_1(E)(\p)^2}\,. $$
\end{enumerate}
\end{lemma}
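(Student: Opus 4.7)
The plan is to compute the four Frobenius eigenvalues on $V_\ell(A)$ directly from an $\ell$-adic version of the decomposition established in \S\ref{section: Galois modules}, and then to read off (i) and (ii) as the relevant elementary symmetric functions.

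Since $\p$ splits in $kM$, I pick a prime $\pp$ of $kM$ lying above $\p$, so that $\Frob_\p$ can be regarded as an element of $\Gal(L/kM)$ and $\theta(\Frob_\p)$, $\overline\theta(\Frob_\p)$ are defined. The isogeny $A_L\sim E_L^2$ gives an $M$-linear isomorphism of $\Q_\ell[\Gal(\overline\Q/L)]$-modules
\[
V_\ell(A)\ \cong\ V_\ell(E_L)\otimes_M\bigl(\Hom(E_L,A_L)\otimes\Q\bigr);
\]
extending scalars to $\overline\Q_\ell$ and invoking the CM decomposition (\ref{equation: decomp ladic}) yields
\[
V_\ell(A)\otimes\overline\Q_\ell\ \cong\ V_\sigma(E)\otimes_{\overline\Q_\ell}W_{\ell,\sigma}\ \oplus\ V_{\overline\sigma}(E)\otimes_{\overline\Q_\ell}W_{\ell,\overline\sigma},
\]
where $W_{\ell,\sigma}:=\Hom(E_L,A_L)\otimes_{M,\sigma}\overline\Q_\ell$ carries the $\ell$-adic avatar of $\theta$ and $W_{\ell,\overline\sigma}$ that of $\overline\theta$. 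Since $\overline\theta(\Frob_\p)$ is the complex conjugate of $\theta(\Frob_\p)$, the four eigenvalues of $\Frob_\p$ on $V_\ell(A)\otimes\overline\Q_\ell$ are
\[
\sigma(\alpha(\p))\lambda_1,\quad \sigma(\alpha(\p))\lambda_2,\quad \overline\sigma(\alpha(\p))\overline\lambda_1,\quad \overline\sigma(\alpha(\p))\overline\lambda_2,
\]
where $\lambda_1,\lambda_2$ are the eigenvalues of $\theta(\Frob_\p)$, so $\lambda_1+\lambda_2=a_1(\theta)(\Frob_\p)$ and $\lambda_1\lambda_2=a_2(\theta)(\Frob_\p)$.

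With the eigenvalues in hand, (i) and (ii) reduce to symmetric-function arithmetic. Setting $\sigma(\alpha(\p))/\sqrt{q}=x+iy$, formula (\ref{equation: prou}) forces $x=-a_1(E)(\p)/2$, and $|\sigma(\alpha(\p))|^2=q$ gives $y=\pm\tfrac{1}{2}\sqrt{4-a_1(E)(\p)^2}$; the sign encodes the choice of $\pp$ versus $\overline\pp$. For (i), the sum of the four eigenvalues is $2\operatorname{Re}\bigl(\sigma(\alpha(\p))\,a_1(\theta)(\Frob_\p)\bigr)$, so dividing by $-\sqrt{q}$ and multiplying out $(x+iy)(u_1+iu_2)$ produces the claimed expression. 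For (ii), group the six pairwise products into the two diagonal pairs $\beta_1\beta_2,\beta_3\beta_4$, whose sum is $2\operatorname{Re}\bigl(\sigma(\alpha(\p))^2\,a_2(\theta)(\Frob_\p)\bigr)$, and the four cross pairs, which collapse via $\sigma(\alpha(\p))\overline\sigma(\alpha(\p))=q$ and $(\lambda_1+\lambda_2)(\overline\lambda_1+\overline\lambda_2)=|a_1(\theta)(\Frob_\p)|^2$ to $q\,|a_1(\theta)(\Frob_\p)|^2$; dividing by $q$ and expanding $(x^2-y^2+2ixy)(v_1+iv_2)$ then gives (ii).

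I do not foresee any conceptual obstacle. The one point requiring care is to verify that the $M$-module structure on $\Hom(E_L,A_L)$ used to define $\theta$ matches the $M$-action on $V_\ell(E_L)$ coming from the CM on $E$, so that the displayed tensor decomposition really is an isomorphism of Galois modules; once that compatibility is in place, the proofs of (i) and (ii) are just bookkeeping of real and imaginary parts together with a consistent sign convention encoding the choice of $\pp$ above $\p$.
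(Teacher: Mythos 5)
Your proposal is correct and follows essentially the same route as the paper: both decompose $V_\ell(A)\otimes\overline{\Q}_\ell$ as $\bigl(\theta_{M,\sigma}(E,A)\otimes V_\sigma(E)\bigr)\oplus\bigl(\theta_{M,\overline\sigma}(E,A)\otimes V_{\overline\sigma}(E)\bigr)$ via the CM splitting of $V_\ell(E)$ and a \cite{Fit10}-type argument, read off the four Frobenius eigenvalues as products, and finish with the same real/imaginary-part bookkeeping using $\alpha_1(\p)=x+iy$ with $x=-a_1(E)(\p)/2$. The compatibility of $M$-module structures that you flag as the one delicate point is precisely what the paper delegates to the argument of \cite[Thm.~3.1]{Fit10}.
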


\begin{proof}
Define $V_{\sigma}(A)$ and $V_{\overline\sigma}(A)$ as in equation (\ref{equation: tensor product}).
We then have the following isomorphism of $\overline\Q_\ell[G_{kM}]$-modules:
$$V_\ell(A_{kM})\otimes\overline\Q_\ell\simeq V_\sigma(A)\oplus V_{\overline\sigma}(A)\,.$$
By arguments analogous to those in \cite[Thm.~3.1]{Fit10}, we have
$$V_\sigma(A)\simeq \theta_{M,\sigma}(E,A)\otimes V_\sigma(E)\,,\qquad V_{\overline\sigma}(A)\simeq \theta_{M,\overline\sigma}(E,A)\otimes V_{\overline\sigma}(E)\,.$$
Thus there is an isomorphism of $\overline\Q_\ell[G_{kM}]$-modules:
\begin{equation}\label{equation:flip}
V_\ell(A)\otimes\overline\Q_\ell\simeq \theta_{M,\sigma}(E,A)\otimes V_\sigma(E)\,\oplus\,\theta_{M,\overline\sigma}(E,A)\otimes V_{\overline\sigma}(E)\,.
\end{equation}

To shorten notation, we write $\alpha_1(\p)$ for $\alpha_1(E_{kM})(\p)=\sigma(\alpha(E_{kM})(\p))/\norm{\p}^{1/2}$, as defined in \eqref{eq: alpha1}. Then $\overline{ \alpha_1(\p)}=\overline\sigma(\alpha(E_{kM})(\p))/\norm{\p}^{1/2}$, and (\ref{equation:flip}) implies that
\begin{equation}\label{equation: determines coefficients}
\begin{array}{l}
\vspace{6pt}
a_1(A_{kM})(\mathfrak p)=-a_1(\mathfrak p)\alpha_1(\p)-\overline{a_1(\mathfrak p)}\,\overline{\alpha_1(\p)}\,,\\
a_2(A_{kM})(\mathfrak p)= a_2(\mathfrak p) \alpha_1(\p)^2+\overline{a_2(\mathfrak p)}\,\overline{\alpha_1(\p)}^2+a_1(\mathfrak p)\overline{a_1(\mathfrak p)}\,,\\
\end{array}
\end{equation}
where $a_i(\p)$ denotes $a_i(\theta)(\Frob_\p)$.
The proposition then follows from the fact that $a_1(E_{kM})(\p)= -\alpha_1(\p) -\overline {\alpha_1(\p)}$.
\end{proof}

\begin{proposition}\label{proposition:import} For $\tau\in\Gal(L/kM)$, let $u=u(\tau)=|a_1(\theta)(\tau)|$. Then $a_1(A_{kM})$ and  $a_2(A_{kM})$ are equidistributed with respect to the measures
\begin{enumerate}[\rm (i)]
\item $\mu(a_1(A_{kM})):=\frac{1}{[L:kM]} \frac{1}{\pi}\sum_\tau\frac{dz}{\sqrt{4u^2-z^2}}\mathbf{1}_{[-2u,2u]}$\,,
\item $\mu(a_2(A_{kM})):=\frac{1}{[L:kM]}\frac{1}{\pi}\sum_\tau \frac{dz}{\sqrt{4-(u^2-z)^2}}\mathbf{1}_{[u^2-2,u^2+2]} \,,$
\end{enumerate}
whose support lies in the intervals $I_1=[-4,4]$ and $I_2=[-6,6]$, respectively.
In each sum $\tau$ ranges over $\Gal(L/kM)$, and $\mathbf 1_{[a,b]}$ is the characteristic function of the interval $[a,b]\subseteq \R$.  Moreover, we have
\begin{enumerate}[\rm (i)]
\item $\M_n[ a_1(A_{kM})]=\frac{1}{[L:kM]}\sum_\tau b_{0,n}u^n\,,$
\item $\M_n[ a_2(A_{kM})]=\frac{1}{[L:kM]}\sum_\tau b_{u^2,n}\,,$
\end{enumerate}
where the integer $b_{m,n}$ is the coefficient of $X^n$ in $(X^2+mX+1)^n$.
\end{proposition}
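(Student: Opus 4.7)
My plan is to decompose the sequence of primes of $kM$ according to their Frobenius conjugacy class $c\subseteq\Gal(L/kM)$, establish equidistribution within each class, and then combine the pieces via Chebotarev's density theorem. The main input is Proposition~\ref{proposition: equidistribution classes} applied with $F=kM$ and $F'=L$: it yields that the normalized Hecke eigenvalues $\alpha_1(\p)$ are equidistributed on the full circle $\Unitary(1)$ even when $\p$ is constrained to have Frobenius class $c$. This refinement -- of the angle $\phi$ with $\alpha_1(\p)=e^{i\phi}$, not just of $\cos\phi$ -- is the essential ingredient, since both the sign ambiguity in Lemma~\ref{lemma: trans coeff} and the factor $\alpha_1(\p)^2$ in (\ref{equation: determines coefficients}) genuinely depend on $\phi$.

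Fix $\tau\in\Gal(L/kM)$. Since $a_1(\theta)=\Tr\theta$ and $a_2(\theta)=\det\theta$ are class functions on $\Gal(L/kM)$, they take constant values $a_1(\tau)$ and $a_2(\tau)$ on primes whose Frobenius is conjugate to $\tau$. Rewriting (\ref{equation: determines coefficients}) gives
$$
a_1(A_{kM})(\p)=-2\Real\!\bigl(a_1(\tau)\alpha_1(\p)\bigr),\qquad a_2(A_{kM})(\p)=u^2+2\Real\!\bigl(a_2(\tau)\alpha_1(\p)^2\bigr),
$$
where $u=u(\tau)=|a_1(\tau)|$. Because $\theta$ factors through the finite group $\Gal(L/kM)$, every eigenvalue of $\theta(\tau)$ is a root of unity, so $|a_2(\tau)|=|\det\theta(\tau)|=1$. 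Writing $a_1(\tau)=ue^{i\psi_1}$, $a_2(\tau)=e^{i\psi_2}$, the two expressions simplify to $-2u\cos(\psi_1+\phi)$ and $u^2+2\cos(\psi_2+2\phi)$. Conditional on Frobenius class $c$, Proposition~\ref{proposition: equidistribution classes} makes $\phi$ uniformly distributed on $[0,2\pi)$, and a standard change of variable ($z=2u\cos\alpha$, resp.\ $z=u^2+2\cos\alpha$) identifies the push-forward as the arcsine-type density on $[-2u,2u]$ (resp.\ $[u^2-2,u^2+2]$) appearing in (i) and (ii).

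Combining equidistributed subsequences weighted by their asymptotic density is routine: Chebotarev's density theorem assigns density $|c|/[L:kM]$ to the primes with Frobenius class $c$, and reindexing $\sum_{c}|c|(\cdot)$ as $\sum_{\tau\in\Gal(L/kM)}(\cdot)$ produces exactly the sums in the stated measures. This yields equidistribution of $a_1(A_{kM})$ and $a_2(A_{kM})$ with respect to $\mu(a_1(A_{kM}))$ and $\mu(a_2(A_{kM}))$, and the support bounds $|a_1|\le 4$, $|a_2|\le 6$ follow since $2u\le 4$ and $u^2+2\le 6$.

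For the moments, I would integrate $z^n$ against each density after the substitutions above, reducing to $\frac{1}{\pi}\int_0^\pi(2\cos\alpha)^n\,d\alpha$ and $\frac{1}{\pi}\int_0^\pi(u^2+2\cos\alpha)^n\,d\alpha$. The first equals $b_{0,n}$ directly, giving (i). The second equals $b_{u^2,n}$ by the generating-function identity
$$
b_{m,n}=[X^n](X^2+mX+1)^n=[X^0](X+X^{-1}+m)^n=\frac{1}{\pi}\int_0^\pi(2\cos\alpha+m)^n\,d\alpha,
$$
obtained by writing $(X^2+mX+1)^n=X^n(X+X^{-1}+m)^n$ and extracting the constant term via the unit-circle contour $X=e^{i\alpha}$. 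The only genuine obstacle is ensuring the correct equidistribution on $\Unitary(1)$ (rather than merely of $a_1(E)$ on $[-2,2]$) \emph{within each} Frobenius class; everything else is bookkeeping once Lemma~\ref{lemma: trans coeff} and Proposition~\ref{proposition: equidistribution classes} are in hand.
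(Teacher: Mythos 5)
Your proposal is correct and follows essentially the same route as the paper's proof: both decompose the primes by Frobenius conjugacy class in $\Gal(L/kM)$, invoke Proposition~\ref{proposition: equidistribution classes} (with $F=kM$, $F'=L$) as the key input giving equidistribution of $\alpha_1(\p)$ on all of $\Unitary(1)$ within each class, push forward through the formulas of (\ref{equation: determines coefficients}), and combine via Chebotarev. The only differences are cosmetic — you parametrize by the angle of $\alpha_1(\p)$ and evaluate the moments by a contour-integral identity for $b_{m,n}$, where the paper multiplies by unit-modulus constants and uses the binomial identity $\sum_i\binom{n}{i}\binom{2i}{i}(m-2)^{n-i}=b_{m,n}$.
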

\begin{proof}
We can rewrite the equations in (\ref{equation: determines coefficients}) as follows:
$$
\begin{array}{lll}
a_1(A_{kM})(\p)&=&|a_1(\p)|\left(\frac{-a_1(\p)}{|a_1(\p)|}\alpha_1(\p)+\frac{-\overline{a_1(\p)}}{|a_1(\p)|}\ \overline{\alpha_1(\p)}\right)\,,\\[4pt]
a_2(A_{kM})(\p)&= &|a_2(\p)|\left(\frac{a_2(\p)^{1/2}}{|a_2(\p)|^{1/2}} \alpha_1(\p)+\frac{\overline{a_2(\theta)}^{1/2}}{|a_2(\p)|^{1/2}}\ \overline{\alpha_1(\p)}\right)^2-2|a_2(\p)|+|a_1(\p)|^2\\
& =&\left(a_2(\p)^{1/2} \alpha_1(\p)+\overline{a_2(\p)}^{1/2}\ \overline{\alpha_1(\p)}\right)^2-2+|a_1(\p)|^2\,,
\end{array}
$$
where $a_i(\p)$ denotes $a_i(\theta)(\Frob_\p)$, and we have used $|a_2(\p)|=1$.
The equidistribution statements now follow from the Cebotarev density theorem and the two facts below:

\begin{enumerate}[\rm (1)]
\item For any $z\in\Unitary(1)$ and any conjugacy class $c$ of $\Gal(L/kM)$, the sequence $z\alpha_{1,c}$ is $\mu(\Unitary(1))$-equidistributed on $\Unitary(1)$. Indeed, Proposition \ref{proposition: equidistribution classes} ensures equidistribution of $\alpha_{1,c}$, and invariance under translations is in fact the defining property of the Haar measure.
Thus the sequence $z\alpha_{1,c}+\overline z\overline\alpha_{1,c}$ is $\mu_{\rm cm}$-equidistributed on $I_1(E_{kM})=[-2,2]$.
\item If a sequence $\beta=\{\beta_i\}_{i\geq 1}$ is $\mu_{\rm cm}$-equidistributed on $[-2,2]$, then for $u\in\mathbb R_{>0}$
\begin{itemize}
\item The sequence $u\beta$ is equidistributed on $[-2u,2u]$ with respect to the measure $\frac{1}{\pi}\frac{dz}{\sqrt{4u^2-z^2}}$. 
\item The sequence $\{\beta_i^2-2+u^2\}_{i\ge 1}$ is equidistributed on $[u^2-2,u^2+2]$ with respect to the measure $\frac{1}{\pi}\frac{dz}{\sqrt{4-(u^2-z)^2}}$.
\end{itemize}
\end{enumerate}
Regarding the moments, the Cebotarev density theorem implies that
$$\M_n[a_1(A_{kM})]=\frac{1}{[L:kM]}\sum_\tau |a_1(\theta)(\tau)|^{n}\cdot \M_n[z([\tau])\alpha_1+\overline{z([\tau])}\overline{\alpha}_1\ |\ P_{[\tau]}]\,,$$
where $z([\tau])=-a_1(\theta)(\tau)/|a_1(\theta)(\tau)|$. But now (1) implies that 
$$
\M_n[z([\tau])\alpha_1+\overline{z([\tau])}\overline{\alpha}_1\ |\ P_{[ \tau]}]=b_{0,n}\,.
$$

The same argument is used to compute 
$$\M_n[a_2(A_{kM})]=\frac{1}{[L:kM]}\sum_\tau\sum_{i=0}^{n}\binom{n}{i}\binom{2i}{i}\left(|a_1(\theta)(\tau)|^2-2\right)^{n-i}\,.$$
One then applies
\begin{align*}
\sum_{i=0}^n\binom{n}{i}\binom{2i}{i}(m-2)^{n-i} &= [X^n]((X+1)^2+(m-2)X)^n\\
 &= [X^n](X^2+mX+1)^n=b_{m,n},
\end{align*}
where $[X^n](f(X)$ denotes the coefficient of $X^n$ in the polynomial $f(X)$.
\end{proof}

We now generalize the definitions of $o(r)$ and $\overline o(s)$ given in \S\ref{subsection: main result} for $k=\Q$.

\begin{definition} Let $o(r)$ count the elements in $\Gal(L/kM)$ whose projection in $\Gal(K/kM)$ has order $r$.
Let $\overline o(s)$ count the elements in $\Gal(L/k)\setminus \Gal(L/kM)$ of order $s$.
\end{definition}

If $k=kM$, then $a_i(A)$ is equidistributed with respect to $\mu(a_i(A_{kM}))$ and $\M_n[a_i(A)]=\M_n[a_i(A_{kM})]$, for $i=1,2$.

\begin{corollary}\label{corollary: a2 moments}
Suppose $k\not=kM$. Then the $a_1(A)$ and $a_2(A)$ are equidistributed with respect to the measures
\begin{enumerate}[\rm (i)]
\item $\mu(a_1(A)):=\frac{1}{2}\mu(a_1(A_{kM}))+\frac{1}{2}\delta_0\,,$
\item $\mu(a_2(A)):=\frac{1}{2}\mu(a_1(A_{kM}))+\frac{1}{2[L:kM]}\bigl(\bar o(2)\delta_2 + \bar o(4)\delta_{-2} + \bar o(6)\delta_{-1} + \bar o(12)\delta_{1}\bigr)\,,$
\end{enumerate}
whose support lies in the intervals  $I_1=[-4,4]$ and $I_2=[-6,6]$, respectively. Here $\delta_z$ denotes the Dirac measure at $z$.
We also have
$$
\begin{array}{lll}
{\rm (i)} & \M_n[a_1(A)]=&\frac{1}{[L:k]}\bigl(o(1)2^n+o(3) + o(4)2^{n/2} + o(6)3^{n/2}\bigr) b_{0,n}\,,\\[6pt]
{\rm (ii)} &\M_n[a_2(A)]=&\frac{1}{[L:k]}\bigl(o(1)b_{4,n} + o(2)b_{0,n} + o(3)b_{1,n} + o(4)b_{2,n} + o(6)b_{3,n} \\[6pt]
&& \qquad\quad +\  \bar o(2)2^n + \bar o(4)(-2)^n + \bar o(6)(-1)^n + \bar o(12))\,.
\end{array}
$$
\end{corollary}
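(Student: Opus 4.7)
The plan is to split the primes of $k$ (unramified in $L$ and of good reduction for both $E$ and $A$) into those that are split in $kM$ and those that are inert, and to compute the contribution of each type separately. By Chebotarev applied to $\Gal(kM/k)$, each subset has density $1/2$; the ramified primes have density zero and may be ignored.

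\emph{Split contribution.} If $\p$ splits in $kM$, write $\pp$ for either prime above it, so that $\norm{\pp}=\norm{\p}$ and $\bar L_{\p}(A,T)=\bar L_{\pp}(A_{kM},T)$; in particular $a_i(A)(\p)=a_i(A_{kM})(\pp)$. The primes $\pp$ of $kM$ of norm $\le x$ coming from inert primes of $k$ satisfy $\norm\p^2\le x$, so they form a subset of density zero inside the full collection of primes of $kM$ of norm $\le x$. Hence the equidistribution of $a_i(A_{kM})$ from Proposition~\ref{proposition:import} (with respect to $\mu(a_i(A_{kM}))$) comes entirely from split primes. Dividing through by the factor $2$ relating the total number of primes of $k$ to those of $kM$ above split primes, the split contribution to $\mu(a_i(A))$ is $\tfrac{1}{2}\mu(a_i(A_{kM}))$, with moments $\tfrac{1}{2}\M_n[a_i(A_{kM})]$.

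\emph{Inert contribution.} Tensoring the isomorphism~\eqref{equation:flip} over $\bar\Q_\ell[G_{kM}]$ with the induced structure, one sees that $V_\ell(A)\otimes\bar\Q_\ell$ is an induced representation from $G_{kM}$ to $G_k$; since the character of an induced representation vanishes outside the subgroup, $\Tr(\Frob_\p\mid V_\ell(A))=0$ for every inert $\p$, and therefore $a_1(A)(\p)=0$. This also forces the four normalized Frobenius eigenvalues at $\p$ to come in pairs $\{\pm\lambda_1,\pm\lambda_2\}$, whence $a_2(A)(\p)=-\lambda_1^2-\lambda_2^2$. To determine $\lambda_1,\lambda_2$, I combine Proposition~\ref{proposition:trthetaQ} with the fact that for inert $\p$ the normalized eigenvalues of $\Frob_\p$ on $V_\ell(E)$ are $\pm i$: using the relation between $\theta_\Q(E,A)$ and the Galois action on $\Hom(V_\ell E,V_\ell A)$, the multiset $\{\pm\lambda_1 i,\pm\lambda_2 i\}$ must equal the eigenvalue list from the table in Proposition~\ref{proposition:trthetaQ}. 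A case-by-case inspection for $s=\mathrm{ord}(\Frob_\p)\in\{2,4,6,8,12\}$ yields $a_2(A)(\p)=2,-2,-1,0,1$ respectively.

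\emph{Assembly.} Chebotarev applied to $\Gal(L/k)$ shows that for each $s$, the density of inert primes of $k$ with $\mathrm{ord}(\Frob_\p)=s$ is $\bar o(s)/[L:k]$. Combining with the split contribution gives the formulas for $\mu(a_1(A))$ and $\mu(a_2(A))$ stated in the corollary (with the $s=8$ contribution absorbed, since $\delta_0$ contributes only to the constant moment). Finally, the moment formulas follow by adding the moments of the split part, rewritten using $\tfrac{1}{[L:k]}=\tfrac{1}{2[L:kM]}$ and the identities $b_{m,n}=[X^n](X^2+mX+1)^n$ from Proposition~\ref{proposition:import}, to the Dirac contributions $\bar o(2)2^n+\bar o(4)(-2)^n+\bar o(6)(-1)^n+\bar o(12)$. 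The main technical point is a clean bookkeeping of the densities in the split/inert decomposition; the computation of $a_2(A)(\p)$ on inert $\p$ is purely formal once Proposition~\ref{proposition:trthetaQ} is in hand, and $a_1(A)(\p)=0$ is immediate from the induced-representation structure.
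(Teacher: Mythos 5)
Your proposal is correct and follows essentially the same route as the paper: split/inert decomposition via Chebotarev on $\Gal(kM/k)$, Proposition~\ref{proposition:import} for the split half, vanishing of $a_1(A)(\p)$ at inert primes, and Proposition~\ref{proposition:trthetaQ} combined with the inclusion $V_\ell(A)\subseteq V_\ell(E)\otimes\theta_\Q(E,A)$ to evaluate $a_2(A)(\p)$ case by case for $s\in\{2,4,6,8,12\}$. The only cosmetic difference is that you phrase the inert computation through the $\pm$-pairing of eigenvalues coming from the induced-module structure, whereas the paper phrases the same fact as divisibility of $\overline L_\p(A,T)$ into the Rankin--Selberg polynomial $\overline L_\p(E,\theta_\Q(E,A),T)$.
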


\begin{proof}
We focus on the proof of the statements about the moments, since the arguments involved suffice to deduce the statements about the measures.
Statement (i) follows from Propositions \ref{proposition:norm}, \ref{proposition:trtheta}, and ~\ref{proposition:import},  and the equality $$\M_{2n}[ a_1(A_{kM})]=2\cdot \M_{2n}[ a_1(A)]\,,$$
which follows from the fact that if $\p$ is a prime of $k$ where $A$ has good reduction and $\p$ is inert in $kM$, then $A$ is supersingular at $\p$ and $a_1(A)(\p)=0$.

For (ii), let $\nu$ denote the nontrivial conjugacy class of $\Gal(kM/k)$.
Note that
$$\M_n[a_2(A)]=\frac{1}{2}\M_n[a_2(A)\ |\ P_{\id}]\ +\ \frac{1}{2}\M_n[a_2(A)\ |\ P_{\nu}].$$
To compute $\M_n[a_2(A)\ |\ P_{\id}]=\M_n[a_2(A_{kM})]$, we apply Proposition \ref{proposition:import}.
We then claim that
$$\M_n[a_2(A_k)\ |\ P_{\nu}]=\frac{1}{[L\colon kM]}\bigl(\overline{o}(2)2^n + \overline{o}(4)(-2)^n+\overline{o}(6)(-1)^n+\overline{o}(12)\bigr)\,.$$
We may restrict to primes $\p$ of $k$ that are inert in $kM$, of absolute residue degree~1, and of good reduction for both $A$ and $E$.
The polynomial $\overline L_{\p}(A,T)$ must then be one of the five listed in (\ref{equation: five possibilities}).

We now consider the Rankin-Selberg polynomial $\overline L_{\p}(E,\theta_\Q(E,A),T)$, whose roots are all products of roots of $\overline L_{\p}(E,T)=1+T^2$, and all roots of the  polynomial  $\det(1-\theta_\Q(E,A)(\Frob_{\p})T)$.
More explicitly, if $s$ is the order of $\Frob_{\p}$ in $\Gal(L/k)$, one may apply Proposition \ref{proposition:trthetaQ} to compute $\overline L_{\p}(E,\theta_\Q(E,A),T)$.  This yields:
\vspace{-8pt}
\begin{center}
\begin{tabular}{llllllll}
$s=2$: & $(1+T^2)^4$ && $s=6$: & $(1-T^2+T^4)^2$ && $s=12$: & $(1+T^2+T^4)^2$\vspace{2pt}\\
$s=4$: & $(1-T^2)^4$ && $s=8$: & $(1+T^4)^2$\vspace{2pt}\\
\end{tabular}
\end{center}

By arguments analogous to those of \cite[theorem 3.1]{Fit10}, there is an inclusion of $\Q_\ell[G_k]$-modules
$$
V_\ell(A)\subseteq V_\ell(E)\otimes \theta_\Q(E,A)\,.
$$
This implies that $\overline L_{\p}(A,T)$ divides $\overline L_{\p}(E,\theta_\Q(E,A),T)$.
It immediately follows that $\overline L_{\mathfrak p}(A,T)$ is
\begin{center}
\begin{tabular}{llllllll}
$s=2$: & $(1+T^2)^2$ & $\,\,$ & $s=6$: & $1-T^2+T^4$ & $\,\,$ & $s=12$: & $1+T^2+T^4$\vspace{2pt}\\
$s=4$: & $(1-T^2)^2$ & $\,\,$ & $s=8$: & $1+T^4$\vspace{2pt}\\
\end{tabular}
\end{center}
Finally, we observe that the condition $\overline L_{\mathfrak p}(A,T)$ divides $\overline L_{\mathfrak p}(E,\theta_\Q(E,A),T)$ implies that $s$ can not attain any value other than the ones considered.\end{proof}

\subsection{Additional remarks}\label{subsection: additional}

As noted in the introduction, all 32 of the genus 2 Sato-Tate groups with identity component isomorphic to $\Unitary(1)$ can arise as the Sato-Tate group of an abelian variety~$A$ defined over~$k$ with $A_\Qbar\sim E^2_\Qbar$, where $E$ is an elliptic curve defined over $k$ (with CM).

However, not all 10 of the genus 2 Sato-Tate groups with identity component isomorphic to $\SU(2)$, can arise as the Sato-Tate group of an abelian variety $A$ defined over~$k$ such that $A_\Qbar\sim E^2_\Qbar$\,, where $E$ is an elliptic curve defined over~$k$ (without CM).\footnote{All Sato-Tate groups with identity component $\SU(2)$ can occur for an $A$ over $k$ such that $A_\Qbar\sim E^2_\Qbar$ for \emph{some} elliptic curve $E$, but this curve need not be defined over $k$.}
The Sato-Tate groups for which this is not true are the four whose component group contains an element of order 4 or 6.
Indeed, recall that $\theta_\Q(E,A)$ (resp.\ $\theta_\Q(A)$) is the representation afforded by $\Hom(E_L,A_L)\otimes\Q$ (resp.\ $\End(A_L)\otimes\Q$).
As in the proof of Proposition \ref{proposition:norm}, one can then show that $\theta_\Q(A)=\theta_\Q(E,A)^{\otimes 2}$, that is, $a_1(\theta_\Q(A))=a_1(\theta_\Q(E,A))^2$. But if $\tau\in\Gal(K/k)$ has order $4$ or $6$ then $a_1(\theta_\Q(A))(\tau)=2$ or $3$, which are not squares in $\Q$.

We end this section by computing the density $z_1(A_k)$ of zero traces of an abelian variety $A$ defined over $k$ such that $A_\Qbar\sim E^2_\Qbar$ for some elliptic curve~$E$ defined over $\Qbar$.

\begin{lemma} Let $A$ be an abelian variety defined over $k$ such that $A_\Qbar\sim E^2_\Qbar$\,, where $E$ is an elliptic curve defined over $\Qbar$ (not necessarily over $k$). Let $M$ denote the CM field if $E$ has CM, and let $M=\Q$ otherwise.
Then
$$
z_1(A_k)=
\begin{cases}
\frac{o(2)}{|\Gal(L/kM)|} & \text{if $[kM\colon k]=1$\,,}\\[6pt]
\frac{1}{2}+\frac{1}{2}\frac{o(2)}{|\Gal(L/kM)|} & \text{if $[kM\colon k]=2$\,.}
\end{cases}
$$
\end{lemma}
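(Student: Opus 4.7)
My proof proceeds via Chebotarev's density theorem: for each conjugacy class in $\Gal(L/k)$, I would determine when the corresponding Frobenius forces $a_1(A_k)(\p) = 0$ with positive conditional density, and sum the contributions. As a preliminary observation, $A$ being defined over $k$ forces the $\Qbar$-isogeny class of $E$ to be $G_k$-stable, hence $j(E) \in k$, so we may replace $E$ by a $k$-model in its $\Qbar$-isogeny class without affecting $L$, $K$, or $M$. This supplies the $G_k$-equivariant isomorphism $V_\ell(A) \simeq V_\ell(E) \otimes \theta_\Q(E,A)$ needed to handle the non-CM setting.

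For $[kM:k] = 2$ (which forces $E$ to have CM), the density $1/2$ of primes of $k$ inert in $kM$ automatically satisfy $a_1(A_k)(\p) = 0$: at such $\p$, $E$ reduces supersingularly by Deuring, and the argument at the end of the proof of Corollary \ref{corollary: a2 moments} shows that $\overline L_\p(A,T)$ belongs to the list (\ref{equation: five possibilities}), each of whose entries has zero $T$-coefficient. For the other density $1/2$ consisting of split primes, $a_1(A_k)(\p) = a_1(A_{kM})(\p_i)$ for any $\p_i$ of $kM$ above $\p$; applying the $[kM:k] = 1$ case to $A_{kM}$ together with Chebotarev over $kM$ gives conditional density $o(2)/[L:kM]$, which combined with the factor $1/2$ yields the stated formula.

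For $[kM:k] = 1$, the goal reduces to establishing the criterion: $a_1(A_k)(\p) = 0$ holds with positive conditional density on $\Frob_\p = \tau$ if and only if the image of $\tau$ in $\Gal(K/kM)$ has order $2$. When $E$ has CM, Proposition \ref{proposition:import} presents $\mu(a_1(A_k))$ as an average over $\tau \in \Gal(L/kM)$ of measures supported on $[-2u(\tau), 2u(\tau)]$ that are absolutely continuous when $u(\tau) > 0$ and reduce to $\delta_0$ when $u(\tau) = 0$; by Propositions \ref{proposition:norm} and \ref{proposition:trtheta}, $u(\tau)^2 = \Tr\theta_M(A)(\tau) = 2 + \zeta_{r(\tau)} + \overline\zeta_{r(\tau)}$, which vanishes if and only if $r(\tau) = 2$. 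When $E$ is non-CM, the decomposition $V_\ell(A) \simeq V_\ell(E)\otimes\theta_\Q(E,A)$ yields $a_1(A)(\p) = \pm a_1(E)(\p)\cdot \Tr\theta_\Q(E,A)(\Frob_\p)$; since the Sato--Tate measure of non-CM $E$ is continuous, $a_1(E)(\p) = 0$ contributes density zero, and positive density is confined to $\tau$ with $\Tr\theta_\Q(E,A)(\tau) = 0$. Via $\End(A_L)\otimes\Q \simeq \theta_\Q(E,A)^*\otimes\theta_\Q(E,A)$ one obtains $(\Tr\theta_\Q(E,A)(\tau))^2 = \Tr\theta_\Q(A)(\tau) = 2 + \zeta_{r(\tau)} + \overline\zeta_{r(\tau)}$ (the non-CM analog of Proposition \ref{proposition:trtheta}), so the rationality of $\Tr\theta_\Q(E,A)$ forces $r(\tau)\in\{1,2,3\}$ and the zero locus is precisely $r(\tau) = 2$. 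Chebotarev then produces the density $o(2)/[L:kM]$.

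The main obstacle I anticipate is the non-CM subcase of $[kM:k] = 1$: since the machinery of \S\ref{section: Galois modules} is developed under a CM hypothesis, I must separately verify the analog of Proposition \ref{proposition:trtheta} for $\theta_\Q(A)$ and carefully track the rationality constraint on $\Tr\theta_\Q(E,A)(\tau)$ that restricts the possible orders $r(\tau)$ to $\{1,2,3\}$ before concluding that the zero-trace criterion coincides with the CM case.
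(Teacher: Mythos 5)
Your overall strategy coincides with the paper's: the inert primes of $kM/k$ are supersingular and account for the $\tfrac{1}{2}$, and among the split primes one shows that $a_1=0$ holds (away from a set of density zero) exactly when $\Frob_\p$ has order $2$ in $\Gal(K/kM)$, after which Chebotarev finishes the proof. The genuine gap is in your preliminary reduction. From $A_\Qbar\sim E^2_\Qbar$ and ${}^\tau A=A$ one only obtains ${}^\tau E\sim_\Qbar E$ for every $\tau\in G_k$; isogenous elliptic curves need not have equal $j$-invariants, so this does \emph{not} force $j(E)\in k$, and there need not be \emph{any} curve in the $\Qbar$-isogeny class of $E$ that is defined over $k$ (Weil restrictions of non-CM $k$-curves of degree $2$ with nontrivial obstruction give counterexamples, and the footnote in \S\ref{subsection: additional} records precisely that such an $E$ ``need not be defined over $k$''). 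Moreover, replacing $E$ by a $\Qbar$-isogenous curve changes the field $L$ (compare Example \ref{example: counterexamples}), so even where a $k$-model exists the substitution is not harmless as stated (though the ratio $o(2)/|\Gal(L/kM)|$ happens to be insensitive to this, since it equals the proportion of order-$2$ elements of $\Gal(K/kM)$). Without a $k$-model of $E$, the group $G_k$ does not act on $\Hom(E_\Qbar,A_\Qbar)$, so $\theta_\Q(E,A)$ is not a representation of $\Gal(L/k)$ and your non-CM argument collapses; strictly speaking your appeal to Proposition \ref{proposition:import} in the CM case is also outside its hypotheses, since all of \S\ref{section: squares} assumes $E$ is defined over $k$.

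The repair --- and the route the paper takes --- is to phrase the split-prime criterion intrinsically in terms of $A$: for a non-supersingular split prime $\p$, the roots of $L_\p(A,T)$ are $\alpha$, $\overline\alpha$, $\zeta_r\alpha$, $\overline\zeta_r\overline\alpha$ with $\alpha/\overline\alpha$ not a root of unity, where $r$ is the order of $\Frob_\p$ in $\Gal(K/k)$ (this follows from Proposition \ref{proposition:trtheta} in the CM case and from \cite[Rem.~4.8]{FKRS} in the non-CM case, neither of which needs $E$ to be defined over $k$). Then $(1+\zeta_r)\alpha+(1+\overline\zeta_r)\overline\alpha=0$ forces $\alpha/\overline\alpha=-\overline\zeta_r$ unless $r=2$, giving the criterion directly. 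Your computations $(\Tr\theta)^2=2+\zeta_r+\overline\zeta_r$ and the conclusion ``zero trace iff $r=2$'' are correct once a legitimate foundation is in place, and your treatment of the inert primes and of the passage from $k$ to $kM$ is fine.
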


\begin{proof}
Except for a set of density zero, any prime $\p$ of $k$ that does not split in $kM$ is supersingular, in which case $a_1(A)({\p})=0$.
This gives density $0$ in the first case and density $\frac{1}{2}$ in the second case.
Among the primes that split in $kM$, we wish to show that exactly the proportion $o(2)/|\Gal(L/kM)|$ have trace $0$.
Among these primes, the density of the supersingular primes is zero.
Let $\p$ be a non-supersingular prime of good reduction for $A$ that splits in $kM$.
From Remark 4.8 in \cite{FKRS} in the non-CM case, and from Proposition \ref{proposition:trtheta} in the CM case, the roots of $L_{\p}(A,T)$ are $\alpha$, $\overline\alpha$, $\zeta_r\alpha$, $\overline\zeta_r\overline\alpha$, where $r$ is the order of $\Frob_{\p}$ in $\Gal(K/k)$ and where $\alpha/\overline\alpha$ is not a root of unity.
It follows that $\alpha+\overline\alpha+\zeta_r\alpha+\overline\zeta_r\overline\alpha=0$ if and only if $r=2$.
One then applies the Cebotarev density theorem.
\end{proof}

\section{Twists of \texorpdfstring{$y^2=x^5-x$}{} and \texorpdfstring{$y^2=x^6+1$}{}}\label{section: curves}

In this section, we strengthen  the results of \S\ref{section: squares} in the particular case that $k=\Q$ and $A\sim_\Q \Jac(C)$, where $C$ is a twist of the curve $y^2=x^5-x$ or $y^2=x^6+1$.
We first introduce some convenient notation. Let $C^0_2$ and $C^0_3$ denote the curves defined over $\Q$ by the equations
$$
C^0_2\colon y^2=x^6-5x^4-5x^2+1,\qquad C^0_3\colon y^2=x^6+1\,.
$$
The curve $C^0_2$ is a twist of $y^2=x^5-x$, as one may verify by computing their respective Igusa invariants.
As shown below, the Jacobian of $C^0_2$ is $\Q$-isogenous to the square of an elliptic curve defined over $\Q$, a property that the curve $y^2=x^5-x$ does not enjoy.
We also note that the minimal field of definition of the endomorphisms of the Jacobian of $C^0_2$ is $\Q(\sqrt{-2})$, but for $y^2=x^5-x$ it is $\Q(i,\sqrt{-2})$.

Let $E^0_2$ and $E^0_3$ denote the elliptic curves defined over $\Q$ by the equations
$$ E^0_2\colon Y^2=X^3-5X^2-5X+1\,,\qquad E^0_3\colon Y^2=X^3+1\,.$$
We note that $j(E^0_2)=2^65^3$ and $j(E^0_3)=0$, thus $E^0_2$ has CM by $\Q(\sqrt{-2})$ and $E^0_3$ has CM by $\Q(\sqrt{-3})$.

To simplify notation, throughout this section $d$ denotes either 2 or 3, and we write $C^0$ for $C^0_d$, $E^0$ for $E^0_d$, and $M$ for $\Q(\sqrt{-d})$.
We use $C$ to denote a twist of $C^0$ defined over $\Q$.
In the context of  \S\ref{section: squares}, we are specializing $A_\Qbar\sim E^2_\Qbar$ to the case where $A=\Jac(C)$ and $E=E^0$, as we now show.

\subsection{Fields of definition of isomorphisms}\label{section: fields definition}

\begin{lemma}\label{lemma:quotient}
$\Jac(C^0_d)$ is $\Q$-isogenous to $(E^0_d)^2$.
\end{lemma}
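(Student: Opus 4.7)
The plan is to exhibit $C^0_d$ as a bielliptic curve admitting two independent $\Q$-rational quotient maps to $E^0_d$, and then invoke the standard decomposition of a Jacobian under a non-hyperelliptic involution to obtain the isogeny $\Jac(C^0_d)\sim_\Q (E^0_d)^2$.

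First I would observe that both defining equations have the form $y^2=f_d(x^2)$, where $f_2(u)=u^3-5u^2-5u+1$ and $f_3(u)=u^3+1$; in particular $C^0_d$ carries the non-hyperelliptic $\Q$-rational involution $\iota\colon (x,y)\mapsto (-x,y)$, which commutes with the hyperelliptic involution $w$. Setting $(u,v)=(x^2,y)$ realizes the quotient $C^0_d/\langle \iota\rangle$ as the elliptic curve $v^2=f_d(u)$, which by construction is exactly $E^0_d$.

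Next I would build a second, independent $\Q$-rational quotient from $\iota w\colon (x,y)\mapsto (-x,-y)$. The invariants $(u,\tilde w)=(x^2,xy)$ exhibit $C^0_d/\langle \iota w\rangle$ as the curve $\tilde w^2=u f_d(u)$. Applying the $\Q$-birational substitution $(u,\tilde w)\mapsto (s,t)=(1/u,\tilde w/u^2)$ transforms this to $t^2=s^3 f_d(1/s)$. The key computation is that $f_2$ and $f_3$ are both \emph{palindromic} (with coefficient sequences $(1,-5,-5,1)$ and $(1,0,0,1)$), so $s^3 f_d(1/s)=f_d(s)$, and hence the second quotient is $\Q$-isomorphic to $E^0_d$ as well. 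This is the step I expect to require the most care: it is precisely this palindromic feature that forces the isomorphism to be defined over $\Q$ rather than merely over $\Qbar$, and is the reason we chose $C^0_2\colon y^2=x^6-5x^4-5x^2+1$ as our representative of $\mathcal C_2$ rather than $y^2=x^5-x$.

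Finally I would assemble the two Albanese morphisms into a single $\Q$-rational homomorphism $\phi\colon (E^0_d)^2\to \Jac(C^0_d)$. Since $\iota^*$ acts as $+1$ on the image of the first summand (coming from the quotient by $\iota$) and as $-1$ on the image of the second (coming from the quotient by $\iota w$, on which $w$ acts as $-1$ while $\iota w$ acts trivially), the two images lie in complementary eigenspaces of $\iota^*$ on $\Jac(C^0_d)$ and hence span complementary abelian subvarieties. Thus $\phi$ is surjective, and a dimension count shows that its kernel is finite, so $\phi$ is the desired $\Q$-isogeny.
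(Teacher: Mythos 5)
Your proof is correct, but it takes a genuinely different route from the one in the paper. The paper uses only the first quotient: from $C^0_d/\langle\alpha\rangle\simeq E^0_d$ (with $\alpha(x,y)=(-x,y)$) it deduces $\Jac(C^0_d)\sim_\Q E^0_d\times E$ for some elliptic curve $E/\Q$, and then observes that the automorphism $\gamma(x,y)=(1/x,y/x^3)$ fails to commute with $\alpha$, so $\End(\Jac(C^0_d))$ is nonabelian; since $\End(E^0_d\times E)\otimes\Q$ would be commutative if $E$ were not $\Q$-isogenous to $E^0_d$, this forces $E\sim_\Q E^0_d$. You instead identify the complementary factor directly: you compute the quotient by $\alpha w$ as $\tilde w^2=uf_d(u)$ and use the palindromic symmetry $s^3f_d(1/s)=f_d(s)$ to recognize it as $E^0_d$ over $\Q$, then assemble the isogeny from the two eigenspaces of $\alpha^*$. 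Your version is more constructive — it exhibits both elliptic quotients and the isogeny itself, and makes visible why the palindromic model of $\mathcal C_2$ was chosen — at the cost of an extra computation; the paper's argument is shorter but only indirectly pins down the second factor. Note that the palindromic property of $f_d$ is also what powers the paper's argument, since it is exactly what makes $\gamma$ a $\Q$-rational automorphism.
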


\begin{proof}
We proceed as in the proof of Lemma 4.1 in \cite{FL11}.
The quotient of $C_d^0$ by the non-hyperelliptic involution $\alpha(x,y)=(-x,y)$ is precisely the elliptic curve $E^0_d$, thus $\Jac(C_d^0)\sim_\Q E^0_d\times E$, where $E$ is also an elliptic curve defined over $\Q$.
The automorphism $\gamma(x,y)=(1/x,y/x^3)$ does not commute with $\alpha$, which implies that $\End(\Jac(C^0))$ is nonabelian, and therefore $\Jac(C_d^0)\sim_{\Q}(E^0_d)^2$.
\end{proof}

\begin{lemma}\label{lemma: automorphisms vs endomorphisms} The minimal number field over which all the automorphisms of $C_{\Qbar}$ are defined coincides with the minimal number field over which all the endomorphisms of $\Jac(C)_\Qbar$ are defined.
\end{lemma}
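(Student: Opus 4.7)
The plan is to show the two inclusions of fields separately. Let $F_1$ (resp.\ $F_2$) denote the minimal extension of $\Q$ over which every element of $\Aut(C_\Qbar)$ (resp.\ of $\End(\Jac(C)_\Qbar)$) is defined; both are Galois over $\Q$. The inclusion $F_1\subseteq F_2$ is formal: the functorial map $\iota\colon \Aut(C_\Qbar)\hookrightarrow \End(\Jac(C)_\Qbar)$ is $G_\Q$-equivariant and injective (a nontrivial automorphism of $C$ acts nontrivially on $H^0(C,\Omega^1)$, and hence on $\Jac(C)$, when $g\geq 2$), so any $\sigma\in G_{F_2}$ acts trivially on $\iota(\Aut(C_\Qbar))$, hence on $\Aut(C_\Qbar)$.

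The reverse inclusion $F_2\subseteq F_1$ reduces to the intrinsic algebraic claim that the $\Q$-subalgebra $R$ of $\End(\Jac(C)_\Qbar)\otimes\Q$ generated by $\iota(\Aut(C_\Qbar))$ equals $\End(\Jac(C)_\Qbar)\otimes\Q$. Granting this, any $\sigma\in G_{F_1}$ fixes each generator of $R$ and hence $R$ itself, so $\sigma\in G_{F_2}$. Since the claim depends only on the $\Qbar$-isomorphism class of $C$, we may take $C=C^0_d$, and by Lemma~\ref{lemma:quotient} the endomorphism algebra is $\End(\Jac(C^0_d)_\Qbar)\otimes\Q\simeq M_2(M)$ with $M=\Q(\sqrt{-d})$, a central simple $M$-algebra of $\Q$-dimension~$8$.

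I would verify the claim by showing $R$ is (a) semisimple, (b) non-commutative, and (c) contains the center $M$ of $M_2(M)$, and then invoking the classification of $M$-subalgebras of $M_2(M)$ containing $M$: up to conjugation these are $M$ itself, a $2$-dimensional commutative $M$-extension, the $3$-dimensional Borel subalgebra of upper-triangular matrices, or $M_2(M)$. Only $M_2(M)$ is both non-commutative and semisimple (the Borel contains a nilpotent off-diagonal ideal), so $R=M_2(M)$. Property~(a) is automatic, since $R$ is a quotient of the semisimple $\Q$-algebra $\Q[\Aut(C^0_{d,\Qbar})]$ by Maschke; property (b) is immediate from the non-abelianness of $\Aut(C^0_{d,\Qbar})$ (isomorphic to $2\dih 6$ for $d=3$, to $\symtilde 4$ for $d=2$) together with the injectivity of $\iota$.

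The main obstacle is property~(c). For $d=3$ it is easy: the automorphism $(x,y)\mapsto(\zeta_6 x,-y)$ of $y^2=x^6+1$ has order $6$, and its image under $\iota$ has $\Q$-minimal polynomial $T^2-T+1$, so $\Q[\iota(\alpha)]=\Q(\zeta_6)=M$ already. For $d=2$ the field $M=\Q(\sqrt{-2})$ is not cyclotomic, so no single cyclotomic generator suffices; one instead exploits that $\symtilde 4$ contains elements of order~$8$, whose image under $\iota$ acts on the $4$-dimensional $\Q$-space $H_1(\Jac(C^0_2)_\C^{\topo},\Q)$ with minimal polynomial $\Phi_8(T)=T^4+1$, generating an abstract $\Q(\zeta_8)\supset\Q(\sqrt{-2})$. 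It remains to check that this embedded $\Q(\sqrt{-2})$ coincides with the center $M$ of $M_2(M)$: if instead $\Q(\zeta_8)\cap M=\Q$, then by linear disjointness the subalgebra $\Q(\zeta_8)\cdot M$ would be isomorphic to $\Q(\zeta_8)\otimes_\Q M\cong\Q(\zeta_8)\oplus\Q(\zeta_8)$, a $\Q$-algebra of dimension $8$ that by dimension count would have to equal the simple algebra $M_2(M)$, a contradiction.
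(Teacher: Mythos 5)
Your argument takes a genuinely different route from the paper's. The paper works over the field $K_a$ of definition of the automorphisms: since $\Aut(C_{K_a})$ is nonabelian and contains a non-hyperelliptic involution, $\Jac(C)_{K_a}\sim E^2$ with $E/K_a$ having CM by $M$, whence $K_e=K_aM$; it then quotes Cardona's results that $M\subseteq K_a$. Your reduction to the purely algebraic statement that $\iota(\Aut(C_\Qbar))$ generates $\End(\Jac(C)_\Qbar)\otimes\Q\simeq M_2(M)$ as a $\Q$-algebra is correct, and would make the lemma self-contained; the easy inclusion, the semisimplicity and noncommutativity steps, and the $d=2$ case of step (c) are all sound.

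The gap is in step (c) for $d=3$. The automorphism $\alpha(x,y)=(\zeta_6x,-y)$ pulls back $dx/y$ and $x\,dx/y$ to $-\zeta_6\,dx/y=\zeta_3^2\,dx/y$ and $-\zeta_6^2\,x\,dx/y=\zeta_6^5\,x\,dx/y$, so its eigenvalues on $H_1(\Jac(C^0_3)_\C^{\topo},\Q)\otimes\C$ are $\zeta_3^{\pm1},\zeta_6^{\pm1}$ and its $\Q$-minimal polynomial is $\Phi_3\Phi_6=T^4+T^2+1$, not $T^2-T+1$; thus $\Q[\iota(\alpha)]\simeq\Q(\zeta_3)\times\Q(\zeta_6)$ has dimension $4$. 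More seriously, even an automorphism $\beta$ that genuinely has minimal polynomial $T^2-T+1$ (e.g.\ $\beta(x,y)=(\zeta_3x,-y)$, class $6c$ of Table \ref{table: charactertable3}) yields a subfield $\Q[\iota(\beta)]$ merely \emph{isomorphic} to $M$, not equal to the center: a central element of $M_2(M)$ acts on $H^0(C,\Omega^1)$ by a scalar, whereas $\iota(\beta)$ has the two distinct eigenvalues $\zeta_6,\zeta_6^5$ there (equivalently, an automorphism landing in the center would commute with all automorphisms, and $Z(\Aut((C^0_3)_\Qbar))=\langle w\rangle$ maps to $\pm1$). So ``$\Q[\iota(\alpha)]=\Q(\zeta_6)=M$ already'' conflates abstract isomorphism with equality to the center --- exactly the pitfall you correctly identify and circumvent for $d=2$. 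The repair is immediate: for the order-$6$ element $\alpha$ above, $\Q[\iota(\alpha)]\cdot M$ is a commutative subalgebra of $M_2(M)$ containing the center, hence of $M$-dimension at most $2$, i.e.\ $\Q$-dimension at most $4$; since it contains the $4$-dimensional algebra $\Q[\iota(\alpha)]$, we get $M\subseteq\Q[\iota(\alpha)]\subseteq R$, which is the same dimension count you already use for $d=2$.
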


\begin{proof} Let $K_a$ (resp.\ $K_e$) denote the minimal number field over which all the automorphisms of $C_\Qbar$ (resp.\ all the endomorphisms of $\Jac(C)_\Qbar$) are defined.
The fact that $\Aut(C_{K_a})$ is nonabelian and contains a non-hyperelliptic involution implies that $\Jac(C)_{K_a}\sim E^2$, where $E$ is an elliptic curve defined over $K_a$.
Since $E$ has CM by $M$, it follows that $K_e=K_aM$. But \cite[Prop.\ 7.3.1]{Car01} asserts that $M=\Q(\sqrt{-3})$ is already contained in $K_a$ if $C$ is a twist of $C^0_3$, whereas \cite[Prop.\ 8]{Car06} states that $M=\Q(\sqrt{-2})$ is already contained in $K_a$ if $C$ is a twist of $C^0_2$.
\end{proof}

We use $K$ to denote the field given by Lemma \ref{lemma: automorphisms vs endomorphisms}.  We note that $K$ is a Galois extension of $\Q$, and we have $M\subseteq K$, with equality in the case $C=C^0$.

\begin{lemma}\label{lemma:equiv}
Let $\phi$ be an isomorphism from $C^0_\Qbar$ to $C_\Qbar$.  The following number fields coincide:
\begin{enumerate}[\rm (i)]
\item the minimal field over which all isomorphisms from $C^0_\Qbar$ to $C_\Qbar$ are defined;
\item the compositum of $K$ (or even just $M$) and the minimal field $L_\phi$ over which $\phi$ is defined;
\item the minimal field over which all homomorphisms from $\Jac(C^0)_\Qbar$ to $\Jac(C)_\Qbar$  are defined;
\item the minimal field over which all  homomorphisms from $E^0_\Qbar$ to $\Jac(C)_\Qbar$  are defined.
\end{enumerate}
\end{lemma}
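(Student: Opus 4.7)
The plan is to show that all four fields coincide with the compositum $L_\phi M$. Write $F_i$ for the field in part $(i)$.

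First, $F_1 = F_2 = L_\phi M$. By Lemma~\ref{lemma:quotient}, $\Jac(C^0)\sim_\Q (E^0)^2$, so the endomorphism algebra $\End(\Jac(C^0)_\Qbar)\otimes\Q \simeq \M_2(M)$ is defined over $M$; Lemma~\ref{lemma: automorphisms vs endomorphisms} applied to $C^0$ then shows that $\Aut(C^0_\Qbar)$ is defined over exactly $M$. Every isomorphism $C^0_\Qbar\to C_\Qbar$ has the form $\phi\circ\alpha$ with $\alpha\in\Aut(C^0_\Qbar)$, which gives $F_1 = L_\phi M$. Likewise every element of $\Aut(C_\Qbar)$ has the form $\phi\alpha\phi^{-1}$, hence $K\subseteq L_\phi M$, and since $M\subseteq K$ we conclude $F_2 = L_\phi K = L_\phi M$.

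Next, $F_3 = L_\phi M$. Let $\phi_*\colon \Jac(C^0)_\Qbar\to\Jac(C)_\Qbar$ be the isomorphism induced by $\phi$; its minimal field of definition is $L_\phi$. Every $h\in\Hom(\Jac(C^0)_\Qbar,\Jac(C)_\Qbar)\otimes\Q$ can be written uniquely as $\phi_*\circ\beta$ with $\beta\in\End(\Jac(C^0)_\Qbar)\otimes\Q$, and the latter algebra is defined over $M$, so $F_3\subseteq L_\phi M$. Conversely, $F_3\supseteq L_\phi$ (take $\beta=\id$), and any $\tau\in\Gal(\Qbar/F_3)$ fixes both $\phi_*$ and $\phi_*\circ[\sqrt{-d}]_{\Jac(C^0)}$; cancelling $\phi_*$ in the isogeny category yields ${}^\tau[\sqrt{-d}]_{\Jac(C^0)} = [\sqrt{-d}]_{\Jac(C^0)}$, forcing $\tau|_M=\id$. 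Hence $M\subseteq F_3$.

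Finally, $F_3=F_4$: the $\Q$-defined isogeny $(E^0)^2\to\Jac(C^0)$ furnished by Lemma~\ref{lemma:quotient} induces a $\Gal(\Qbar/\Q)$-equivariant isomorphism
$$
\Hom(\Jac(C^0)_\Qbar,\Jac(C)_\Qbar)\otimes\Q \;\simeq\; \Hom(E^0_\Qbar,\Jac(C)_\Qbar)^2\otimes\Q,
$$
so the two sides have the same minimal field of definition, and therefore $F_4 = F_3$.

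The main obstacle is the inclusion $M\subseteq F_3$: this cannot be extracted from $\phi_*$ alone (one cannot distinguish $M$ from its Galois conjugate using only a single homomorphism), and the argument really requires the nontrivial CM endomorphism $[\sqrt{-d}]$ of $\Jac(C^0)$ supplied by Lemma~\ref{lemma:quotient}, seen through the composite $\phi_*\circ[\sqrt{-d}]$. Once that is handled, the remainder is a routine bookkeeping of fields of definition in the isogeny category.
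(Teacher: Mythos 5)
Your proof is correct and follows essentially the same route as the paper's: identify every field with $L_\phi M=L_\phi K$ by writing arbitrary isomorphisms/homomorphisms as compositions of $\phi$ (or $\phi_*$) with automorphisms or endomorphisms defined over $M$ (resp.\ $K$), and pass between (iii) and (iv) via the $\Q$-isogeny $\Jac(C^0)\sim (E^0)^2$. The only (harmless) variation is how $M$ gets into the field of (iii): the paper deduces it from the containment of field (i) in field (iii), i.e.\ from the fact that curve isomorphisms are recovered from their induced maps on Jacobians, whereas you extract it from the CM endomorphism $[\sqrt{-d}]$ composed with $\phi_*$ --- both arguments work (and note you still need the Torelli-type fact to get $L_\phi$ itself inside field (iii), just as the paper does).
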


\begin{proof} Let $L_{1}$, $L_{2}$, $L_{3}$, and $L_{4}$ denote the fields defined by (i), (ii), (iii), and (iv), respectively.
Any isomorphism $\psi$ from $C^0_\Qbar$ to $C_\Qbar$ can be written as $\psi=\alpha\circ\phi$ and $\phi\circ\alpha^0$ for some $\alpha\in\Aut(C_K)$ and some $\alpha^0\in\Aut(C^0_{M})$.
This implies that $L_{1}\subseteq ML_\phi \subseteq KL_\phi = L_{2}$.
Conversely, for any $\alpha^0\in\Aut(C^0_{M})$ and $\alpha\in\Aut(C_{K})$, the compositions $\alpha\circ\phi$ and $\phi\circ \alpha^0$ are isomorphisms from $C^0_\Qbar$ to $C_\Qbar$.
It follows that $L_{2}\subseteq L_{1}$.
Thus we have shown $L_1=ML_\phi=KL_\phi=L_2$.

The isomorphism from $C^0_{L_\phi}$ to $C_{L_\phi}$ induces an isogeny $\Jac(C^0)_{L_\phi}\sim\Jac(C)_{L_\phi}$, which we also denote $\phi$.
Any homomorphism from $\Jac(C^0)_\Qbar$ to $\Jac(C)_\Qbar$ can be written as $\psi\circ\phi$ for some $\psi\in\End(\Jac(C)_\Qbar)\otimes\Q$.
This implies that $L_{3}\subseteq L_{\phi}K, L_\phi M=L_{2}$.
Conversely, it is clear that $L_1$ is contained in $L_{3}$.

Any endomorphism $\phi$ from $\Jac(C^0)_\Qbar$ to $\Jac(C)_\Qbar$ can be written as $\phi_2\circ\phi_1$, where $\phi_1\in\Hom(\Jac(C^0),(E^0)^2)\otimes\Q$ and $\phi_2\in (\Hom(E^0_{L_{4}},\Jac(C)_{L_4})\otimes\Q)^2$. Thus $L_3\subseteq L_4$.
Conversely, any homomorphism from $E_{\Qbar}^0$ to $\Jac(C)_\Qbar$ can be written as $\phi_2\circ\phi_1$, where $\phi_1\in\Hom(E^0,\Jac(C^0))\otimes\Q$ and $\phi_2$ is an element of $\Hom(\Jac(C^0)_{L_{3}},\Jac(C)_{L_{3}})\otimes\Q\,.$  Thus $L_4\subseteq L_3$.
\end{proof}

We use $L$ to denote the field given by Lemma \ref{lemma:equiv}, and we note that $L$ is a Galois extension of $\Q$ that contains $K$.

\begin{remark}
If $A$ is the abelian three-fold $E^0\times \Jac(C)$, we observe that $L$ coincides with the minimal field over which all the endomorphisms of $A_\Qbar$ are defined.  It follows that the component group of $\ST(A)$ is isomorphic to $\Gal(L/\Q)$.
\end{remark}

\subsection{The Galois module \texorpdfstring{$\Hom(E^0_L,\Jac(C)_L)$}{}}\label{section: representations twists}

In this subsection we compute $\theta_{M,\sigma}(E^0,\Jac(C))$, strengthening Lemma \ref{lemma: trans coeff} in the case where $A\sim_\Q\Jac(C)$.
We take advantage of the following fact: the group $\Gal(L/\Q)$ is isomorphic to a subgroup of $G_{C^0}:=\Aut(C^0_M)\rtimes \Gal(M/\Q)$. Here the action of $\Gal(M/\Q)$ on $\Aut(C^0_M)$ is the natural one (see \cite[\S2]{FL11}).

More precisely, let $\phi\colon C_L\rightarrow C^0_L$ be an isomorphism.
Then
$$\lambda_\phi\colon \Gal(L/\Q)\hookrightarrow G_{C^0}\,\qquad \lambda_{\phi}(\sigma)=(\phi({}^\sigma\phi)^{-1},\pi_{L/M}(\sigma))\,$$
is a monomorphism of groups, where $\pi_{L/M}\colon\Gal(L/\Q)\rightarrow \Gal(M/\Q)$ is the natural projection, as in \cite[Lem.\ 2.1]{FL11}.
Now let
$$\Res^\Q_M\lambda_{\phi}\colon\Gal(L/M)\hookrightarrow \Aut(C^0_M)$$
be the restriction of $\lambda_\phi$ at $\Gal(L/M)$. Consider the $2$-dimensional $M$-rational representation
$$\theta_{E^0,C^0}\colon\Aut(C^0_M)\rightarrow\Aut_\Qbar( \Hom(E^0_M,\Jac(C^0)_M)\otimes_{M,\sigma}\Qbar)$$
defined by $\theta_{E^0,C^0}(\alpha)(\psi)=\alpha\circ\psi$.
As in \cite[Thm.\ 2.1]{FL11}, one then has
\begin{equation}\label{equation: iso theta}
\theta_{E^0,C^0}\circ\Res^\Q_M\lambda_\phi\simeq \theta_{M,\sigma}(E^0,\Jac(C))\,,
\end{equation}
where $\theta_{M,\sigma}(E^0,\Jac(C))$ is the representation of $\Gal(L/M)$ in Definition~\ref{definition: thetaMsigma}.

\begin{lemma}\label{lemma: twisting rep} Let $C$ be a twist of $C^0$. Then:
$$
\Tr \theta_{E^0,C^0}=\begin{cases}
\chi_4 \text{ or } \chi_5 &\text{ if $C^0=C^0_2$ (see Table \ref{table: charactertable2}).}\\
\chi_8 \text{ or } \chi_9 &\text{ if $C^0=C^0_3$ (see Table \ref{table: charactertable3}),}
\end{cases}
$$
\end{lemma}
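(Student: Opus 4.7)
The plan is to compute the character of the $2$-dimensional representation $\theta_{E^0,C^0}$ on each conjugacy class of $\Aut(C^0_M)$ and match the resulting values against the irreducible characters tabulated in Tables \ref{table: charactertable2} and \ref{table: charactertable3}. First I would verify that the representation space has dimension~$2$: by Lemma \ref{lemma:quotient} we have $\Jac(C^0)\sim_\Q (E^0)^2$, so $\Hom(E^0,\Jac(C^0))\otimes \Q$ is free of rank~$2$ over $\End(E^0)\otimes\Q = M$, and base-changing along $\sigma\colon M\hookrightarrow \Qbar$ yields a $\Qbar$-vector space of dimension~$2$. Recalling from \S\ref{section: 2 moduli points} that $\Aut(C^0_M)$ is $\symtilde 4$ when $C^0=C^0_2$ and $2\dih 6$ when $C^0=C^0_3$, each of these groups possesses only a small number of irreducible $2$-dimensional characters, and they come in complex-conjugate pairs; the pair $\{\chi_4,\chi_5\}$ (resp.\ $\{\chi_8,\chi_9\}$) of the lemma will turn out to be exactly such a conjugate pair, the two possibilities arising because switching $\sigma$ for $\overline\sigma$ replaces $\theta_{M,\sigma}(E^0,\Jac(C^0))$ by its Galois conjugate $\theta_{M,\overline\sigma}(E^0,\Jac(C^0))$.

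Second I would compute $\Tr\theta_{E^0,C^0}(\alpha)$ on a representative $\alpha$ of each conjugacy class of $\Aut(C^0_M)$. To this end I would fix an explicit $\Q$-isogeny $\Jac(C^0)\sim (E^0)^2$ obtained from the two elliptic quotients produced in the proof of Lemma \ref{lemma:quotient}, and identify $\Hom(E^0_M,\Jac(C^0)_M)\otimes\Q$ with $M^{\oplus 2}$ as a left $\End(E^0)=M$-module. Every automorphism $\alpha$ then acts on $M^{\oplus 2}$ by left multiplication by an explicit matrix in $M_2(M)$, which can be read off once one knows the action of the standard generators of $\Aut(C^0_M)$ (the hyperelliptic involution $w$, the cyclic automorphism $(x,y)\mapsto(\zeta_n x,y)$ of order $n=5$ or $6$, and the extra involution $(x,y)\mapsto(1/x,y/x^3)$ or its analogue) on the two chosen elliptic quotients. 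Comparing the resulting character values with the $2$-dimensional rows of the character tables will single out precisely the claimed conjugate pairs.

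The main obstacle is the explicit generator-level calculation in the second step: one must track the interaction between the non-abelian action of $\Aut(C^0_M)$ on the two elliptic factors (including how it permutes or twists them) and the CM action of $M$ on each factor, being careful about the Galois action of $\Gal(M/\Q)$ on $\Aut(C^0_M)$ when writing everything $M$-linearly. Once this bookkeeping is done for a generating set, the character values on the remaining conjugacy classes follow from group-theoretic relations, and the final identification with $\chi_4,\chi_5$ (resp.\ $\chi_8,\chi_9$) is immediate.
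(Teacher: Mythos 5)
Your outline would eventually reach the right answer, but it takes a much more laborious route than the paper, and the decisive step is left undone. The paper's proof is essentially a one-line abstract argument: $\theta_{E^0,C^0}$ is by construction a degree-$2$, $M$-rational representation of $\Aut(C^0_M)$, and it is \emph{faithful}; a glance at Tables \ref{table: charactertable2} and \ref{table: charactertable3} shows that the only faithful degree-$2$ representations with character values in $M$ are those with trace $\chi_4$ or $\chi_5$ (resp.\ $\chi_8$ or $\chi_9$) --- any sum of two $1$-dimensional characters kills the derived subgroup, and the remaining $2$-dimensional irreducibles ($\chi_3$, resp.\ $\chi_5,\chi_6,\chi_7$) all have nontrivial kernel --- with the residual ambiguity corresponding, as you correctly observe, to the choice of $\sigma$ versus $\overline\sigma$. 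Your plan instead fixes an explicit isogeny $\Jac(C^0)\sim (E^0)^2$ and computes the matrix of each generator of $\Aut(C^0_M)$ acting on $\Hom(E^0_M,\Jac(C^0)_M)\otimes\Q\simeq M^{\oplus 2}$. That computation is feasible (and matrices of this kind appear later in the proof of Lemma \ref{lemma: specify groups}), but you acknowledge it as the ``main obstacle'' and do not carry it out, so as written the proposal is a plan rather than a proof. The observation you are missing is that faithfulness makes the entire generator-level bookkeeping unnecessary: once you note that an automorphism acting trivially on $\Hom(E^0_M,\Jac(C^0)_M)$ must be the identity, the character table alone forces the conclusion. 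If you do pursue the explicit computation, it buys you something the paper's argument does not --- namely which of the two conjugate characters corresponds to which embedding $\sigma$ --- but the lemma does not require that extra precision.
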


\begin{proof} A glance at the character table of $\Aut(C^0_M)$ (see Tables \ref{table: charactertable2} and  \ref{table: charactertable3} in \S\ref{section: tables}) tells us that any $M$-rational faithful representation of degree $2$ 
must have trace $\chi_4$ or $\chi_5$ when $C^0=C^0_2$, or trace $\chi_8$ or $\chi_9$ when $C^0=C^0_3$. The two possibilities in each case correspond to the two different embeddings of $M$ into $\Qbar$.
\end{proof}

\begin{proposition}\label{proposition: degree}
The index of $K$ in $L$ is at most $2$.
\end{proposition}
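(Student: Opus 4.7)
The plan is to identify $\Gal(L/K)$ as the kernel of a suitable representation of $\Gal(L/M)$, and then to use the faithfulness of the $M$-rational two-dimensional representation of $\Aut(C^0_M)$ produced in Lemma~\ref{lemma: twisting rep} to force this kernel into the center of $\Aut(C^0_M)$, which has order $2$.

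More concretely, set $\theta:=\theta_{M,\sigma}(E^0,\Jac(C))$. Since $M\subseteq K\subseteq L$ and $K$ is the minimal field over which all endomorphisms of $\Jac(C)_\Qbar$ are defined, an element $g\in\Gal(L/M)$ lies in $\Gal(L/K)$ if and only if $g$ acts trivially on $\End(\Jac(C)_L)\otimes_{M,\sigma}\Qbar$. By Proposition~\ref{proposition:norm} this module is isomorphic as a $\Qbar[\Gal(L/M)]$-module to $\theta^*\otimes\theta$, so
$$
\Gal(L/K) \;=\; \Ker(\theta^*\otimes\theta)\;=\;\theta^{-1}(\text{scalars in the image of }\theta)\,,
$$
the last equality because $\theta(g)\otimes\theta(g)^{-1}$ acts trivially on $V\otimes V^*$ precisely when $\theta(g)$ is central in $\GL_2(\Qbar)$.

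Next I would invoke the identification (\ref{equation: iso theta}), which exhibits $\theta$ as the composition $\theta_{E^0,C^0}\circ\Res^\Q_M\lambda_\phi$, where $\Res^\Q_M\lambda_\phi$ is the injection $\Gal(L/M)\hookrightarrow\Aut(C^0_M)$ and $\theta_{E^0,C^0}$ is the two-dimensional faithful $M$-rational representation singled out in Lemma~\ref{lemma: twisting rep}. Any element of $\Aut(C^0_M)$ sent to a scalar matrix by a \emph{faithful} representation must be central: indeed, a scalar commutes with every matrix, so by faithfulness the element commutes with every element of $\Aut(C^0_M)$. Hence
$$
\Gal(L/K)\;\hookrightarrow\;Z(\Aut(C^0_M))\,.
$$

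The final step is to compute this center. For $C^0=C^0_2$ we have $\Aut(C^0_M)\simeq \symtilde 4$, and for $C^0=C^0_3$ we have $\Aut(C^0_M)\simeq 2\dih 6$; both are central double covers whose center is generated by the hyperelliptic involution $w$ and has order~$2$. (Equivalently, the central element of order $2$ is sent to $-I$ under any faithful two-dimensional representation, and these are the only scalars that can occur in the image, since any scalar in $\GL_2$ of finite order $2$ arising from a central involution must be $\pm I$.) Thus $|\Gal(L/K)|\leq 2$, as desired.

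The only step that requires care is verifying that in the two exotic cases the center of $\Aut(C^0_M)$ is generated by the hyperelliptic involution and has order exactly $2$; this is standard for $\symtilde 4$ and $2\dih 6$, and can also be read off the character tables (Tables~\ref{table: charactertable2} and~\ref{table: charactertable3}) referenced in Lemma~\ref{lemma: twisting rep}, where the characters $\chi_4,\chi_5$ (resp.\ $\chi_8,\chi_9$) each take the value $-2$ on a single nontrivial conjugacy class, corresponding to the unique nontrivial central element.
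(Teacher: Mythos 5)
Your proof is correct, and it takes a genuinely different route from the paper's. The paper reduces to the theory of twists of elliptic curves: it forms the quotient $E=C_K/\langle\phi^{-1}\alpha\phi\rangle$ by a non-hyperelliptic involution, observes that $L$ is the compositum of $K$ with the field of definition of the induced isomorphism $\tilde\phi\colon E_L\to E^0_L$, and then splits into cases --- for $C^0_2$ it invokes the fact that an isomorphism between elliptic curves with $j\neq 0,1728$ is defined over a quadratic extension, while for $C^0_3$ (where $j=0$ and a sextic twist is a priori possible) it rules out a cubic subextension of $L/K$ by a Tate-module argument comparing the decompositions $V_\sigma(\Jac(C))\simeq(\chi\otimes V_\sigma(E^0))\oplus(\overline\chi\otimes V_\sigma(E^0))$ and $V_\sigma(\Jac(C))\simeq V_\sigma(E)^{2\oplus}$. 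Your argument instead identifies $\Gal(L/K)$ with the set of elements of $\Gal(L/M)$ whose image under $\theta$ is scalar (via Proposition \ref{proposition:norm}), pushes this through the faithful embedding into $\Aut(C^0_M)$ furnished by (\ref{equation: iso theta}) and Lemma \ref{lemma: twisting rep}, and concludes that $\Gal(L/K)$ injects into $Z(\Aut(C^0_M))$, which has order $2$ (visible from the two singleton conjugacy classes in Tables \ref{table: charactertable2} and \ref{table: charactertable3}). Your approach is shorter, uniform in $d$ (no case split on the $j$-invariant of the elliptic quotient), and makes transparent why the bound fails for the abelian surfaces of Example \ref{example: counterexamples}, where the relevant representation is not a faithful representation of the full group $\Aut(C^0_M)$. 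What the paper's more geometric route buys is the explicit description $L=K(\gamma)$ with $\gamma$ computable from Weierstrass coefficients, which is exactly what is needed later in \S\ref{section: computationKL} to compute $L$ in practice, and the failed sextic-twist scenario it analyzes is precisely the one revisited in Example \ref{example: counterexamples}. The one point worth making explicit in your write-up is the faithfulness of $\theta_{E^0,C^0}$ on $\Aut(C^0_M)$: it is asserted implicitly in the proof of Lemma \ref{lemma: twisting rep}, and follows because the post-composition action on $\Hom(E^0_M,\Jac(C^0)_M)\otimes\Q$ is $M$-linear and an automorphism acting trivially on the Jacobian is the identity.
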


\begin{proof} As in Lemma \ref{lemma:quotient}, let $\alpha$ be the non-hyperelliptic involution $\alpha(x,y)=(-x,y)$ of $C^0$.
Let $E$ be the elliptic curve $C_K/\langle \phi^{-1}\alpha\phi\rangle$ defined over $K$ (note that $ \phi^{-1}\alpha\phi$ is an automorphism of $C$, all of which are defined over $K$).
The isomorphism $\phi\colon C_L\rightarrow C^0_L$ induces an isomorphism $\tilde\phi\colon E_L \rightarrow E^0_L$. Thus $E$ is a $K$-twist of $E^0$.
From characterization (iii) of $L$ in Lemma \ref{lemma:equiv}, it is clear that $L$ is the compositum of $K$ and the minimal field $L_{\tilde\phi}$ over which $\tilde\phi$ is defined.

When $C^0=C^0_2$, we have $j(E)\ne 0, 1728$, and by \cite[p.\ 304]{Sil09} it follows that $\tilde\phi$ is then defined over a quadratic extension of $K$ and $[L:K]\le 2$.
When $C^0=C^0_3$, we have $j(E)=0$, and in this case $L=K(\sqrt[6]{\gamma})$, for some $\gamma\in K$.
Let $L_0=K(\sqrt\gamma)$.  It suffices to show that $\sqrt[3]\gamma\in L_0$.

Suppose for the sake of contradiction that $\sqrt[3]\gamma\not\in L_0$.
Then $\Gal(L/L_0)\simeq \cyc 3$.
Lemma \ref{lemma: twisting rep} then implies that if $\tau$ is a nontrivial element of $\Gal(L/L_0)$, then $\Tr\theta_{M,\sigma}(E^0,\Jac(C))(\tau)=-1$.
Therefore, the restriction of the representation afforded by the $\Gal(L/M)$-module $\Hom(E^0_L,\Jac(C)_L)\otimes_{M,\sigma}\Qbar$ to $\Gal(L/L_0)$ is
$$\Res^M_{L_0}\theta_{M,\sigma}(E^0,\Jac(C))\simeq \chi \oplus \overline\chi\,,$$
where $\chi$ is any of the two nontrivial characters of $\Gal(L/L_0)$.
As in \cite[Thm.\ 3.1]{Fit10}, we have
$$\Res_{L_0}^M\theta_{M,\sigma}(E^0,\Jac(C))\otimes V_\sigma(E^0)\simeq V_\sigma(\Jac(C)),$$
as $\Qbar_\ell[G_{L_0}]$-modules.
This implies that
\begin{equation}\label{equation: dec twist1}
V_\sigma(\Jac(C))\simeq \bigl(\chi\otimes V_\sigma(E^0)\bigr)\oplus \bigl(\overline\chi \otimes V_\sigma(E^0)\bigr)\,,
\end{equation}
as $\Qbar_\ell[G_{L_0}]$-modules.
However, as seen in Lemma \ref{lemma: automorphisms vs endomorphisms}, $\Jac(C)_{L_0}\sim E_{L_0}^2$, which implies the following isomorphism of $\Qbar_\ell[G_{L_0}]$-modules:
\begin{equation}\label{equation: dec twist 2}
V_\sigma(\Jac(C))\simeq V_\sigma(E)^{2\oplus}\,.
\end{equation}
But now (\ref{equation: dec twist1}) and (\ref{equation: dec twist 2}) together imply $V_\sigma(E^0)\simeq \chi\otimes V_\sigma(E^0)$, which is impossible.
(We remark that if  $\Res^M_{L_0}\theta_{M,\sigma}(E^0,\Jac(C))\simeq \chi ^{2\oplus}$, one does not reach a contradiction; see Example \ref{example: counterexamples}).
\end{proof}

\begin{proposition}\label{proposition: hyperprop} Let $w$ be the hyperelliptic involution of $C^0$. Then $[L\colon K]=2$ if and only if $(w,\id)\in G_{C^0}$ lies in the image of $\lambda_\phi$. If $[L\colon K]=2$, then the preimage of $(w,\id)$ by $\lambda_\phi$ is the nontrivial element $\omega$ of $\Gal(L/K)$.
\end{proposition}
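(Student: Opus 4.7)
The plan is to study the restriction of $\lambda_\phi$ to $\Gal(L/K) \subseteq \Gal(L/M)$, show that its image is contained in $\{(\id,\id),(w,\id)\}$, and then use injectivity of $\lambda_\phi$ together with Lemma~\ref{lemma:equiv} to identify the nontrivial case with $(w,\id)$.

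First I would note that for any $\sigma \in \Gal(L/\Q)$, the second coordinate of $\lambda_\phi(\sigma)$ equals $\id$ iff $\sigma \in \Gal(L/M)$; in particular any preimage of $(w,\id)$ lies in $\Gal(L/M)$. The heart of the argument is the following claim: for $\sigma \in \Gal(L/K)$, the automorphism $\alpha := \phi({}^\sigma\phi)^{-1}$ lies in the center of the algebra $\End(\Jac(C^0)_\Qbar) \otimes \Q \simeq M_2(M)$. To prove this, take an arbitrary $\gamma \in \End(\Jac(C^0)_\Qbar) \otimes \Q$, which is defined over $M \subseteq L^\sigma$. Then $\phi^{-1}\gamma\phi$ is an endomorphism of $\Jac(C)_L$ and is therefore fixed by $\sigma \in \Gal(L/K)$. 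Expanding
$$({}^\sigma\phi)^{-1}\gamma({}^\sigma\phi) \,=\, {}^\sigma(\phi^{-1}\gamma\phi) \,=\, \phi^{-1}\gamma\phi$$
and rearranging yields $\alpha^{-1}\gamma = \gamma\alpha^{-1}$, so $\alpha$ commutes with all of $M_2(M)$.

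Since $|\Gal(L/K)| \le 2$ by Proposition~\ref{proposition: degree}, we have $\sigma^2 = \id$ and hence $\alpha^2 = \id$; combined with $\alpha$ acting on $\Jac(C^0)$ as a scalar $\lambda \in M^*$, this forces $\lambda = \pm 1$. The hyperelliptic Torelli property---the kernel of $\Aut(C^0_M) \to \Aut(\Jac(C^0)_M)$ is $\langle w\rangle$, with $w$ acting as $-1$ on the Jacobian---then pins $\alpha$ down to $\{\id, w\}$. Thus $\lambda_\phi(\Gal(L/K)) \subseteq \{(\id,\id),(w,\id)\}$. If $[L:K]=2$, injectivity of $\lambda_\phi$ forces the nontrivial element $\omega$ to map to $(w,\id)$: the alternative $\alpha = \id$ would give $\phi = {}^\omega\phi$, forcing $\phi$ to be defined over $K = L^\omega$ and contradicting characterization (ii) of $L$ in Lemma~\ref{lemma:equiv}. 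This settles the direction $[L:K]=2 \Rightarrow (w,\id) \in \im(\lambda_\phi)$ and simultaneously the explicit identification of $\lambda_\phi(\omega)$.

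For the converse, suppose $\lambda_\phi(\sigma) = (w,\id)$ for some $\sigma$. Then $\sigma \in \Gal(L/M)$ and ${}^\sigma\phi = w^{-1}\phi$. Writing any endomorphism $\beta$ of $\Jac(C)_L$ as $\phi^{-1}\gamma\phi$ with $\gamma \in M_2(M)$, and using that $\gamma$ is $\sigma$-fixed while $w$ acts centrally as $-1$ on $\Jac(C^0)$, I compute ${}^\sigma\beta = \phi^{-1}w\gamma w^{-1}\phi = \beta$. Hence $\sigma \in \Gal(L/K)$, and since $\sigma \neq \id$ we conclude $[L:K] = 2$. The main subtlety will be justifying that a central element of $\Aut(C^0_M)$ of order dividing $2$ must equal $\id$ or $w$: the order constraint $\alpha^2 = 1$ excludes the non-real sixth roots of unity which live in $\Q(\sqrt{-3})^*$ (the potentially worrying case), and the hyperelliptic Torelli theorem rules out any further possibilities.
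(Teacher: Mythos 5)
Your proof is correct, but it takes a genuinely different route from the paper's. For the forward direction the paper works with the $2$-dimensional representation $\theta_{E^0,C^0}$ on $\Hom(E^0_L,\Jac(C)_L)\otimes_{M,\sigma}\Qbar$: using $\Jac(C)_K\sim E^2$ it builds an explicit basis $\psi_2\circ\iota_i\circ\psi_1$, computes that the nontrivial element $\omega$ of $\Gal(L/K)$ has trace $-2$, and then reads off from the character tables (Tables \ref{table: charactertable2} and \ref{table: charactertable3}) that $w$ is the unique automorphism with that trace; for the converse it invokes the commutative diagram relating $\lambda_\phi$ to the quotient map $\overline\lambda_\phi$ into $G_{C^0}/\langle(w,\id)\rangle$. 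You instead exploit $\End(\Jac(C^0)_\Qbar)\otimes\Q\simeq\M_2(M)$ directly: the conjugation identity shows the cocycle value on $\Gal(L/K)$ is central, hence a scalar in $M^*$, the cocycle relation plus $\sigma^2=\id$ forces that scalar to be $\pm1$, and Torelli pins it to $\{\id,w\}$; your converse is a clean direct check that any preimage of $(w,\id)$ acts trivially on $\End(\Jac(C)_L)$ and hence lies in $\Gal(L/K)\setminus\{\id\}$. Your argument is more self-contained (no character tables, no explicit basis), while the paper's trace computation is the shape of argument it reuses throughout \S\ref{section: representations twists}. One phrasing slip worth fixing: for a hyperelliptic curve the map $\Aut(C^0_M)\to\Aut(\Jac(C^0)_M)$ is \emph{injective} (with $w\mapsto -1$), so its kernel is trivial, not $\langle w\rangle$; the statement you actually need --- and use --- is that $\id$ and $w$ are the only automorphisms acting as a scalar on the Jacobian, equivalently that the composite map to $\Aut(\Jac(C^0)_M)/\{\pm1\}$ has kernel $\langle w\rangle$. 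With that correction the argument is complete.
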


\begin{proof}
We first suppose that $[L\colon K]=2$.
Observe that for both $C^0_2$ and $C^0_3$, if $\alpha\in \Aut(C^0_M)$ and  $\Tr \theta_{E^0,C^0}(\alpha)=-2$, then $\alpha=w$.
In view of the isomorphism in (\ref{equation: iso theta}), it thus suffices to prove that $\theta_{M,\sigma}(E^0,\Jac(C))(\omega)=-2$.
From the proof of Proposition \ref{proposition: degree}, we know that $\Jac(C)_K\sim E^2$, where $E$ is an elliptic curve defined over $K$ with CM by $M$.
Fix an isomorphism $\psi_1\colon E^0_L\rightarrow  E_L$.
Fix an isogeny $\psi_2\colon E_K\times E_K \rightarrow \Jac(C)_K$.
For $i=1,2$, let $\iota_i\colon E_K\rightarrow E_K\times E_K$ denote the natural injection to the $i$th factor. Then $\psi_2\circ\iota_1\circ\psi_1$ and $\psi_2\circ\iota_2\circ\psi_1$ constitute a basis of the $\Qbar[\Gal(L/M)]$-module $\Hom(E^0_L,\Jac(C)_L)\otimes_{M,\sigma}\Qbar$.
The claim follows from the fact that ${}^{\omega}\psi_1=-\psi_1$, ${}^{\omega}\psi_2=\psi_2$, and ${}^{\omega}\iota_i=\iota_i$.

Now suppose that $[L\colon K]=1$. Recall the monomorphism
$$\overline\lambda_\phi\colon\Gal(K/\Q)\hookrightarrow \bigslant{\Aut(C^0_M)}{\langle w\rangle}\rtimes \Gal(M/\Q)$$
of equation (\ref{remark: injection}).  The commutativity of the diagram
$$
\xymatrix{
&G_{C^0}\ar[rd] & \\
\Gal(L/\Q)\ar@{=}[r]\ar@{^{(}->}[ur]^{\lambda_\phi} & \Gal(K/\Q) \ar@{^{(}->}[r]^{\overline\lambda_\phi}& \bigslant{G_{C^0}}{\langle (w,\id)\rangle}\,.}
$$
implies that $(w,\id)$ does not lie in the image of $\lambda_\phi$.
\end{proof}

\begin{remark}
Let $H_0:=\lambda_\phi(\Gal(L/M))$.
 If $(\id,\tau)$ lies in the image of $\lambda_\phi$, then $\lambda_\phi (\Gal(L/\Q))= H_0\rtimes \langle (\id,\tau)\rangle$; indeed, $H_0$ is normal in $\lambda_\phi (\Gal(L/\Q))$, since its index is $2$, and  $H_0\cap \langle (\id,\tau)\rangle$ is trivial.
In this case, $H_0$ is stable under the action of $\Gal(M/\Q)$. However it is not true in general that $\Gal(L/\Q)\simeq H_0\rtimes \langle (\id,\tau)\rangle$ or that $H_0$ is stable under the action of $\Gal(M/\Q)$.
\end{remark}

\begin{proposition}\label{proposition: trthetaMEA}  For $\tau$ in $\Gal(L/\Q)$, let $s=s(\tau)$, $r=r(\tau)$, and $t=t(\tau)$ denote the orders of $\tau$, the projection of~$\tau$ on $\Gal(K/\Q)$, and the projection of~$\tau$ on $\Gal(M/\Q)$, respectively.  The following hold:
\begin{enumerate}[\rm (i)]
\item The triple $(s,r,t)$ is one of the $13$ triples listed in Table  \ref{table:trthetaMEA}.
\item If $\tau$ fixes $M$, then the triple $(s,r,1)$ determines, up to sign, the quantities
\begin{align}
a_1(\theta)(\tau)&=\Tr\theta_{M,\sigma}(E^0,\Jac(C))(\tau),\\
a_2(\theta)(\tau)&=\det\theta_{M,\sigma}(E^0,\Jac(C))(\tau),
\end{align}
as specified in Table \ref{table:trthetaMEA}.
\item For each triple $(s,r,t)$, let $F_{(s,r,t)}\colon [-2,2]\rightarrow [-4,4]\times [-2,6]$ be the map defined in Table \ref{table:trthetaMEA}.
For every prime $p>3$ unramified in $L$ of good reduction for both $\Jac(C)$ and $E^0$, there exists a unique triple $(s,r,t)$ such that
\begin{equation}\label{equation: functions}
F_{(s,r,t)}(a_1(E^0)(p))=(u\cdot a_1(\Jac(C))( p), a_2(\Jac(C))( p)),
\end{equation}
with $u=\pm 1$ $($in fact, $u=1$ for $(s,r,t)\not = (6,6,1)$ and $(8,4,1)$$)$. Moreover, the unique triple $(s,r,t)$ for which $(\ref{equation: functions})$ holds is $\bigl(f_L(p),f_K(p),f_M(p)\bigr)$, where $f_F(p)$ is the residue degree of $p$ in $F$.
\end{enumerate}

\begin{remark}
Observe that for a prime $p$ unramified in $L$ such that $f_M(p)=1$, we have
 $$a_2(\Jac(C))( p)=a_2(\theta)(\Frob_ p)\cdot a_1(E^0)(p)^2+|a_1(\theta)(\Frob_{p})|^2-2 a_2(\theta)(\Frob_ p)\,,$$
where $a_2(\theta)(\Frob_ p)=\pm 1$.
It follows that for any two twists $C$ and $C'$ of $C^0$ we have
$$\hat a_2(\Jac(C))( p) \equiv \pm\hat a_2(\Jac(C'))( p)\quad\pmod{ p}\,.$$
where $\hat a_i(A)(p)=p^{i/2}a_i(A)(p)$ is the integer that appears as the coefficient of $T^i$ in the (unnormalized) $L$-polynomial $L_p(A,T)$.
\end{remark}

\begin{table}
\begin{center}
\caption{The triples for $(s,r,t)$ associated to $\tau\in\Gal(L/\Q)$ (as defined in Proposition~\ref{proposition: trthetaMEA}), and corresponding values of $F_{(s,r,t)}(x)$, $a_1(\theta)(\tau)$, and $a_2(\theta)(\tau)$.}
\label{table:trthetaMEA}
\vspace{6pt}
\begin{tabular}{rrrr}
$(s,r,t)$ & $F_{(s,r,t)}(x)$ & $a_1(\theta)(\tau)$ & $a_2(\theta)(\tau)$ \\\hline\vspace{-10pt}
&&&\\
$(1,1,1)$ & $(2x,x^2+2)$                 & $2$                & $1$ \\
$(2,1,1)$ &$(-2x,x^2+2)$                & $-2$               & $1$  \\
$(2,2,1)$ &$(0,-x^2+2)$                 & $0$                & $-1$  \\
$(3,3,1)$ & $(-x,x^2-1)$                & $-1$               & $1$   \\
$(4,2,1)$ & $(0,x^2-2)$                 & $0$                & $1$   \\
$(6,3,1)$ &  $(x,x^2-1)$                 & $1$                & $1$ \\
$(6,6,1)$ & $(\sqrt{3(4-x^2)},-x^2+5)$                 & $\pm\sqrt{-3}$     & $-1$ \\
$(8,4,1)$ & $(\sqrt{2(4-x^2)},-x^2+4)$                 & $\pm \sqrt{-2}$    & $-1$  \\
$(2,2,2)$ & $(0,2)$ &  $-$ & $-$ \\
$(4,2,2)$ & $(0,-2)$&  $-$ & $-$ \\
$(6,6,2)$ & $(0,-1)$&  $-$ & $-$ \\
$(8,4,2)$ & $(0,0)$&  $-$ & $-$ \\
$(12,6,2)$& $(0,1)$&  $-$ & $-$ \\\hline
\end{tabular}
\end{center}
\end{table}
\end{proposition}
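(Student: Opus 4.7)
The plan is to establish parts (i), (ii), (iii) in sequence, using the machinery developed in Sections \ref{section: squares} and \ref{section: representations twists}. The key inputs are Lemma \ref{lemma: trans coeff} (which transfers Euler-factor computations from $E^0$ to $\Jac(C)$), the identification \eqref{equation: iso theta} of $\theta_{M,\sigma}(E^0,\Jac(C))$ with the pullback by $\lambda_\phi$ of the representation $\theta_{E^0,C^0}$ of $\Aut(C^0_M)$, and Propositions \ref{proposition: degree} and \ref{proposition: hyperprop} describing the cover $L/K$.

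For (i), I would first observe that $t\in\{1,2\}$ since $[M:\Q]=2$. For $\tau\in\Gal(L/M)$ (the case $t=1$), Proposition \ref{proposition:trtheta} restricts $r\in\{1,2,3,4,6\}$, and Proposition \ref{proposition: degree} forces $s\in\{r,2r\}$. In the subcase $s=2r$, Proposition \ref{proposition: hyperprop} identifies the nontrivial element of $\Gal(L/K)$ with $\lambda_\phi^{-1}(w,\id)$, so $\lambda_\phi(\tau)^r=(w,\id)$. Thus $(4,4,1)$ and $(12,6,1)$ would require $\alpha\in\Aut(C^0_M)$ of order $4$ (resp.\ $12$) with $\alpha^4=\id$ and nontrivial image of order $4$ (resp.\ $\alpha^{12}=\id$, $\alpha^6=w$, image of order $6$). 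A direct check inside $\symtilde 4$ and $2\dih 6$, whose double-cover structure forces elements projecting to order $4$ or $6$ in the quotient by $\langle w\rangle$ to actually have order $8$ or $12$, rules out these two triples. For $t=2$, $\tau^2\in\Gal(L/M)$ has one of the admissible triples already determined, and an analogous inspection of element orders in the full group $G_{C^0}=\Aut(C^0_M)\rtimes\Gal(M/\Q)$ yields exactly the five remaining triples listed.

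For (ii), by \eqref{equation: iso theta} and Lemma \ref{lemma: twisting rep}, $\Tr\theta_{M,\sigma}(E^0,\Jac(C))(\tau)$ equals $\chi\bigl(\lambda_\phi(\tau)\bigr)$ for a character $\chi\in\{\chi_4,\chi_5\}$ (for $C^0_2$) or $\chi\in\{\chi_8,\chi_9\}$ (for $C^0_3$). The two possible characters are Galois-conjugate over $M/\Q$, so their values differ by complex conjugation, accounting for the $\pm$ ambiguity precisely when the character value is purely imaginary. Reading the character tables on the conjugacy class determined by $(s,r)$ produces the claimed values of $a_1(\theta)(\tau)$, and the same reasoning applied to the one-dimensional (hence $\Q$-rational, sign-free) character $\det\theta_{E^0,C^0}$ gives $a_2(\theta)(\tau)\in\{\pm 1\}$ depending on $(s,r)$.

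For (iii), the identification of the correct triple with $(f_L(p),f_K(p),f_M(p))$ is immediate from the definition of residue degree. When $t=1$, substitution of the values from (ii) into Lemma \ref{lemma: trans coeff} yields each formula $F_{(s,r,1)}(x)$ by a direct calculation; the $\pm$ in $F_{(6,6,1)}$ and $F_{(8,4,1)}$ reflects the combined ambiguity coming from the choice of embedding $\sigma$ versus $\overline\sigma$ and from the sign of $u_2=\operatorname{Im}a_1(\theta)(\Frob_p)$ in Lemma \ref{lemma: trans coeff}. When $t=2$, $E^0$ is supersingular at $p$, so $a_1(E^0)(p)=0$ and hence $a_1(\Jac(C))(p)=0$; the value of $a_2(\Jac(C))(p)$ is then extracted from the enumeration of possible polynomials $\overline L_p(\Jac(C),T)$ given the order $s$ of $\Frob_p$, as carried out at the end of the proof of Corollary \ref{corollary: a2 moments}, yielding the values $2,-2,-1,0,1$ for $s=2,4,6,8,12$ respectively. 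The main obstacle is the exclusion step in part (i): ruling out $(4,4,1)$ and $(12,6,1)$ is a finite but genuinely group-specific check, relying on the nontriviality of the double covers $\symtilde 4\twoheadrightarrow \sym 4$ and $2\dih 6\twoheadrightarrow \dih 6$ at precisely the relevant conjugacy classes. Once (i) and (ii) are in hand, (iii) reduces to routine substitution.
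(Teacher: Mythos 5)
Your treatment of part (ii) and of the $t=1$ portion of part (i) matches the paper's: both reduce to reading the character tables through the isomorphism \eqref{equation: iso theta} and Lemma \ref{lemma: twisting rep}, and to a finite check of which pairs (order, order mod $w$) occur in $\Aut(C^0_M)$. (One small correction there: your stated reason for excluding $(12,6,1)$ --- that the double cover forces elements projecting to order $6$ to have order $12$ --- is false for $2\dih 6$, where the classes $6a,6b$ have order $6$ and project to order $6$; that is precisely why $(6,6,1)$ \emph{does} occur. The correct reason is simply that neither automorphism group contains an element of order $12$.)

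The genuine gaps are in the $t=2$ case of (i) and in the uniqueness claim of (iii). For $t=2$, your first argument --- that $\tau^2\in\Gal(L/M)$ must carry an admissible triple --- cannot exclude $(4,4,2)$: if $\tau$ had that triple, $\tau^2$ would have the perfectly admissible triple $(2,2,1)$. Your fallback, ``an analogous inspection of element orders in $G_{C^0}$,'' is asserted but not carried out, and it is exactly here that the paper does something different: after the group theory and $[L\colon K]\le 2$ leave the five listed triples \emph{plus} $(4,4,2)$, the paper kills $(4,4,2)$ arithmetically via Proposition \ref{proposition:trthetaQ} --- for $s=4$ one has $\overline L_p(\Jac(C),T)=(1-T^2)^2$ at the relevant supersingular primes, whose roots have only $\pm 1$ as quotients, forcing $\theta_M(\Jac(C))(\tau)$ to have order $2$ and hence $r=2$. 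You need either this argument or an explicit verification (not just an appeal) that $G_{C^0_2}=\langle 96,193\rangle$ contains no element of order $4$ outside $\Aut(C^0_M)\times\{1\}$ whose image modulo $(w,1)$ has order $4$. Separately, in (iii) you prove existence of a triple satisfying \eqref{equation: functions} but not uniqueness: several of the maps $F_{(s,r,t)}$ have overlapping ranges (e.g.\ $F_{(3,3,1)}$ and $F_{(6,3,1)}$ agree up to the sign of the first coordinate), so one must check, as the paper does, that the graphs of the $13$ functions meet only in finitely many points, none of which is attainable by $\bigl(a_1(E^0)(p),a_1(\Jac(C))(p),a_2(\Jac(C))(p)\bigr)$ for $p>3$. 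Without that check the ``unique triple'' statement --- which is what makes the provisional computation of $T(C)$ in \S\ref{section: provisional T(C)} work --- is unproven.
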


\begin{proof} For assertion (i) assume first that $t=1$.
Observe that $s$ is the order of $\lambda_\phi(\tau)$ in $\Aut(C^0_M)$, and $r$ is the order of the projection of $\lambda_\phi(\tau)$ in $\bigslant{\Aut(C^0_M)}{\langle w\rangle}$.
Let $c$ denote the conjugacy class of $\lambda_\phi(\tau)$ in $\Aut(C^0)$.
One finds that the pairs $(r,s)$ are determined by the conjugacy class of $\tau$ as follows:
\begin{center}
\begin{tabular}{ll}
if $C^0=C^0_3$ &
$\left\{\begin{array}{lllllllll}
c\colon & 1a & 2a & 2b,2c & 3a & 4a & 6a, 6b & 6c\\
(r,s)\colon & (1,1) & (2,1)  & (2,2) & (3,3) & (4,2) & (6,6) & (6,3)
\end{array}\right.$
\\[8pt]
if $C^0=C^0_2$
&
$\left\{\begin{array}{lllllllll}
c\colon & 1a & 2a & 2b & 3a & 4a & 6a & 8a\\
(r,s)\colon & (1,1) & (2,1)  & (2,2) & (3,3) & (4,2) & (6,3) & (8,4)
\end{array}\right.$
\end{tabular}
\end{center}
(see Tables \ref{table: charactertable3} and \ref{table: charactertable2} for the names of the conjugacy classes).
Assertion (ii) now follows immediately by applying the isomorphism in (\ref{equation: iso theta}) and Lemma \ref{lemma: twisting rep}.
If $t=2$, then $r$ must be $2$, $4$, or $6$, and the fact that $[L\colon K]\leq 2$ limits $(s,r,t)$ to either one of the last $5$ triples in Table~\ref{table:trthetaMEA}, or $(4,4,2)$.
The latter possibility is ruled out by Proposition \ref{proposition:trthetaQ}: if $s=4$, then for every prime $p$ for which $\Frob_p$ lies in the same conjugacy class of $\tau$ in $\Gal(L/\Q)$, we have $\overline L_p(\Jac(C),T)=(1-T^2)^2$, and the only quotients of roots of this polynomial are $1$ and $-1$.
This implies that $\theta_M(\Jac(C))(\tau)$ has order 2, and since $\theta_M(\Jac(C))$ is faithful we must have $r=r(\tau)=2$, not 4.

For $t=1$, the existence statement in (iii) follows from combining Lemma \ref{lemma: trans coeff} with statement (ii), and
for $t=2$, it follows from the proof of Corollary~\ref{corollary: a2 moments}.
The uniqueness of the map $F_{(s,r,t)}$ satisfying (\ref{equation: functions}) at a prime $p > 3$ may be verified by noting that the graphs of the 13 functions $F_{(s,r,t)}$ intersect in only finitely many points in $\R^3$, none of which corresponds to a possible value of $(a_1(E^0)(p),a_1(\Jac(C))(p),a_2(\Jac(C))(p))$ for any prime $p > 3$.
Finally, we note that if $\tau=\Frob_p$ then $(s(\tau),r(\tau),t(\tau))=(f_L(p),f_K(p),f_M(p))$.
\end{proof}


We now give two examples of abelian varieties $A$ such that $A_\Qbar \sim (E^0_\Qbar)^2$ for which the conclusions of Propositions \ref{proposition: degree} and \ref{proposition: trthetaMEA} do not hold because $A$ is not $\Q$-isogenous to the Jacobian of a twist of $C^0$.
In the two examples below we use the elliptic curve
$$\tilde{E}^0_3\colon y^2=x^3+2$$
defined over $\Q$, which is a twist of $E^0_3$.

\begin{example}
Let $A=E^0_3\times \tilde{E}^0_3$.  Then $K=L=\Q(\sqrt[6]{2},\zeta_3)$.
If $\tau\in\Gal(L/M)$ and $s(\tau)=3$, then $a_1(\theta)(\tau)=1+\zeta_3$ or $1+\overline\zeta_3$ and $a_2(\theta)(\tau)=\zeta_3$ or $\overline\zeta_3$, which do not lie in $\Q$.
Thus by Proposition \ref{proposition: trthetaMEA}, $A$ is not $\Q$-isogenous to the Jacobian of any $\Q$-twist  of $C^0_3$.
Moreover, for $p=7$, one can compute
that $\hat a_2(A)(7)=30$, while $\hat a_2((E^0_3)^2)(7)=\hat a_2(\Jac(C^0_3))(7)=18$. Thus
$$\hat a_2(A))( 7) \not\equiv \pm\hat a_2(\Jac(C^0))( 7)\quad\pmod{ 7}\,.$$
\end{example}

\begin{example}\label{example: counterexamples}
Let $A=(\tilde{E}^0_3)^2$, then $L=\Q(\sqrt[6]2,\zeta_3)$ and $K=\Q(\zeta_3)$.
Then $[L\colon K]=6,$, and, by Proposition \ref{proposition: degree}, $A$ is not $\Q$-isogenous the Jacobian of any $\Q$-twist  of $C^0_3$.
In the context of the proof of Proposition \ref{proposition: degree}, $L_0=\Q(\sqrt2,\zeta_3)$ and $\Res^\Q_{L_0}\theta(E^0,A)\simeq \chi ^{2\oplus}$, rather than $\Res^\Q_{L_0}\theta(E^0,A)\simeq \chi \oplus\overline\chi$, which avoids the contradiction used in the proof.
Moreover, for $A$ we may have $s(\tau)=3$ and $r(\tau)=1$, which gives a pair $(s,r)$ that can not occur for the Jacobian of any $\Q$-twist of $C^0_3$, by part $(ii)$ of Proposition \ref{proposition: trthetaMEA}.
\end{example}

\subsection{The triples  \texorpdfstring{$T(C)$}{}}\label{section: triples of Galois groups}

In this section we will determine the possible values of the triple $T(C)$, which denotes the isomorphism class $[\Gal(L/\Q), \Gal(K/\Q), \Gal(L/M)]$.
To specify triples explicitly, we use identifiers from the Small Groups Library \cite{SGL} found in computer algebra systems such as GAP \cite{GAP4} and Magma \cite{Magma}.
These identifiers consist of a pair of positive integers $\langle n, m\rangle$, where $n$ is the order of the group and~$m$ distinguishes the group from other groups of order $n$ but otherwise has no meaning.  We also recall from $\S$\ref{section: representations twists} the embeddings
\begin{center}
\begin{tabular}{lll}
$\lambda_\phi\colon \Gal(L/\Q)\hookrightarrow G_{C^0}$, & & $\Res\lambda_\phi\colon\Gal(L/M)\hookrightarrow \Aut(C^0_M)$,\\
$\overline\lambda_\phi\colon \Gal(K/\Q)\hookrightarrow \bigslant{G_{C^0}}{\langle (w,1)\rangle}$, &  & $\Res\overline \lambda_\phi\colon\Gal(K/M)\hookrightarrow \bigslant{\Aut(C^0_M)}{\langle w\rangle}$,
\end{tabular}
\end{center}
where $w$ denotes the hyperelliptic involution of $C^0$.

\begin{lemma}\label{lemma: specify groups}
The groups $G_{C^0}$, $G_{C^0}/\langle (w,1) \rangle$, $\Aut(C^0_{M})$,  and $\Aut(C^0_{M})/ \langle w \rangle$  are as follows:
\begin{center}
\begin{tabular}{lllll}
$C^0$ &  $G_{C^0}$ & $G_{C^0}/\langle (w,1) \rangle$  & $\Aut(C^0_{M})$ & $\Aut(C^0_{M})/ \langle w \rangle$\vspace{2pt}\\\hline\vspace{-10pt}
&&&&\\
$C^0_3$ &  $\langle 48, 38\rangle$ & $\langle 24, 14\rangle$  & $\langle 24, 8\rangle$ & $\langle 12, 4\rangle$ \vspace{2pt}\\
$C^0_2$ & $\langle 96,193\rangle$ & $\langle 48,48\rangle$   & $\langle 48,29\rangle$ & $\langle 24,12\rangle$
\end{tabular}
\end{center}
\end{lemma}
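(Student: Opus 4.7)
The plan is to treat the two rows ($C^0=C^0_3$ and $C^0=C^0_2$) in parallel and proceed in four steps. First, I would identify $\Aut(C^0_M)$ explicitly using the models given at the start of the section. For $C^0_3\colon y^2=x^6+1$, the group is generated by $\rho\colon(x,y)\mapsto(\zeta_6 x,-y)$ of order $12$ and $\iota\colon(x,y)\mapsto(1/x,\,y/x^3)$; both maps have coefficients in $M=\Q(\sqrt{-3})=\Q(\zeta_6)$, so $\Aut(C^0_M)$ coincides with $\Aut(C^0_\Qbar)$, of order $24$ and isomorphic to the double cover $2D_6$ of $D_6$ mentioned in \S\ref{section: 2 moduli points}. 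For $C^0_2$, Lemma~\ref{lemma: automorphisms vs endomorphisms} (via \cite[Prop.~8]{Car06}) already tells us that all $48$ automorphisms of $\symtilde 4$ are defined over $M=\Q(\sqrt{-2})$, so again $\Aut(C^0_M)=\Aut(C^0_\Qbar)$ and is of the stated isomorphism type.

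Second, I would describe the semidirect action of $\Gal(M/\Q)$ to identify $G_{C^0}$. The nontrivial element of $\Gal(M/\Q)$ acts on $\Aut(C^0_M)$ by conjugating the defining coefficients; for $C^0_3$ this sends $\zeta_6\mapsto\zeta_6^{-1}$, which inverts $\rho$ and fixes $\iota$. An analogous computation from the explicit $\symtilde 4$-action on $C^0_2$ pins down the action in the second case. Together with the identification of $\Aut(C^0_M)$, this specifies $G_{C^0}=\Aut(C^0_M)\rtimes\Gal(M/\Q)$ up to isomorphism, of orders $48$ and $96$ respectively, matching the orders in the claimed Small Groups Library identifiers.

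Third, I would observe that the hyperelliptic involution $w$ (equal to $\rho^6$ in the $d=3$ model, and the corresponding central element of $\symtilde 4$ in the $d=2$ model) is central in $\Aut(C^0_M)$ and is fixed by $\Gal(M/\Q)$, so $\langle w\rangle\trianglelefteq \Aut(C^0_M)$ and $\langle (w,1)\rangle\trianglelefteq G_{C^0}$ are central subgroups of order $2$. The quotients $\Aut(C^0_M)/\langle w\rangle$ and $G_{C^0}/\langle (w,1)\rangle$ are therefore well defined of orders $12$, $24$, $24$, and $48$ in the four cases, again matching the first coordinates of the claimed identifiers. One then finishes the identification by computing a short list of invariants (order of the center, derived series, abelianization, exponent, Sylow structure) from the explicit presentations and comparing with the GAP/Magma database entries $\langle 48,38\rangle$, $\langle 24,14\rangle$, $\langle 24,8\rangle$, $\langle 12,4\rangle$ and $\langle 96,193\rangle$, $\langle 48,48\rangle$, $\langle 48,29\rangle$, $\langle 24,12\rangle$.

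The main obstacle is the last bookkeeping step, particularly for the $d=2$ row: there are $231$ isomorphism classes of groups of order $96$ and $52$ of order $48$, so a careless invariant-count could fail to distinguish $\langle 96,193\rangle$ from a close neighbor. The efficient route is to observe that $G_{C^0_2}$ fits in the central extension $1\to\langle w\rangle\to G_{C^0_2}\to \langle 48,48\rangle\to 1$ with a known $\Gal(M/\Q)$-action on $\symtilde 4$, and to verify this matches the small group presentation directly in a computer algebra system; once the identifiers are pinned down for one row, the three dependent groups in that row are determined by the same calculation.
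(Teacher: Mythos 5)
Your overall strategy is the same as the paper's: write down explicit generators of $\Aut(C^0_M)$, observe that the whole automorphism group is already defined over $M$, record the $\Gal(M/\Q)$-action to form the semidirect product, and then identify the four groups against the Small Groups Library with a computer algebra system (the paper does this via the $\GL_2$-embedding $\iota$, giving explicit matrices $U_d,V_d$ and letting Magma do the matching). So the route is not different; but one of your concrete claims is false. The map $\rho\colon(x,y)\mapsto(\zeta_6x,-y)$ has order $6$, not $12$: $\rho^n=(\zeta_6^nx,(-1)^ny)$, so $\rho^6=\id$, and in particular $w\neq\rho^6$. Indeed $\Aut((C^0_3)_M)\simeq\langle 24,8\rangle$ has \emph{no} elements of order $12$ at all, as the paper's own character table (Table \ref{table: charactertable3}) shows — the element orders are $1,2,3,4,6$. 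The group is nevertheless generated by $\rho$ and $\iota\colon(x,y)\mapsto(1/x,y/x^3)$, with $w=(x,y)\mapsto(x,-y)$ recovered as $(\rho\iota)^2$ rather than as a power of $\rho$, so the method survives once this is corrected; but as written the order count $|\langle\rho\rangle\cdot\langle\iota\rangle|$ and the identification of $\langle w\rangle$ inside your presentation are wrong, which is exactly the kind of slip that would derail the final isomorphism test against $\langle 24,8\rangle$.

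A secondary weakness: for $C^0_2$ you defer entirely to ``an analogous computation from the explicit $\symtilde 4$-action,'' but no such explicit action is exhibited, and $C^0_2\colon y^2=x^6-5x^4-5x^2+1$ is not the standard model $y^2=x^5-x$, so its automorphisms are genuinely nonobvious (the paper must supply the matrices $U_2,V_2$ with entries in $\Q(\sqrt{-2})$ precisely for this reason). Without some such explicit description, the $d=2$ row of the table is asserted rather than proved.
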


\begin{proof} We show how to compute $G_{C^0}$  and $\Aut(C^0_{M})$; the respective quotients are then easily obtained. Recall that if $C/\Q$ is a genus 2 curve, given by a hyperelliptic equation  $y^2=f(x)$, where $f(x)\in \Q[x]$, then for any $\alpha\in\Aut(C_\Qbar)$ there exist $m,\,n\,,p\,,q\in \Qbar$ such that
\begin{equation}\label{equation: injection}
\alpha(x,y)=\left(\frac{mx+n}{px+q},\frac{mq-np}{(px+q)^3}y\right);
\end{equation}
see, for example, \cite{Car06}.  Let
$$
\iota(\alpha):=\begin{pmatrix}
  m & n \\
  p & q
\end{pmatrix}.$$
The map
$\iota\colon \Aut(C_{\Qbar})\rightarrow \GL_2(\Qbar)$ that sends $\alpha$ to $\iota(\alpha)$
is a $G_\Q$-equivariant monomorphism. For $d=2,3$, we have $\Aut((C^0_d)_M)=\langle U_d,\, V_d\rangle$, where
\begin{center}
\begin{tabular}{lll}
$
U_3=\begin{pmatrix}
0 & 1\\
1 & 0
\end{pmatrix}$, &  &
$V_3=\frac{1}{2}
\begin{pmatrix}
0 & -1+\sqrt{-3} \\
1+\sqrt{-3} & 0\\
\end{pmatrix},$\\[10pt]
$U_2=\frac{1}{2}
\begin{pmatrix}
\sqrt{-2}-1 & 1 \\
 1 & 1+\sqrt{-2}
\end{pmatrix},$ &  &
$V_2=\frac{1}{2}
\begin{pmatrix}
1 & -\sqrt{-2}+1\\
 -1-\sqrt{-2} & 1\\
\end{pmatrix}.$\\
\end{tabular}
\end{center}
One can readily check that $U_d$ and $V_d$ represent automorphisms of $(C^0_d)_M$, and that they generate a group of order 48 if $d=2$ and of order 24 if $d=3$,
which are known to be the orders of $\Aut((C_d^0)_M)$ .
With this explicit representation, the isomorphism type of $\langle U_d, V_d\rangle$ is then easily determined by a computer algebra system.
The group $G_{C_d^0}$ is then determined by explicitly computing the semidirect product $\langle U_d,\, V_d\rangle\rtimes \Gal(\Q(\sqrt{-d})/\Q)$.
\end{proof}

Let $\tilde T(C)$ denote the triple
$$(\lambda_\phi(\Gal(L/\Q)), \lambda_\phi(\Gal(L/M)),\lambda_\phi(\Gal(L/M)))$$
in $G_{C^0}\times G_{C^0}\times G_{C^0}$. Since $\lambda_\phi$ is injective, the conjugacy class of $\tilde T(C)$ determines $T(C)$ and $z(C)$, where $z(C)$ is the vector in Definition~\ref{definition: zvector}.
In order to bound the number of possibilities for $T(C)$ and $z(C)$, we first bound the number of possible triples $\tilde T(C)$, up to conjugation.

\begin{lemma}\label{lemma: primera}
Let $H$, $N$ and $H_0$ be subgroups of $G_{C^0}$. If $\tilde T(C)=(H,N,H_0),$ then the following conditions must be satisfied:

\begin{enumerate}[\rm (i)]
\item $H_0$ and $H\cap\Aut(C^0_M)\times \langle 1\rangle$ coincide and have order $|H|/2$.
\item $N$ and $\langle (w,1)\rangle\cap H_0$ coincide.
\end{enumerate}
\end{lemma}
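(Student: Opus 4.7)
The plan is to verify both conditions by unwinding the definition of $\lambda_\phi$ from \S\ref{section: representations twists} and invoking Proposition~\ref{proposition: hyperprop}. Since we are asked only to produce the necessary conditions (not sufficient), this is essentially a bookkeeping lemma.

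For (i), recall that $\lambda_\phi(\sigma)=(\phi({}^\sigma\phi)^{-1},\pi_{L/M}(\sigma))$, so composing $\lambda_\phi$ with the projection $G_{C^0}\to\Gal(M/\Q)$ onto the second factor recovers the natural restriction $\pi_{L/M}\colon\Gal(L/\Q)\to\Gal(M/\Q)$. The kernel of this projection on $G_{C^0}$ is precisely $\Aut(C^0_M)\times\langle 1\rangle$, and the kernel of the restriction $\pi_{L/M}$ is $\Gal(L/M)$. Therefore
$$H_0=\lambda_\phi(\Gal(L/M))=\lambda_\phi(\Gal(L/\Q))\cap(\Aut(C^0_M)\times\langle 1\rangle)=H\cap(\Aut(C^0_M)\times\langle 1\rangle).$$
The order relation $|H_0|=|H|/2$ then follows from $[L:M]=[L:\Q]/2$, using that $\lambda_\phi$ is injective.

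For (ii), the cyclic subgroup $\langle (w,1)\rangle$ of $G_{C^0}$ has order $2$, so $\langle (w,1)\rangle\cap H_0$ is trivial or equal to $\langle (w,1)\rangle$ depending on whether $(w,1)\in H_0$. Proposition~\ref{proposition: hyperprop} tells us that $(w,1)$ lies in the image of $\lambda_\phi$ if and only if $[L:K]=2$, and in that case its preimage is the nontrivial element $\omega$ of $\Gal(L/K)$, which is automatically in $\Gal(L/M)$ since $M\subseteq K$. Hence $(w,1)\in H_0$ if and only if $[L:K]=2$. On the other hand, $N=\lambda_\phi(\Gal(L/K))$ equals $\{(1,1)\}$ when $L=K$ and $\{(1,1),(w,1)\}=\langle (w,1)\rangle$ when $[L:K]=2$ (again by Proposition~\ref{proposition: hyperprop}). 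Comparing these two cases directly gives $N=\langle (w,1)\rangle\cap H_0$.

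There is no substantive obstacle here; the only subtlety is in keeping track of which element of $G_{C^0}$ is hit by which Galois element, which is already settled by Proposition~\ref{proposition: hyperprop}. The lemma is intended as a preparatory step for the subsequent enumeration of possible triples $\tilde T(C)$ up to conjugation, so the goal is simply to extract the cleanest necessary conditions that $(H,N,H_0)$ must satisfy.
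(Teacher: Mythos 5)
Your proof is correct and follows essentially the same route as the paper's: part (i) by unwinding the definition of $\lambda_\phi$ (the paper phrases it as a counting argument on the disjoint union $H=H_0\sqcup H_1$ rather than via kernels, but the content is identical), and part (ii) by reducing to Proposition~\ref{proposition: hyperprop}, which is exactly the equivalence the paper invokes.
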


\begin{proof} Let $\Gal(M/\Q)=\{1,\tau\}$. Then
\begin{center}
\begin{tabular}{l}
$H_0=\lambda_\phi(\Gal(L/M))\subseteq (\Aut(C^0_M)\times\{ 1\}) \cap H,$\\[4pt]
$H_1:=\lambda_\phi(\Gal(L/\Q)\setminus\Gal(L/M))\subseteq (\Aut(C^0_M)\times\{ \tau\}) \cap H.$
\end{tabular}
\end{center}
The injectivity of $\lambda_\phi$ implies that $|H_0|=|H_1|=|H|/2$, and (i) follows from the fact that $H=H_0\sqcup H_1$.

Proving (ii) is equivalent to showing that $(w,1)$ lies in the image of $\lambda_\phi$ if and only if $[L\colon K]=2$.  But this has already been proved, see Proposition \ref{proposition: hyperprop}.
\end{proof}

\begin{proposition}\label{proposition: candidates} Let $H$, $N$, and $H_0$ be subgroups of $G_{C^0}$, that satisfy conditions $(i)$ and $(ii)$ of Lemma \ref{lemma: primera}. Then the following hold:
\begin{enumerate}[\rm (i)]
\item For $C^0=C^0_2$ (resp.\ $C^0=C^0_3$), there are exactly $27$ (resp.\ $38$) possibilities for the conjugacy class of $(H,N,H_0)$ in $G_{C^0}\times G_{C^0}\times G_{C^0}$.

\item For $C^0=C^0_2$ (resp.\ $C^0=C^0_3$), the $27$ (resp.\ $38$) possibilities for the conjugacy class of $(H,N,H_0)$
give rise to exactly the $23$ (resp.\ $23$)  isomorphism classes $[H,H/N,H_0]$ and vectors $z(H,N,H_0)$ listed in Table~\ref{Table: candidates2} (resp.\ Table~\ref{Table: candidates3}).
Moreover, $[H,H/N,H_0]$ and $z(H,N,H_0)$ determine each other uniquely.

\item  For $C^0=C^0_2$ (resp.\ $C^0=C^0_3$) the triple $T(C)$ and the vector $z(C)$ must be among those listed in Table~\ref{Table: candidates2} (resp.\ Table~\ref{Table: candidates3}),
and $T(C)$ and $z(C)$ determine each other uniquely
\end{enumerate}
\end{proposition}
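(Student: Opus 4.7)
The plan is to reduce the enumeration of triples $(H, N, H_0)$ to an enumeration of individual subgroups $H \leq G_{C^0}$, and then carry out that enumeration in a computer algebra system using the explicit presentations from Lemma \ref{lemma: specify groups}. First I would observe that the hyperelliptic involution $w$ is central in $\Aut(C^0_M)$ and is defined over $\Q$, so $(w,1)$ is central in $G_{C^0}$, and consequently both $\Aut(C^0_M)\times\{1\}$ and $\langle(w,1)\rangle$ are normal subgroups. Given conditions (i) and (ii) of Lemma \ref{lemma: primera}, the subgroups $H_0 = H \cap (\Aut(C^0_M)\times\{1\})$ and $N = \langle(w,1)\rangle \cap H$ are determined by $H$ alone, and the condition $[H : H_0] = 2$ is equivalent to requiring $H \not\subseteq \Aut(C^0_M)\times\{1\}$. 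Since the two distinguished subgroups above are invariant under inner automorphisms of $G_{C^0}$, conjugation acts compatibly on the triple; hence $H \mapsto (H, N, H_0)$ induces a bijection between $G_{C^0}$-conjugacy classes of subgroups $H \not\subseteq \Aut(C^0_M)\times\{1\}$ and $G_{C^0}$-conjugacy classes of triples satisfying Lemma \ref{lemma: primera}.

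Next I would feed the explicit matrix generators $U_d, V_d$ of $\Aut(C^0_M)$ together with the $\Gal(M/\Q)$-action into GAP or Magma, and enumerate conjugacy classes of subgroups of $G_{C^0}$ that are not contained in $\Aut(C^0_M) \times \{1\}$. For $C^0 = C^0_2$, where $|G_{C^0}| = 96$, this should yield $27$ classes; for $C^0 = C^0_3$, where $|G_{C^0}| = 48$, it should yield $38$ classes, which proves (i).

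For part (ii), for each representative $H$ I would compute the abstract isomorphism-class triple $[H, H/N, H_0]$ (identifying each constituent via the Small Groups Library) and also the vector $z(H, N, H_0)$, which is read off directly from the orders of elements in $H_0$ and $H\setminus H_0$ and the orders of their images in $H/N$. Two facts then need to be verified by inspection of the resulting tabulation: distinct conjugacy classes may collapse to the same abstract isomorphism triple, and after this coarsening exactly $23$ triples remain in each case; and the map $[H, H/N, H_0] \mapsto z(H, N, H_0)$ on these $23$ triples is both well-defined and injective. This content is then precisely what is recorded in Tables \ref{Table: candidates2} and \ref{Table: candidates3}. Part (iii) is immediate from (i) and (ii), since $T(C) = [H, H/N, H_0]$ and $z(C) = z(H, N, H_0)$ for the triple $\tilde T(C)$ associated to any $\Q$-twist $C$.

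The main obstacle will be the bookkeeping in part (ii). The abstract triple $[H, H/N, H_0]$ remembers only the three isomorphism types separately and forgets how $N$ and $H_0$ are embedded in $H$; a priori, two non-conjugate realizations of the same abstract triple could produce different vectors $z$, and two inequivalent abstract triples could produce the same vector $z$. Ruling out both failures requires working through all $27$ and $38$ conjugacy classes, but is purely mechanical once the enumeration and the computation of the vectors $z$ have been carried out.
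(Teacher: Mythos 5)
Your proposal is correct and follows essentially the same route as the paper: both reduce the count to a computer enumeration of conjugacy classes of subgroups $H$ of $G_{C^0}$ meeting $\Aut(C^0_M)\times\{1\}$ in index $2$ (since $N$ and $H_0$ are then determined by $H$, and $(w,1)$ is central), and both then tabulate $[H,H/N,H_0]$ and $z(H,N,H_0)$ and verify by inspection that these determine each other. The only difference is that you work with the concrete semidirect product built from the matrix generators of Lemma~\ref{lemma: specify groups}, whereas the paper works with the abstract groups $\langle 96,193\rangle$ and $\langle 48,38\rangle$ and must therefore first check that the two subgroups abstractly isomorphic to $\Aut(C^0_M)$ yield identical lists of triples --- a verification your concrete setup renders unnecessary.
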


\begin{proof} For (i), recall that $G_{C^0_3}\simeq \langle 48, 38\rangle$ and $\Aut((C^0_3)_M)\simeq \langle 24,8\rangle$. The following three facts permit us to work with $G_{C^0_3}$ and $\Aut((C^0_3)_M)$ as abstract groups. First, there are exactly two subgroups $A_1$ and $A_2$ of $\langle 48, 38\rangle$ isomorphic to $\langle 24,8\rangle$. Second, there is a unique nontrivial central involution $\hat{w}$ in $\langle 48, 38\rangle$, and it lies in both $A_1$ and $A_2$. Third, consider the two lists of triples of groups, up to conjugation,
$$\mathcal L_i=\bigslant{\{(H,\langle \hat{w}\rangle\cap H,H\cap A_i) \,|\,H\subseteq\langle 48, 38\rangle,\,|H\cap A_i|=|H|/2\}}{\sim}\,,\quad i=1,\,2,\,$$
where $(H,\langle \hat{w}\rangle\cap H,H\cap A_i)\sim (H',\langle \hat{w}\rangle\cap H',H'\cap A_i)$ if $H$ and $H'$ are conjugated in $\langle 48, 38\rangle$. Then the lists $\mathcal L_1$ and $\mathcal L_2$ coincide; write $\mathcal L$ for this list. For $C^0=C^0_2$, the three previous facts can be checked to hold \emph{verbatim} when replacing $\langle 48,38 \rangle$ and $\langle 24,8\rangle$ by $\langle 96, 193\rangle$ and $\langle 48, 29\rangle$, respectively. For $C^0=C^0_3$, $\mathcal L$ has 38 elements and, for $C^0=C^0_2$, it has $27$ elements.

For (ii), for each of $C^0_2$ and $C^0_3$, we enumerate the triples $(H,N,H_0)$ in $\mathcal L$ and explicitly compute $[H,H/N,H_0]$ and $z(H,N,H_0)$ in each case  using a computer algebra system \cite{Magma}, obtaining the values listed in Tables~\ref{Table: candidates2} and \ref{Table: candidates3}.
 One then checks that $[H,H/N,H_0]=[H',H'/N',H_0']$ if and only if $z(H,H/N,H_0)=z(H',H'/N',H_0')$.

Statement (iii) follows immediately from (ii) and Lemma~\ref{lemma: primera}.
\end{proof}


\begin{proposition}\label{proposition: det ST}
The vector $z(C)$ and the triple $T(C)$ both uniquely determine the Sato-Tate group $\ST(\Jac(C))$.
\end{proposition}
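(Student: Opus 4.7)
The plan is to identify $\ST(\Jac(C))$ as a subgroup of $\USp(4)$ using the representation-theoretic data encoded in $T(C)$, and then to match this against the explicit classification of Sato-Tate groups in \cite{FKRS}. By Proposition \ref{proposition: candidates}(ii), the triple $T(C)$ and the vector $z(C)$ mutually determine each other, so it suffices to argue for $z(C)$.

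Since $\Jac(C)_\Qbar\sim E^2_\Qbar$ for a CM elliptic curve $E$, the identity component of $\ST(\Jac(C))$ is $\Unitary(1)$, and by \cite[Rem.~6.4, Thm.~6.10]{BK} the component group is isomorphic to $\Gal(K/\Q)$. Thus $\ST(\Jac(C))$ must lie among the $32$ groups classified in \cite{FKRS} whose identity component is $\Unitary(1)$. Each of these $32$ groups is distinguished, up to conjugacy in $\USp(4)$, by its abstract structure together with the character of its coset representatives acting on the standard $4$-dimensional representation of $\USp(4)$.

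Next, I would read off this character in terms of $z(C)$. For $\tau\in\Gal(L/\Q)$ lying in $\Gal(L/M)$, combining Propositions \ref{proposition:norm} and \ref{proposition:trtheta} shows that the trace of the representation on the corresponding element of $\USp(4)$ equals $2(2+\zeta_r+\overline\zeta_r)$, depending only on $r=r(\tau)$; for $\tau$ not in $\Gal(L/M)$, Proposition \ref{proposition:trthetaQ} lists the four eigenvalues and hence the trace purely in terms of $s=s(\tau)$. The vector $z(C)$ records exactly the multiplicities $o(r)$ (resp.\ $\bar o(s)$) of elements with given $r$ (resp.\ $s$), so the full character of the $\Gal(L/\Q)$-representation on $\C^4$ is reconstructed from $z(C)$ alone. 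Since this is a faithful representation (the component group embeds in $\USp(4)/\Unitary(1)$), its character pins down the group up to $\USp(4)$-conjugacy.

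Finally, this character suffices to single out $\ST(\Jac(C))$ from the finite list of candidates, which I would verify by direct inspection: for each of the $20$ (resp.\ $21$) admissible triples $T(C)$ listed in Table \ref{Table: candidates2} (resp.\ Table \ref{Table: candidates3}), compute the multiset of traces implied by $z(C)$ and match it against the $\Unitary(1)$-type entries of \cite{FKRS}. The main obstacle is bookkeeping rather than anything conceptual: the substantive input has already been assembled in Propositions \ref{proposition:norm}, \ref{proposition:trtheta}, and \ref{proposition:trthetaQ}, which express everything needed about the Sato-Tate representation as a function of the combinatorial data $z(C)$, and the classification in \cite{FKRS} ensures that no two of the candidate groups share the same character.
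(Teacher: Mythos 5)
Your overall strategy (reduce to the explicit classification in \cite{FKRS} and compute the distinguishing invariants of $\ST(\Jac(C))$ from $z(C)$ using the results of \S\ref{section: squares}--\ref{section: curves}) is the right one, but the invariant you propose to use --- the trace character of the standard $4$-dimensional representation --- is both miscomputed and insufficient. The quantity $2+\zeta_r+\overline\zeta_r$ from Proposition \ref{proposition:trtheta} is the trace of $\theta_M(\Jac(C))(\tau)$ acting on $\End(\Jac(C)_L)\otimes_{M,\sigma}\Qbar$, not the trace of an element of $\USp(4)$: note $2(2+\zeta_1+\overline\zeta_1)=8$ exceeds $4$, and the trace is not even constant on a component of $G^{ns}$, where the eigenvalues are $u,\bar u,\zeta_r u,\overline\zeta_r\bar u$ with $u$ ranging over $\Unitary(1)$. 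Likewise, Proposition \ref{proposition:trthetaQ} lists the eigenvalues of $\theta_\Q(E,A)(\tau)$, which are quotients of Frobenius eigenvalues, not the eigenvalues of the corresponding element of the Sato-Tate group.

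More seriously, the trace character does not separate the candidate groups. Every component of $G\setminus G^{ns}$ has trace identically $0$ (the five possible characteristic polynomials in (\ref{equation: five possibilities}) are even), so for instance $J(C_2)$ and $D_{2,1}$ have identical trace distributions and are distinguished only by the coefficient of $T^2$ on those components (one finds $\M_1[a_2]$ equal to $1$ and $2$, respectively). This is exactly why the paper's proof works with the vector $z_2(G)$, which records the quadratic coefficients of the constant characteristic polynomials on the components of $G\setminus G^{ns}$: by \cite[Thm.~4.3]{FKRS}, the $18$ groups with $G^0\simeq\Unitary(1)$ occurring over $\Q$ are determined by the isomorphism classes of $G/G^0\simeq\Gal(K/\Q)$ and $G^{ns}/G^{ns,0}\simeq\Gal(K/M)$ together with $z_2(G)$, and Proposition \ref{proposition: trthetaMEA}(iii) yields $z_2(G)\cdot[L\colon K]=z_2(C)$. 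To repair your argument you would need to replace ``character'' by the full distribution of characteristic polynomials, i.e., track $a_2$ as well as $a_1$ on each component, which amounts to the same bookkeeping the paper performs.
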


\begin{proof} The 18 Sato-Tate groups $G$ that can occur over $\Q$ with $G^0\simeq\Unitary(1)$ (see \cite[Theorem 4.3]{FKRS}) are uniquely determined by the combination of:
\begin{enumerate}[(a)]
\setlength\itemindent{20pt}
\item the isomorphism classes of the groups $G/G^0$ and $G^{ns}/G^{ns,0}$, 
\item the vector $z_2(G)=(z_{2,2}(G),z_{2,-2}(G),z_{2,-1}(G),z_{2,0}(G),z_{2,1}(G))$,\footnote{Following the notation of \cite{FKRS}, recall that $z_{2,i}(G)$ denotes the number of connected components of $G$ all of whose elements have a constant characteristic polynomial, for which the coefficient of the quadratic term is equal to $i$. Note that the components of the vector $z_2(G)$ have been permuted with respect to the definition of $z_2(G)$ given in \cite{FKRS}.}
\end{enumerate}
where $G^{ns}$ is the index  2 subgroup of $G$ obtained by removing from $G$ those components all whose elements have a constant characteristic polynomial.

On the one hand, the isomorphism classes of the groups $G/G^0$ and $G^{ns}/G^{ns,0}$ are determined by $T(C)$, since $G/G^0\simeq \Gal(K/\Q)$ and $G^{ns}/G^{ns,0}\simeq\Gal(K/M)$. On the other hand, $z_2(G)$ is determined by $z(C)$; indeed, it follows from the construction of the Sato-Tate group in terms of the image of the $\ell$-adic representation attached to $\Jac(C)$ and from assertion (iii) of Proposition \ref{proposition: trthetaMEA}, that $z_2(G)\cdot[L\colon K]=z_2(C)$.
\end{proof}

\begin{corollary}\label{corollary: num g(C)} For each triple $[H,H/N,H_0]$ in Tables \ref{Table: candidates2} or \ref{Table: candidates3}, there exists a twist $C$ of $C^0$ such that $T(C)=[H,H/N,H_0]$ if and only if the corresponding row in the table is not marked with an asterisk.
Thus, for $C^0=C^0_2$ (resp.\ for $C^0=C^0_3$) there are exactly $20$ (resp.\ $21$) possibilities for $T(C)$.
\end{corollary}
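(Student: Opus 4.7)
The claim has two parts: realizability of the non-asterisked triples, and non-realizability of the asterisked ones. My plan is to handle these separately, and then conclude the count $20$ (resp.\ $21$) by subtracting the number of asterisks from the $23$ candidates produced by Proposition~\ref{proposition: candidates}.

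For the realizability direction, the plan is to defer to the explicit construction in Section~\ref{section: computations}. For each non-asterisked row in Tables~\ref{Table: candidates2} and~\ref{Table: candidates3}, the strategy is to exhibit a concrete $\Q$-twist $C$ of $C^0$ (by giving a hyperelliptic equation) and to compute $T(C)$. The computation proceeds in two stages: first, one uses Proposition~\ref{proposition: trthetaMEA} to derive $a_1(\Jac(C))(p)$ and $a_2(\Jac(C))(p)$ from $a_1(E^0)(p)$ very cheaply, tabulates an approximation of $z(C)$ over a large range of primes, and provisionally matches it to a row via Proposition~\ref{proposition: candidates}(ii); second, one certifies the provisional identification by explicitly computing the three Galois groups $\Gal(L/\Q)$, $\Gal(K/\Q)$, $\Gal(L/M)$ in a computer algebra system. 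Since $T(C)$ and $z(C)$ determine each other by Proposition~\ref{proposition: candidates}(ii), matching either invariant suffices.

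For the non-realizability direction, the plan is a case-by-case analysis of the asterisked triples, arguing that the necessary conditions of Lemma~\ref{lemma: primera} used to build $\mathcal L$ are not sufficient. Given a hypothetical $\Q$-twist $C$ with $\tilde T(C)$ in the conjugacy class of an asterisked $(H,N,H_0)$, the embedding $\lambda_\phi\colon \Gal(L/\Q)\hookrightarrow G_{C^0}$ must come from a cocycle in $H^1(G_\Q, \Aut(C^0_\Qbar))$ and must be compatible with the arithmetic constraints recorded earlier. In particular, the orders $(s(\tau),r(\tau),t(\tau))$ of the elements $\tau\in\Gal(L/\Q)$ are forced to lie in the $13$-entry list of Proposition~\ref{proposition: trthetaMEA}(i), and the image $\lambda_\phi(\Gal(L/\Q))$ in $G_{C^0}$ must be $\Gal(M/\Q)$-compatible in the sense that its projection onto $\Gal(M/\Q)$ is the standard one. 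For each asterisked row, I would verify in a computer algebra system that no subgroup of $G_{C^0}$ conjugate to $H$ admits an embedding $\Gal(L/\Q)\hookrightarrow G_{C^0}$ satisfying both the cocycle condition and these order constraints.

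The main obstacle will be isolating a uniform and checkable obstruction for the asterisked triples: the list $\mathcal L$ is built from purely abstract group-theoretic data, so one has to reintroduce the extra structure carried by a genuine twist (the Galois action on $\Aut(C^0_M)$ encoded in $G_{C^0}$, and the compatibility of the orders $(s,r,t)$ with Table~\ref{table:trthetaMEA}) in order to rule out the spurious candidates. In practice, this is most efficiently done by enumerating, for each conjugacy class in $\mathcal L$, all subgroups $H\subseteq G_{C^0}$ of the right isomorphism type and checking the above conditions directly; the asterisked rows are precisely those for which this enumeration returns no valid realization.
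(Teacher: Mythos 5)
Your plan for the realizability direction coincides with the paper's: the non-asterisked triples are realized by the explicit curves of Tables \ref{Table: curves2} and \ref{Table: curves3}, found by the search of \S\ref{section: search in a box} and certified by the Galois-group computations of \S\ref{section: computationKL}. That half is fine.

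The non-realizability direction is where your proposal has a genuine gap. You propose to rule out the asterisked triples by enumerating subgroups of $G_{C^0}$ and checking the conditions of Lemma \ref{lemma: primera} together with the order constraints of Proposition \ref{proposition: trthetaMEA}(i). But this cannot succeed: the asterisked triples appear in Tables \ref{Table: candidates2} and \ref{Table: candidates3} precisely \emph{because} they pass every such group-theoretic test --- they are elements of the list $\mathcal L$, i.e.\ they arise from actual subgroups $H\subseteq G_{C^0}$ with $|H\cap\Aut(C^0_M)|=|H|/2$ and $N=\langle(w,1)\rangle\cap H$, and the admissible triples $(s,r,t)$ in Table \ref{table:trthetaMEA} were themselves derived from the structure of $G_{C^0}$, so any subgroup in $\mathcal L$ automatically satisfies them. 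There is no ``uniform checkable obstruction'' visible inside $G_{C^0}$. The obstruction is arithmetic and specific to the base field $\Q$: the asterisked rows correspond to the Sato-Tate groups $J(C_1)$, $J(C_3)$, and $C_{4,1}$, which by \cite[Prop.~4.11]{FKRS} cannot arise for an abelian surface defined over $\Q$ (they do arise over suitable larger number fields, which is exactly why no purely group-theoretic computation in $G_{C^0}$ can exclude them). The paper's proof of this direction is simply the citation of that result, combined with Proposition \ref{proposition: det ST}, which shows that $T(C)$ determines $\ST(\Jac(C))$; your argument needs this external arithmetic input and cannot replace it by the enumeration you describe.
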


\begin{proof} Observe that the triples marked with an asterisk in Tables \ref{Table: candidates2} and \ref{Table: candidates3} correspond to Sato-Tate groups (equivalently, Galois types) that cannot arise for abelian surfaces defined over $\Q$ (see  \cite[proposition 4.11]{FKRS}). For each of the triples $[H,H/N,H_0]$ that are not marked with an asterisk, a curve $C$ with $T(C)=[H,H/N,H_0]$ is exhibited in Tables \ref{Table: curves2} and \ref{Table: curves3} (for details on how the curves have been found, see \S\ref{section: search in a box}; for details on how $T(C)$ is computed for each of the curves see \S\ref{section: computationKL}.)
\end{proof}

\begin{remark} Observe that if the triple $[H,H,H_0]$ appears in Table \ref{Table: candidates2} or \ref{Table: candidates3}, then the triple $[H\times \cyc 2, H,H_0\times \cyc 2]$ also appears. In other words, if there exists a twist $C$ of $C^0$ such that $\Gal(L/\Q)=\Gal(K/\Q)$, then there exists a twist $C'$ of $C^0$ such that $\Gal(K'/\Q)=\Gal(K/\Q)$ and $\Gal(L'/\Q)\simeq \Gal(K/\Q)\times \cyc 2$. Here $K'$ (resp.\ $L'$) is the minimal field over which all the automorphisms of $C'$ (resp.\ all the isomorphisms between $C'$ and $C^0$) are defined. Indeed, if $C$ is given by the hyperelliptic equation $y^2=f(x)$, let $C'$ be the curve given by $dy^2=f(x)$, where $d\in\Q^*$ is not a square in $K$. We will use this remark in $\S$\ref{section: computations} for the computation of some of the curves.
\end{remark}

\begin{remark}
Among the $18$ Sato-Tate groups with identity component $\Unitary(1)$ that can occur over $\Q$, there are $13$ that are subgroups of $J(O)$ and $11$ that are subgroups of $J(D_6)$ ($6$ are subgroups of both). From Table \ref{Table: curves3} we see that the $13$ that are subgroups of $J(O)$ can all occur as $\Q$-twists of $C^0_2$, and the $11$ that are subgroups of $J(D_6)$ can all occur as $\Q$-twists of $C^0_3$.
\end{remark}

\section{Numerical computations}\label{section: computations}

We now describe the methods used to obtain the example curves $C$ listed in Tables \ref{Table: curves2} and \ref{Table: curves3}.
As in \S\ref{section: curves}, each curve $C$ is a $\Q$-twist of $C^0=C_d^0$, for $d=2,3$, where $\Jac(C_d^0)\sim (E_d^0)^2$ and $E_d^0$ is an elliptic curve with CM by $M=\Q(\sqrt{-d})$.
For $d=2$ we list 20 curves $C$ that are $\Q$-twists of the curve $C_2^0$ defined by $y^2=x^6-5x^4-5x^2+1$, realizing every possible triple $T(C)=[\Gal(L/\Q),\Gal(K/\Q),\Gal(L/M)]$ that can occur when $C$ is a $\Q$-twist of~$C_2^0$.
Recall that the fields $K$ and $L$ are the minimal fields of definition $\End(\Jac(C)_\Qbar)$ and $\Hom(\Jac(C)_\Qbar,E_\Qbar)$, respectively, as in Definition~\ref{definition: triple}.
Similarly, for $d=3$ we list 21 curves $C$ that are twists of the curve $C_3^0$ defined by $y^2=x^6+1$, realizing every possible triple $T(C)$ that can occur when $C$ is a $\Q$-twist of $C_3^0$.

For each of the two curves $C^0$ we followed the procedure outlined below:

\begin{enumerate}
\item
Generate a large set $S$ of $\Q$-twists of $C^0$.
\item
For each $C\in S$ compute a provisional value of the triple $T(C)$.
\item
Select a single representative $C$ for each distinct triple $T(C)$ and then verify the provisional value of $T(C)$ by explicitly computing the fields $K$ and $L$ and the triple $T(C)=[\Gal(L/\Q),\Gal(K/\Q),\Gal(L/M)]$.
\end{enumerate}
The purpose of the ``provisional" computation of $T(C)$ in step 2 is to avoid computing the fields $K$ and $L$ for all of the curves in $S$, which would have been infeasible.
Explicit computation of the fields $K$ and $L$ (and their Galois groups) for even a single curve $C$ can be quite time consuming, taking many hours or even days of computer time, and the sets $S$ that we used contained tens of thousands of curves.

In the rest of this section we fill in some of the details of the three steps listed above.

\subsection{Generating twists of \texorpdfstring{$C^0$}{}}\label{section: search in a box}
Explicit parameterizations of the families of twists of $C^0_3$ and $C^0_2$ are given by Cardona in \cite{Car01} and \cite{Car06}.
One can easily obtain a large set $S$ using these parameterizations.
However, the resulting curves tend to have large coefficients, making the computation of $K$ and $L$ more difficult, and the vast majority of curves in $S$ are likely to represent the generic case, where $\Gal(K/\Q)$ and $\Gal(L/\Q)$ are as large as possible.
In principle, one can control the isomorphism type of $\Gal(K/\Q)$ by placing appropriate constraints on the input parameters, but this is not enough to determine the Sato-Tate group, and it gives no control over $\Gal(L/\Q)$.

We instead adapted the search method used in \cite{FKRS}, generating $S$ by enumerating all curves of the form $y^2= \sum_{i=0}^6 c_ix^i$ satisfying coefficient bounds $|c_i|\le B_i$.
To quickly identify curves $C$ that are twists of $C^0$, we first compute $a_1(C)(p)$ for a handful of small primes $p$ that are inert in $M$, and immediately discard $C$ if $a_1(C)(p)\ne 0$ for any such $p$.
We then compute the absolute Igusa invariants of $C$, as defined in \cite{Igu}, and compare them to the corresponding values for $C^0$.
With the bounds $B_i$ chosen to encompass some $2^{50}$ curves with small coefficients, we obtain a set $S$ containing tens of thousand twists of $C^0$ in each case.

After applying the method in \S\ref{section: provisional T(C)} below to all of the curves in $S$, we had several candidate curves $C$ for every possible triple $T(C)$ that can arise when $C$ is defined over $\Q$ (the triples listed in Tables~\ref{Table: candidates2} and~\ref{Table: candidates3} that are not marked with an asterisk).  We then selected a single representative $C$ for each triple and computed $K$ and $L$ for each of these $C$, as described in \S\ref{section: computationKL}, and then computed the Galois groups $\Gal(L/\Q)$, $\Gal(K/\Q)$, and $\Gal(L/M)$, using the Magma computer algebra system \cite{Magma}, to obtain the true value of triple $T(C)$.
As expected, this computation confirmed the provisional value in every case.

\subsection{Provisional computation of \texorpdfstring{$T(C)$}{}}\label{section: provisional T(C)}

To provisionally identify the triple $T(C)$, we compute an approximation of the vector $z(C)$ (see Definition~\ref{definition: zvector}), which uniquely determines $T(C)$, by Theorem~\ref{theorem: main result}.
To do this, it suffices to determine the triples $(s,r,t)$ of residue degrees $(f_L(p),f_K(p),f_M(p))$ for a sample set of primes $p$ (say, primes $p\le 2^{16}$ of good reduction for $C$), and then count how often each triple appears.
The components $o(s,r)$ and $\bar{o}(s,r)$ of the vector $z(C)$ may be approximated by computing the relative frequencies of the triples $(s,r,1)$ and $(s,r,2)$, respectively, and normalizing so that $o(1,1)=1$.

We can easily compute $t=f_M(p)\in\{1,2\}$ by checking whether $p$ splits in~$M$, but we also need to compute $r=f_K(p)$ and $s=f_L(p)$, and we would like to do so \emph{without knowing $K$ or~$L$}.
This can be achieved as follows: we first compute $a_1(E^0)(p)$ and the values $a_1(C)(p)$ and $a_2(C)(p)$, as described in \S \ref{section: a1,a2 from a}, and then determine the unique map $F_{(s,r,t)}$ from Proposition~\ref{proposition: trthetaMEA} for which
\begin{equation}\label{eq:Fsrt}
F_{(s,r,t)}(a_1(E^0)(p)) = (\pm a_1(C)(p),a_2(C)(p)).
\end{equation}

\subsubsection{Computation of \texorpdfstring{$a_1(C)(p) \text{ and } a_2(C)(p)$}{}}\label{section: a1,a2 from a}
Efficient computation of $a_1(C)(p)$ and $a_2(C)(p)$  for an arbitrary genus~2 curve is addressed in \cite{KS08}, but in the special case of interest here, where $C$ is a $\Q$-twist of $C^0$, we use a faster approach.
The Jacobian of $C^0$ is $\Q$-isogenous to the square of $E^0$, an elliptic curve defined over $\Q$.  Because $E^0$ has complex multiplication, we can very efficiently determine $a_1(E)(p)$.  Taking $C^0_2$ as an example, $E^0_2$ is defined by the Weierstrass equation $y^2=x^3-5x^2-5x+1$.  This curve has CM by $M=\Q(\sqrt{-2})$, and for any prime $p>2$ we may compute $a=a_1(E^0_2)(p)$ as follows: $a=0$ if $p$ is inert in $M$ and otherwise $a=4x/\sqrt{p}$, where the integer $x$ satisfies $p=x^2+2y^2$ for some integer~$y$. The positive integer $z=|x|$ may be determined via Cornacchia's algorithm, and then $x=(-1)^\epsilon z$, where $\epsilon = (z-1)/2  + (p-1)(p+5)/16$; see \cite{RS} for details.
The computation for $C^0_3$ is similar: in this case $E^0_3$ is defined by $y^2=x^3+1$, with CM by $M=\Q(\sqrt{-3})$.

Having computed $a_1(E^0)(p)$, there are only a handful of pairs $(a_1,a_2)$ that are compatible with (\ref{eq:Fsrt}), that is, for which there exists a triple $(s,r,t)$ such that $F_{(s,r,t)}(a_1(E^0)(p)) = (\pm a_1, a_2)$.
Taking into account whether $C$ is a twist of $C^0_2$ or $C^0_3$, whether $p$ splits in $M$ or not, and that the sign of $a_1$ is actually ambiguous in only 2 cases, there are at most 8 possibilities.
Each compatible pair $(a_1,a_2)$ determines an integer
\[
n=p^2+p^{3/2}a_1+pa_2+p^{1/2}a_1+1,
\]
one of which is equal to $\#\Jac(C)(\F_p)$.
In most cases, if we pick a random point $P\in\Jac(C)(\F_p)$, the equation $nP=0$ will hold for exactly one $n$ and uniquely determine $a_1$ and $a_2$.
Even when this is not the case, after factoring the integers $n$ we can determine the order of any point $P$ in $\Jac(C)(\F_p)$, using just $\tilde{O}(\log p)$ operations in $\F_p$; see \cite[Ch.~7]{S07}.
This allows us to compute the order of $\Jac(C)(\F_p)$ using a probabilistic generic group algorithm (of Las Vegas type)  that runs in $O(p^{1/4})$ expected time; see \cite{S07} and \cite[Prop.~1]{KS08}.\footnote{The $O(p^{1/4})$ bound is a worst case estimate, it is faster than this for most $p$}.  This compares to an $O(p^{3/4})$ expected running time for an arbitrary genus 2 curve using a generic group algorithm.\footnote{As noted in \cite{KS08}, the asymptotically faster polynomial-time algorithm of Pila \cite{Pila90} is not practically useful in the range of $p$ relevant to the computations considered here.}

Having computed $L_p(C,1)=\#\Jac(C)(\F_p)$, we use the same method to determine $L_p(C,-1)=\#\Jac(\tilde{C})(\F-p)$, where $\tilde{C}$ is any non-trivial quadratic twist of $C$ over~$\F_p$, and these two values uniquely determines $a_1$ and $a_2$.

The algorithm described above is included in the most recent version of the \texttt{smalljac} software library, whose source code is available at \cite{S11b}.

\subsection{Computation of  \texorpdfstring{$K$}{} and  \texorpdfstring{$L$}{}} \label{section: computationKL}

In this section, we describe the procedure used to compute the fields $K$ and $L$ for the curves $C$ listed in Tables \ref{Table: curves2} and \ref{Table: curves3}.

For the field $K$, its characterization in Lemma \ref{lemma: automorphisms vs endomorphisms} as the minimal field over which all the automorphisms of $C$ are defined turns out to be the most computationally effective. For all 41 curves $C\colon y^2=f(x)$ listed in Tables~\ref{Table: curves2} and \ref{Table: curves3}, one readily checks that $\Aut(C^0_\Qbar)\simeq\Aut(C_\Qbar)=\Aut(C_{F(\zeta_{24})})$, where $F$ is the splitting field of $f(x)$ (see Remark \ref{remark: in general} below). It is then a finite problem to identify the minimal subfield $K$ of $F(\zeta_{24})$ for which $\Aut(C_K)=\Aut(C_{F(\zeta_{24})})$.

Having computed $K$, we determine $L$ as follows.
For any non-hyperelliptic involution $\beta\in\Aut(C^0_M)$, the elliptic quotient $C^0/\langle\beta\rangle$ is defined over~$M$.
If $\beta_1$ and $\beta_2$ are conjugate in $\Aut(C^0_M)$, then $C^0/{\langle \beta_1\rangle }\simeq C^0/\langle\beta_2\rangle$.
For $C^0_2$ there is just one conjugacy class of non-hyperelliptic involutions, hence in this case every elliptic quotient $C^0/\langle\beta\rangle$ is isomorphic to $E^0_M$.
For $C^0_3$ there are two conjugacy classes of non-hyperelliptic involutions, of size 2 and 6 (see Table~\ref{table: charactertable3}).
The first corresponds to the $M$-isomorphism class of $E^0_3$, and the second corresponds to the $M$-isomorphism class of the elliptic curve $y^2=x^3-15x+22$.

Since we know $K$ explicitly, we can compute $\Aut(C_K)$ and enumerate all the non-hyperelliptic involutions $\alpha$ (there are 12 when $d=2$ and 8 when $d=3$).
For $d=2$, we pick any $\alpha$, and for $d=3$ we pick $\alpha$ from the conjugacy class of size 2.
Define $\tilde{E}:=C_K/\langle \alpha\rangle$ and $\tilde E^0 := C^0_M/\langle\phi\alpha\phi^{-1}\rangle$.
The isomorphism $\phi$ induces an isomorphism $\tilde\phi\colon\tilde E_L\rightarrow \tilde E^0_L$.
As in the proof of Proposition \ref{proposition: degree}, $L$ is the compositum of $K$ and the minimal field over which $\tilde\phi$ is defined.
Our choice of $\alpha$ insures that $\tilde E^0\simeq E^0_M$, thus $\tilde{E}_L\simeq E^0_L$.

By applying \cite[Lemma 2.2]{CGLR99}, we can compute an explicit Weierstrass equation for $\tilde E$ of the form
$$\tilde E\colon Y^2=X^3+AX+B,\qquad \text{with $A$, $B\in K$,}$$
Writing $E^0$ in the form $Y^2=X^3+UX+V$, there then exists $\gamma\in L$ such that $U=\gamma^4 A$ and $ V=\gamma^6 B$, and $\gamma$ generates $L$ as an (at most quadratic) extension of $K$.  We can easily derive $\gamma$ from the coefficients $A$, $B$, $U$, and $V$.

\begin{remark}\label{remark: in general} In fact, it is true in general that for any twist $C$ of $C^0_2$ (resp. $C^0_3$), the field $K$ is contained in $F(\sqrt{-2})$ (resp. $F(\sqrt{-3},i)$). We thank J. Quer for kindly providing the following argument. 

Let $\Aut(C_\Qbar)^*$ denote the subgroup of $\Aut(C_\Qbar)$ generated by those elements~$\alpha$ such that $\Trace(\iota(\alpha))$ is nonzero. We claim that $\Aut(C_\Qbar)^*=\Aut(C_{FM})^*$. Let $\WP(C)$ denote the set of Weierstrass points of $C$ and let $\sigma$ be an element of $G_{FM}$. It suffices to show that ${}^ \sigma\alpha=\alpha$ for every $\alpha$ in   $\Aut(C_\Qbar)^*$ such that $\Trace(\iota(\alpha))$ is nonzero. Observe that for every $P$ in $\WP(C)$, one has ${}^{\sigma}P=P$. Then, writing $Q=\alpha^ {-1}(P)$, we have
$$
^{\sigma}\alpha\circ\alpha^{-1}(P)=({}^{\sigma}\alpha)(Q)={}^{\sigma}(\alpha(Q))={}^{\sigma}P=P\,,
$$
which implies that ${}^{\sigma}\alpha$ is either $\alpha$ or $w\alpha$,
since the action of $\Aut(C_\Qbar)/\langle w\rangle$ on $\WP(C)$ is faithful. Provided that $\Trace(\iota(\alpha))$ is in $M$, the latter option is not possible, since otherwise we would have
$$
\Trace(\iota(\alpha))={}^{\sigma}\Trace(\iota(\alpha))=\Trace(\iota(w\alpha))=-\Trace(\iota(\alpha))\,,
$$
contradicting the fact that $\Trace(\iota(\alpha))$ is nonzero. Since $\Aut(C_\Qbar)$ and $\Aut(C^0_\Qbar)$ are conjugated, the groups $\Aut(C_\Qbar)^*$ and $\Aut(C^0_\Qbar)^*$ are isomorphic. It is straightforward to check that 
$$
\Aut((C^0_2)_\Qbar)^*\simeq \symtilde 4\qquad 
\text{and}\qquad
\Aut((C^0_3)_\Qbar)^*\simeq \cyc 2\times \cyc 6\,.
$$ 
Thus, for every twist $C$ of $C^0_2$, the field $K$ is contained in $F(\sqrt{-2})$; but for a twist $C$ of $C^0_3$ the order of $\Aut((C^0_3)_{F(\sqrt{-3})})$ can be $12$ or $24$. By considering the  parameterizations given by Cardona \cite[Prop. 7.4.1]{Car01} of all the twists $C$ of $C^0_3$ as well as of the corresponding embeddings $\iota(\Aut(C_\Qbar))$ in $\GL_2(\Qbar)$, one may explicitly verify that $K$ is always contained in $F(\sqrt{-3},i)$. 
\end{remark}

\subsubsection{An example}
Consider the twist $C$ of $C^0_3$ defined by the hyperelliptic equation
$$y^2 = f(x)= x^6 + 15x^4 + 20x^3 + 30x^2 + 18x + 5\,$$
over $\Q$.
This curve is listed in Table~\ref{Table: curves3} for the triple $[\langle 24,5\rangle,\langle 12,4\rangle,\langle 12,1\rangle]$.
Let us prove that this is in fact the triple $T(C)=[\Gal(L/\Q),\Gal(K/\Q),\Gal(L/M)]$.

We first compute $K$.
Let $F$ denote the splitting field of $f(x)$.
One checks (via Magma) that $|\Aut(C_{MF})|=24$, where $M=\Q(\sqrt{-3})$, and therefore $K\subseteq MF$ (since we know \emph{a priori} that $|\Aut(C_K)|=|\Aut((C^0_3)_\Qbar)|=24$).
By enumerating the various subfields of $MF$, we find that the minimal subfield $K$ of $MF$ for which $|\Aut(C_K)|=24$ is $K=M(\sqrt 5,a)$, where $a^3+3a-1=0$.

To compute $L$, we choose the non-hyperelliptic involution $\alpha$ of $\Aut(C_K)$ whose image under the map $\iota\colon \Aut(C_{\Qbar})\rightarrow \GL_2(\Qbar)$ defined in (\ref{equation: injection}) is
$$\iota(\alpha)=
\frac{1}{5}\begin{pmatrix}
\sqrt{5} & -2\sqrt{5}\\
-2\sqrt{5} & -\sqrt{5}
\end{pmatrix}\,.
$$
Applying \cite[Lemma 2.2]{CGLR99} yields a Weierstrass equation for $\tilde E=C/\langle \alpha\rangle$:
$$\tilde E\colon Y^2=X^3 +B, \qquad\text{with $B=-\frac{11}{97656250}\sqrt{5} + \frac{1}{3906250}$}\,.$$
Since $E^0$ is the curve $y^2=x^3+1$, we have $U=0$ and $V=1$, so $\gamma^6=1/B$.
This implies that
$$\gamma^2 - \left(\frac{125}{2}\sqrt{5} + \frac{375}{2}\right)a^2 +\left(\frac{125}{2}\sqrt{5} + \frac{125}{2}\right)a - 125\sqrt{5} - 375=0,$$
and one finds that $L=K(\sqrt{2\sqrt{5}+10})$.

Having explicitly computed the fields $K$ and $L$, it is then straight-forward to verify that $\Gal(L/\Q)\simeq\langle 24,5\rangle$, $\Gal(K/\Q)\simeq\langle 12,4\rangle$, and $\Gal(L/M)\simeq\langle 12,1\rangle$ using Magma.

\section{Tables}\label{section: tables}
This section contains tables described in earlier sections, whose definitions we briefly recall. Remember that $C^0$ is one of the two curves $C^0_3\colon y^2=x^6+1$ (in which case $M=\Q(\sqrt{-3}$) or $C^0_2\colon y^2=x^6-5x^4-5x^2+1$ (in which case $M=\Q(\sqrt{-2}$).
Tables \ref{table: charactertable2} and \ref{table: charactertable3} are the character tables of $\Aut((C^0_3)_\Qbar)$ and $\Aut((C^0_2)_\Qbar)$.
Tables \ref{Table: candidates2} and \ref{Table: candidates3} list (up to isomorphism) the possible values of the triples $T(C)$ that can arise when $C$ is a $\Q$-twist of the curve $C^0$.

The computation of these tables is described in \S\ref{section: triples of Galois groups}.
Each triple $[H,H/N,H_0]$ is a possible value for $T(C)=[\Gal(L/\Q),\Gal(K/\Q,\Gal(L/M)]$, and is determined by a subgroup $H\subset G_{C^0}$ whose intersection with $\Aut(C_M^0)$ is an index 2 subgroup $H_0$ of $H$, where $N=H\cap Z(G_{C^0})$.

For each triple $T(C)$ we list the corresponding Sato-Tate group $G$ and its matching Galois type, as defined in \cite{FKRS}, as well as the vector $z(C)$ given by Definition \ref{definition: zvector}, all of which are uniquely determined by $T(C)$, by Theorem \ref{theorem: main result}.  As proven in \cite{FKRS}, the Sato-Tate groups $J(C_1)$, $J(C_3)$, and $C_{4,1}$ cannot arise for a genus 2 curve defined over $\Q$, and the corresponding rows in Tables \ref{Table: candidates2} and \ref{Table: candidates3} are marked with an asterisk.

In Tables \ref{Table: curves3} and \ref{Table: curves2} we list representative curves that realize every triple $T(C)$ that can occur when $C$ is defined over $\Q$.  For each curve we also give an explicit description of the fields $K$ and $L$, where $K$ is the minimal field for which $\Aut(C_K)=\Aut(C_\Qbar)$, and $L$ is the minimal extension of $K$ over which $C$ is isomorphic to $C^0$.
The methods used to obtain these curves and the computation of $K$ and $L$ are described in \S\ref{section: computations}.

\begin{table}[h]
\begin{center}
\caption{Character Table of $\Aut((C^0_2)_M)\simeq \langle 48,29\rangle$}
\label{table: charactertable2}
\vspace{6pt}
\begin{tabular}{rrrrrrrrr}
Class &  1a & 2a & 2b & 3a & 4a & 6a &  8a &  8b \\\hline
Size  &   1 & 1 & 12 & 8 & 6 & 8 &  6  & 6\\\hline
$\chi_1$  &  1 & 1 & 1 & 1 & 1 & 1  & 1 &  1\\
$\chi_2$  &  1 & 1 &-1 & 1 & 1 & 1 & -1 & -1\\
$\chi_3$  & 2 & 2 & 0 & -1 & 2 &-1 &  0 &  0\\
$\chi_4$  &  2 &-2 & 0 &-1 & 0 & 1 & $\sqrt{-2}$ & $-\sqrt{-2}$\\
$\chi_5$  &  2 &-2 & 0 &-1 & 0 & 1 & $-\sqrt{-2}$ & $\sqrt{-2}$\\
$\chi_6$  & 3 & 3  &1  &0  &-1 & 0 & -1 & -1\\
$\chi_7$  & 3 & 3  & -1 & 0 &-1&  0 &  1 &  1\\
$\chi_8$  & 4 &-4  &0   & 1 & 0& -1 &  0 &  0\\\hline
\end{tabular}
\bigskip
\caption{Character Table of $\Aut((C^0_3)_M)\simeq \langle 24,8\rangle$}
\label{table: charactertable3}
\vspace{6pt}
\begin{tabular}{rrrrrrrrrr}
Class &  1a & 2a & 2b & 2c & 3a & 4a  &   6a   &  6b & 6c\\\hline
Size  &   1 & 1 & 2 & 6 & 2 & 6  &   2  &   2 & 2\\\hline
$\chi_1$  & 1 & 1 & 1 & 1 & 1 & 1  &  1 & 1 & 1\\
$\chi_2$  & 1 & 1 & 1 &-1 & 1 &-1  &  1 & 1 & 1\\
$\chi_3$  &  1 & 1 & -1 & -1 &  1 &  1 & -1 &  -1 &  1\\
$\chi_4$  &  1 & 1 & -1 & 1 & 1 & -1 & -1 & -1 & 1\\
$\chi_5$  &  2 & 2 & -2 &  0 & -1 &  0  &  1 & 1 & -1\\
$\chi_6$  & 2  &-2&  0&  0 & 2 & 0  &   0  &   0 &-2\\
$\chi_7$ & 2 &  2 &  2 &  0 & -1 &  0 &   -1 &   -1& -1\\
$\chi_8$ &  2 &-2 & 0 & 0 & -1 & 0 & $-\sqrt{-3}$ & $\sqrt{-3}$ & 1\\
$\chi_9$ & 2 & -2 & 0 & 0 & -1 & 0 & $\sqrt{-3}$ & $-\sqrt{-3}$ & 1\\\hline
\end{tabular}

\end{center}
\end{table}

\begin{table}
\begin{center}
\caption{Triples for twists of $C^0_2$.}\label{Table: candidates2}
\footnotesize
\vspace{-4pt}
\begin{tabular}{llllll}
$G$ & $H$ & $H/N$ & $H_0$ & Galois type & $z(H,N,H_0)$\\\hline
$*J(C_1)$ & $\langle 4,1 \rangle$ & $\langle 2,1 \rangle$ & $\langle 2,1 \rangle$ & $\bF[\cyc{2},\cyc{1},\HH]$ & $[ 1, 1, 0, 0, 0, 0, 0, 0, 0, 0, 2, 0, 0, 0 ]$\\
$J(C_2)$ & $\langle 8,2 \rangle$ & $\langle 4,2 \rangle$ & $\langle 4,1 \rangle$ & $\bF[\dih{2},\cyc{2},\HH]$ & $[ 1, 1, 0, 0, 2, 0, 0, 0, 0, 2, 2, 0, 0, 0 ]$\\
$J(C_2)$ & $\langle 8,3 \rangle$ & $\langle 4,2 \rangle$ & $\langle 4,2 \rangle$ & $\bF[\dih{2},\cyc{2},\HH]$ & $[ 1, 1, 2, 0, 0, 0, 0, 0, 0, 2, 2, 0, 0, 0 ]$\\
$*J(C_3)$ & $\langle 12,2 \rangle$ & $\langle 6,2 \rangle$ & $\langle 6,2 \rangle$ & $\bF[\cyc{6},\cyc{3},\HH]$ & $[ 1, 1, 0, 2, 0, 2, 0, 0, 0, 0, 2, 0, 0, 4 ]$\\
$J(C_4)$ & $\langle 16,6 \rangle$ & $\langle 8,2 \rangle$ & $\langle 8,1 \rangle$ & $\bF[\cyc{4}\times\cyc{2},\cyc{4}]$ & $[ 1, 1, 0, 0, 2, 0, 0, 4, 0, 2, 2, 0, 4, 0 ]$\\
$J(D_2)$ & $\langle 16,11 \rangle$ & $\langle 8,5 \rangle$ & $\langle 8,3 \rangle$ & $\bF[\dih{2}\times\cyc{2},\dih{2}]$ & $[ 1, 1, 4, 0, 2, 0, 0, 0, 0, 6, 2, 0, 0, 0 ]$\\
$J(D_2)$ & $\langle 16,13 \rangle$ & $\langle 8,5 \rangle$ & $\langle 8,4 \rangle$ & $\bF[\dih{2}\times\cyc{2},\dih{2}]$ & $[ 1, 1, 0, 0, 6, 0, 0, 0, 0, 6, 2, 0, 0, 0 ]$\\
$J(D_3)$ & $\langle 24,6 \rangle$ & $\langle 12,4 \rangle$ & $\langle 12,4 \rangle$ & $\bF[\dih{6},\dih{3},\HH]$ & $[ 1, 1, 6, 2, 0, 2, 0, 0, 0, 6, 2, 0, 0, 4 ]$\\
$J(D_4)$ & $\langle 32,43 \rangle$ & $\langle 16,11 \rangle$ & $\langle 16,8 \rangle$ & $\bF[\dih{4}\times\cyc{2},\dih{4}]$ & $[ 1, 1, 4, 0, 6, 0, 0, 4, 0, 10, 2, 0, 4, 0 ]$\\
$J(T)$ & $\langle 48,33 \rangle$ & $\langle 24,13 \rangle$ & $\langle 24,3 \rangle$ & $\bF[\alt{4}\times\cyc{2},\alt{4}]$ & $[ 1, 1, 0, 8, 6, 8, 0, 0, 0, 6, 2, 0, 0, 16 ]$\\
$J(O)$ & $\langle 96,193 \rangle$ & $\langle 48,48 \rangle$ & $\langle 48,29 \rangle$ & $\bF[\sym{4}\times\cyc{2},\sym{4}]$ & $[ 1, 1, 12, 8, 6, 8, 0, 12, 0, 18, 2, 0, 12, 16 ]$\\
$C_{2,1}$ & $\langle 2,1 \rangle$ & $\langle 2,1 \rangle$ & $\langle 1,1 \rangle$ & $\bF[\cyc{2},\cyc{1},\M_2(\R)]$ & $[ 1, 0, 0, 0, 0, 0, 0, 0, 0, 1, 0, 0, 0, 0 ]$\\
$C_{2,1}$ & $\langle 4,2 \rangle$ & $\langle 2,1 \rangle$ & $\langle 2,1 \rangle$ & $\bF[\cyc{2},\cyc{1},\M_2(\R)]$ & $[ 1, 1, 0, 0, 0, 0, 0, 0, 0, 2, 0, 0, 0, 0 ]$\\
$*C_{4,1}$ & $\langle 8,1 \rangle$ & $\langle 4,1 \rangle$ & $\langle 4,1 \rangle$ & $\bF[\cyc{4},\cyc{2}]$ & $[ 1, 1, 0, 0, 2, 0, 0, 0, 0, 0, 0, 0, 4, 0 ]$\\
$D_{2,1}$ & $\langle 4,2 \rangle$ & $\langle 4,2 \rangle$ & $\langle 2,1 \rangle$ & $\bF[\dih{2},\cyc{2},\M_2(\R)]$ & $[ 1, 0, 1, 0, 0, 0, 0, 0, 0, 2, 0, 0, 0, 0 ]$\\
$D_{2,1}$ & $\langle 8,3 \rangle$ & $\langle 4,2 \rangle$ & $\langle 4,1 \rangle$ & $\bF[\dih{2},\cyc{2},\M_2(\R)]$ & $[ 1, 1, 0, 0, 2, 0, 0, 0, 0, 4, 0, 0, 0, 0 ]$\\
$D_{2,1}$ & $\langle 8,5 \rangle$ & $\langle 4,2 \rangle$ & $\langle 4,2 \rangle$ & $\bF[\dih{2},\cyc{2},\M_2(\R)]$ & $[ 1, 1, 2, 0, 0, 0, 0, 0, 0, 4, 0, 0, 0, 0 ]$\\
$D_{3,2}$ & $\langle 6,1 \rangle$ & $\langle 6,1 \rangle$ & $\langle 3,1 \rangle$ & $\bF[\dih{3},\cyc{3}]$ & $[ 1, 0, 0, 2, 0, 0, 0, 0, 0, 3, 0, 0, 0, 0 ]$\\
$D_{3,2}$ & $\langle 12,4 \rangle$ & $\langle 6,1 \rangle$ & $\langle 6,2 \rangle$ & $\bF[\dih{3},\cyc{3}]$ & $[ 1, 1, 0, 2, 0, 2, 0, 0, 0, 6, 0, 0, 0, 0 ]$\\
$D_{4,1}$ & $\langle 16,7 \rangle$ & $\langle 8,3 \rangle$ & $\langle 8,3 \rangle$ & $\bF[\dih{4},\dih{2}]$ & $[ 1, 1, 4, 0, 2, 0, 0, 0, 0, 4, 0, 0, 4, 0 ]$\\
$D_{4,1}$ & $\langle 16,8 \rangle$ & $\langle 8,3 \rangle$ & $\langle 8,4 \rangle$ & $\bF[\dih{4},\dih{2}]$ & $[ 1, 1, 0, 0, 6, 0, 0, 0, 0, 4, 0, 0, 4, 0 ]$\\
$D_{4,2}$ & $\langle 16,7 \rangle$ & $\langle 8,3 \rangle$ & $\langle 8,1 \rangle$ & $\bF[\dih{4},\cyc{4}]$ & $[ 1, 1, 0, 0, 2, 0, 0, 4, 0, 8, 0, 0, 0, 0 ]$\\
$O_1$ & $\langle 48,29 \rangle$ & $\langle 24,12 \rangle$ & $\langle 24,3 \rangle$ & $\bF[\sym{4},\alt{4}]$ & $[ 1, 1, 0, 8, 6, 8, 0, 0, 0, 12, 0, 0, 12, 0 ]$\\\hline
\end{tabular}
\end{center}
\end{table}

\begin{table}
\setlength{\extrarowheight}{1pt}
\begin{center}
\vspace{-10pt}
\caption{Triples for twists of $C^0_3$.}\label{Table: candidates3}
\footnotesize
\vspace{-4pt}
\begin{tabular}{llllll}
$G$ & $H$ & $H/N$ & $H_0$ & Galois type & $z(H,N,H_0)$\\\hline
$*J(C_1)$ & $\langle 4,1 \rangle$ & $\langle 2,1 \rangle$ & $\langle 2,1 \rangle$ & $\bF[\cyc{2},\cyc{1},\HH]$ & $[ 1, 1, 0, 0, 0, 0, 0, 0, 0, 0, 2, 0, 0, 0 ]$\\
$J(C_2)$ & $\langle 8,2 \rangle$ & $\langle 4,2 \rangle$ & $\langle 4,1 \rangle$ & $\bF[\dih{2},\cyc{2},\HH]$ & $[ 1, 1, 0, 0, 2, 0, 0, 0, 0, 2, 2, 0, 0, 0 ]$\\
$J(C_2)$ & $\langle 8,3 \rangle$ & $\langle 4,2 \rangle$ & $\langle 4,2 \rangle$ & $\bF[\dih{2},\cyc{2},\HH]$ & $[ 1, 1, 2, 0, 0, 0, 0, 0, 0, 2, 2, 0, 0, 0 ]$\\
$*J(C_3)$ & $\langle 12,2 \rangle$ & $\langle 6,2 \rangle$ & $\langle 6,2 \rangle$ & $\bF[\cyc{6},\cyc{3},\HH]$ & $[ 1, 1, 0, 2, 0, 2, 0, 0, 0, 0, 2, 0, 0, 4 ]$\\
$J(C_6)$ & $\langle 24,10 \rangle$ & $\langle 12,5 \rangle$ & $\langle 12,5 \rangle$ & $\bF[\cyc{6}\times\cyc{2},\cyc{6}]$ & $[ 1, 1, 2, 2, 0, 2, 4, 0, 0, 2, 2, 4, 0, 4 ]$\\
$J(D_2)$ & $\langle 16,11 \rangle$ & $\langle 8,5 \rangle$ & $\langle 8,3 \rangle$ & $\bF[\dih{2}\times\cyc{2},\dih{2}]$ & $[ 1, 1, 4, 0, 2, 0, 0, 0, 0, 6, 2, 0, 0, 0 ]$\\
$J(D_3)$ & $\langle 24,5 \rangle$ & $\langle 12,4 \rangle$ & $\langle 12,1 \rangle$ & $\bF[\dih{6},\dih{3},\HH]$ & $[ 1, 1, 0, 2, 6, 2, 0, 0, 0, 6, 2, 0, 0, 4 ]$\\
$J(D_3)$ & $\langle 24,6 \rangle$ & $\langle 12,4 \rangle$ & $\langle 12,4 \rangle$ & $\bF[\dih{6},\dih{3},\HH]$ & $[ 1, 1, 6, 2, 0, 2, 0, 0, 0, 6, 2, 0, 0, 4 ]$\\
$J(D_6)$ & $\langle 48,38 \rangle$ & $\langle 24,14 \rangle$ & $\langle 24,8 \rangle$ & $\bF[\dih{6}\times\cyc{2},\dih{6}]$ & $[ 1, 1, 8, 2, 6, 2, 4, 0, 0, 14, 2, 4, 0, 4 ]$\\
$C_{2,1}$ & $\langle 2,1 \rangle$ & $\langle 2,1 \rangle$ & $\langle 1,1 \rangle$ & $\bF[\cyc{2},\cyc{1},\M_2(\R)]$ & $[ 1, 0, 0, 0, 0, 0, 0, 0, 0, 1, 0, 0, 0, 0 ]$\\
$C_{2,1}$ & $\langle 4,2 \rangle$ & $\langle 2,1 \rangle$ & $\langle 2,1 \rangle$ & $\bF[\cyc{2},\cyc{1},\M_2(\R)]$ & $[ 1, 1, 0, 0, 0, 0, 0, 0, 0, 2, 0, 0, 0, 0 ]$\\
$C_{6,1}$ & $\langle 6,2 \rangle$ & $\langle 6,2 \rangle$ & $\langle 3,1 \rangle$ & $\bF[\cyc{6},\cyc{3},\M_2(\R)]$ & $[ 1, 0, 0, 2, 0, 0, 0, 0, 0, 1, 0, 2, 0, 0 ]$\\
$C_{6,1}$ & $\langle 12,5 \rangle$ & $\langle 6,2 \rangle$ & $\langle 6,2 \rangle$ & $\bF[\cyc{6},\cyc{3},\M_2(\R)]$ & $[ 1, 1, 0, 2, 0, 2, 0, 0, 0, 2, 0, 4, 0, 0 ]$\\
$D_{2,1}$ & $\langle 4,2 \rangle$ & $\langle 4,2 \rangle$ & $\langle 2,1 \rangle$ & $\bF[\dih{2},\cyc{2},\M_2(\R)]$ & $[ 1, 0, 1, 0, 0, 0, 0, 0, 0, 2, 0, 0, 0, 0 ]$\\
$D_{2,1}$ & $\langle 8,3 \rangle$ & $\langle 4,2 \rangle$ & $\langle 4,1 \rangle$ & $\bF[\dih{2},\cyc{2},\M_2(\R)]$ & $[ 1, 1, 0, 0, 2, 0, 0, 0, 0, 4, 0, 0, 0, 0 ]$\\
$D_{2,1}$ & $\langle 8,5 \rangle$ & $\langle 4,2 \rangle$ & $\langle 4,2 \rangle$ & $\bF[\dih{2},\cyc{2},\M_2(\R)]$ & $[ 1, 1, 2, 0, 0, 0, 0, 0, 0, 4, 0, 0, 0, 0 ]$\\
$D_{3,2}$ & $\langle 6,1 \rangle$ & $\langle 6,1 \rangle$ & $\langle 3,1 \rangle$ & $\bF[\dih{3},\cyc{3}]$ & $[ 1, 0, 0, 2, 0, 0, 0, 0, 0, 3, 0, 0, 0, 0 ]$\\
$D_{3,2}$ & $\langle 12,4 \rangle$ & $\langle 6,1 \rangle$ & $\langle 6,2 \rangle$ & $\bF[\dih{3},\cyc{3}]$ & $[ 1, 1, 0, 2, 0, 2, 0, 0, 0, 6, 0, 0, 0, 0 ]$\\
$D_{6,1}$ & $\langle 12,4 \rangle$ & $\langle 12,4 \rangle$ & $\langle 6,1 \rangle$ & $\bF[\dih{6},\dih{3},\M_2(\R)]$ & $[ 1, 0, 3, 2, 0, 0, 0, 0, 0, 4, 0, 2, 0, 0 ]$\\
$D_{6,1}$ & $\langle 24,8 \rangle$ & $\langle 12,4 \rangle$ & $\langle 12,1 \rangle$ & $\bF[\dih{6},\dih{3},\M_2(\R)]$ & $[ 1, 1, 0, 2, 6, 2, 0, 0, 0, 8, 0, 4, 0, 0 ]$\\
$D_{6,1}$ & $\langle 24,14 \rangle$ & $\langle 12,4 \rangle$ & $\langle 12,4 \rangle$ & $\bF[\dih{6},\dih{3},\M_2(\R)]$ & $[ 1, 1, 6, 2, 0, 2, 0, 0, 0, 8, 0, 4, 0, 0 ]$\\
$D_{6,2}$ & $\langle 12,4 \rangle$ & $\langle 12,4 \rangle$ & $\langle 6,2 \rangle$ & $\bF[\dih{6},\cyc{6}]$ & $[ 1, 0, 1, 2, 0, 0, 2, 0, 0, 6, 0, 0, 0, 0 ]$\\
$D_{6,2}$ & $\langle 24,14 \rangle$ & $\langle 12,4 \rangle$ & $\langle 12,5 \rangle$ & $\bF[\dih{6},\cyc{6}]$ & $[ 1, 1, 2, 2, 0, 2, 4, 0, 0, 12, 0, 0, 0, 0 ]$\\\hline
\end{tabular}
\end{center}
\end{table}
\bigskip
\medskip

\begin{table}
\begin{center}
\caption{Twists of $C^0_2\colon y^2=x^6-5x^4-5x^2+1$ realizing each triple.}\label{Table: curves2}
\vspace{8pt}
\scriptsize
\setlength{\extrarowheight}{1pt}
\begin{tabular}{llll}
$G$ & $[\Gal(L/\Q), \Gal(K/\Q), \Gal(L/M)]$ & $K$ & $L$\\\hline
\multicolumn{4}{l}{$\boldsymbol{y^2 = x^5-x}$}\\
$J(C_2)$ & $\langle 8,2 \rangle$, $\langle 4,2 \rangle$, $\langle 4,1 \rangle$ & $M(i)$ & $K\Bigl(\sqrt{\sqrt{2}+2}\Bigr)$\vspace{1pt}\\
\multicolumn{4}{l}{$\boldsymbol{y^2 = x^5+4x}$}\\
$J(C_2)$ & $\langle 8,3 \rangle$, $\langle 4,2 \rangle$, $\langle 4,2 \rangle$ &$M(i)$ & $K(\sqrt[4]{2})$\vspace{4pt}\\
\multicolumn{4}{l}{$\boldsymbol{y^2 = x^6 + x^5 - 5x^4 - 5x^2 - x + 1}$}\\
$J(C_4)$ & $\langle 16,6 \rangle$, $\langle 8,2 \rangle$, $\langle 8,1 \rangle$ & $M(\sqrt{\sqrt{17}+17})$ & $K\Bigl(\sqrt{(\sqrt{17}+3)\sqrt{\sqrt{17}+17}-8\sqrt{17}}\Bigr)$\vspace{1pt}\\
\multicolumn{4}{l}{$\boldsymbol{y^2 = x^5 + 9x}$}\\
$J(D_2)$ & $\langle 16,11 \rangle$, $\langle 8,5 \rangle$, $\langle 8,3 \rangle$ & $M(i,\sqrt{3})$ & $K(\sqrt[4]{3})$\vspace{4pt}\\
\multicolumn{4}{l}{$\boldsymbol{y^2 =x^5 - 9x}$}\\
$J(D_2)$ & $\langle 16,13 \rangle$, $\langle 8,5 \rangle$, $\langle 8,4 \rangle$ & $M(i,\sqrt{3})$ & $K(\sqrt[4]{3i})$\vspace{4pt}\\
\multicolumn{4}{l}{$\boldsymbol{y^2 =x^6 + 10x^3 - 2}$}\\
$J(D_3)$ & $\langle 24,6 \rangle$, $\langle 12,4 \rangle$, $\langle 12,4 \rangle$ & $M(\sqrt{-3},\sqrt[3]{-2})$ & $K\Bigl(\sqrt{\sqrt{6}-2}\Bigr)$\vspace{1pt}\\
\multicolumn{4}{l}{$\boldsymbol{y^2 =x^5 + 3x}$}\\
$J(D_4)$ & $\langle 32,43 \rangle$, $\langle 16,11 \rangle$, $\langle 16,8 \rangle$ & $M(i,\sqrt[4]{3})$ & $K(\sqrt[8]{3})$\vspace{4pt}\\
\multicolumn{4}{l}{$\boldsymbol{y^2 =x^6 + 6x^5 - 20x^4 + 20x^3 - 20x^2 - 8x + 8}$}\\
$J(T)$ & $\langle 48,33 \rangle$, $\langle 24,13 \rangle$, $\langle 24,3 \rangle$ & $M(u_1,u_2)$ & $K(\sqrt{v_1})$\vspace{4pt}\\
\multicolumn{4}{l}{$\boldsymbol{y^2 = x^6 - 5x^4 + 10x^3 - 5x^2 + 2x - 1}$}\\
$J(O)$ & $\langle 96,193 \rangle$, $\langle 48,48 \rangle$, $\langle 48,29 \rangle$ & $M(\sqrt{-11},u_3,u_4),$ & $K(\sqrt{v_2})$\vspace{4pt}\\
\multicolumn{4}{l}{$\boldsymbol{y^2 = x^6 - 5x^4 - 5x^2 + 1}$}\\
$C_{2,1}$ & $\langle 2,1 \rangle$, $\langle 2,1 \rangle$, $\langle 1,1 \rangle$ & $M$ & $K$\vspace{4pt}\\
\multicolumn{4}{l}{$\boldsymbol{y^2 =-x^6+5x^4+5x^2-1}$}\\
$C_{2,1}$ & $\langle 4,2 \rangle$, $\langle 2,1 \rangle$, $\langle 2,1 \rangle$ & $M$ & $K(i)$\vspace{4pt}\\
\multicolumn{4}{l}{$\boldsymbol{y^2 =x^5 + x}$}\\
$D_{2,1}$ & $\langle 4,2 \rangle$, $\langle 4,2 \rangle$, $\langle 2,1 \rangle$ & $M(i)$ & $K$\vspace{4pt}\\
\multicolumn{4}{l}{$\boldsymbol{y^2 =x^6 + 3x^5 - 20x^4 + 30x^3 - 35x^2 + 3x + 10}$}\\
$D_{2,1}$ & $\langle 8,3 \rangle$, $\langle 4,2 \rangle$, $\langle 4,1 \rangle$ & $M(\sqrt{7})$ & $K\Bigr(\sqrt{3\sqrt{7}+7}\Bigl)$\vspace{1pt}\\
\multicolumn{4}{l}{$\boldsymbol{y^2 =x^5 + 81x}$}\\
$D_{2,1}$ & $\langle 8,5 \rangle$, $\langle 4,2 \rangle$, $\langle 4,2 \rangle$ & $M(i)$ & $K(\sqrt{3})$\vspace{4pt}\\
\multicolumn{4}{l}{$\boldsymbol{y^2 =x^6 - 18x^5 - 15x^4 - 20x^3 + 135x^2 - 498x - 89}$}\\
$D_{3,2}$ & $\langle 6,1 \rangle$, $\langle 6,1 \rangle$, $\langle 3,1 \rangle$ & $M(u_5)$ & $K$\vspace{4pt}\\
\multicolumn{4}{l}{$\boldsymbol{y^2 =x^6 + 4x^5 - 10x^4 + 80x^3 + 140x^2 + 144x - 184}$}\\
$D_{3,2}$ & $\langle 12,4 \rangle$, $\langle 6,1 \rangle$, $\langle 6,2 \rangle$ & $M(u_6)$ & $K(i)$\vspace{4pt}\\
\multicolumn{4}{l}{$\boldsymbol{y^2 =x^5 - 2x}$}\\
$D_{4,1}$ & $\langle 16,7 \rangle$, $\langle 8,3 \rangle$, $\langle 8,3 \rangle$ & $M(i\sqrt[4]{-2})$ & $K(\sqrt[8]{-2})$\vspace{4pt}\\
\multicolumn{4}{l}{$\boldsymbol{y^2 =x^5 + 2x}$}\\
$D_{4,1}$ & $\langle 16,8 \rangle$, $\langle 8,3 \rangle$, $\langle 8,4 \rangle$ & $M(i\sqrt[4]{2})$ & $K(\sqrt[8]{2})$\vspace{4pt}\\
\multicolumn{4}{l}{$\boldsymbol{y^2 =x^6 + x^5 + 10x^3 + 5x^2 + x - 2}$}\\
$D_{4,2}$ & $\langle 16,7 \rangle$, $\langle 8,3 \rangle$, $\langle 8,1 \rangle$ & $M\Bigl(\sqrt{\sqrt{-7}+7}\Bigr)$ & $K\Bigl(\sqrt{-\sqrt{-2}\sqrt{\sqrt{-7}-7}+2\sqrt{-7}}\Bigr)$\vspace{1pt}\\
\multicolumn{4}{l}{$\boldsymbol{y^2 =x^6 + 7x^5 + 10x^4 + 10x^3 + 15x^2 + 17x + 4}$}\\
$O_1$ & $\langle 48,29 \rangle$, $\langle 24,12 \rangle$, $\langle 24,3 \rangle$ & $M(u_7,u_8)$ & $K\Bigl(\sqrt{-u_8^3+u_8^2+5u_8+4}\Bigr)$\\\hline
\end{tabular}
\bigskip

$u_1^3-7u_1+7 = u_2^4+4 u_2^2+8 u_2+8= u_3^3-4u_3+4=u_4^4+22u_4+22= u_5^3+6u_5-8 = 0$,\vspace{4pt}\\
$u_6^3+5u_6-10 = u_7^3+5u_7+10 = u_9^4+4u_9^2+8u_9+2=0$,\vspace{4pt}\\
$v_1^{12} - 12v_1^{11} + 70v_1^{10} - 236v_1^9 + 337v_1^8 - 40v_1^7 - 420v_1^6 + 452v_1^5 - 150v_1^4 + 16v_1^3 - 28v_1^2 + 8v_1 + 1=0$,\vspace{4pt}\\
$v_2^{12} + 44v_2^{11} + 682v_2^{10} + 4048v_2^9 + 3135v_2^8 - 19844v_2^7 + 306614v_2^6 + 1783540v_2^5 -$\\$ 5571929v_2^4 + 85184v_2^3 + 1269774v_2^2 - 1293732v_2 - 970299  = 0$,\vspace{4pt}\\
\end{center}
\end{table}

\begin{table}
\begin{center}
\caption{Twists of $C^0_3\colon y^2=x^6+1$ realizing each triple.}\label{Table: curves3}
\vspace{8pt}
\scriptsize
\setlength{\extrarowheight}{1pt}
\begin{tabular}{llll}
$G$ & $[\Gal(L/\Q), \Gal(K/\Q), \Gal(L/M)]$ &$K$ &$L$\\\hline
\multicolumn{4}{l}{$\boldsymbol{y^2 = x^6 + 6x^5 + 30x^4 + 120x^2 - 96x + 64}$}\\
$J(C_2)$ & $\langle 8,2 \rangle$, $\langle 4,2 \rangle$, $\langle 4,1 \rangle$ & $M(\sqrt{5})$ & $K\Bigl(\sqrt{\sqrt{5}+5}\Bigr)$\vspace{1pt}\\
\multicolumn{4}{l}{$\boldsymbol{y^2 = x^5 + 10x^3 + 9x}$}\\
$J(C_2)$ & $\langle 8,3 \rangle$, $\langle 4,2 \rangle$, $\langle 4,2 \rangle$ & $M(i)$ & $K(\sqrt[4]{3})$\vspace{4pt}\\
\multicolumn{4}{l}{$\boldsymbol{y^2 = x^6 - 15x^4 - 20x^3 + 6x + 1}$}\\
$J(C_6)$ & $\langle 24,10 \rangle$, $\langle 12,5 \rangle$, $\langle 12,5 \rangle$ & $M(i,u_1)$ & $K(\sqrt[4]{3})$\vspace{4pt}\\
\multicolumn{4}{l}{$\boldsymbol{y^2 = x^5 + 20x^3 + 36x}$}\\
$J(D_2)$ & $\langle 16,11 \rangle$, $\langle 8,5 \rangle$, $\langle 8,3 \rangle$ & $M(i,\sqrt{2})$ & $K(\sqrt[4]{6})$\vspace{4pt}\\
\multicolumn{4}{l}{$\boldsymbol{y^2 = x^6 + 15x^4 + 20x^3 + 30x^2 + 18x + 5}$}\\
$J(D_3)$ & $\langle 24,5 \rangle$, $\langle 12,4 \rangle$, $\langle 12,1 \rangle$ & $M(\sqrt{5},u_1)$ & $K\Bigl(\sqrt{2\sqrt{5}+10}\Bigr)$\vspace{1pt}\\
\multicolumn{4}{l}{$\boldsymbol{y^2 = x^6 + 6x^5 + 40x^3 - 60x^2 + 72x - 32}$}\\
$J(D_3)$ & $\langle 24,6 \rangle$, $\langle 12,4 \rangle$, $\langle 12,4 \rangle$ & $M(i,u_2)$ & $K(\sqrt[4]{3})$\vspace{4pt}\\
\multicolumn{4}{l}{$\boldsymbol{y^2 = x^6 + 3x^5 + 10x^3 - 15x^2 + 15x - 6}$}\\
$J(D_6)$ & $\langle 48,38 \rangle$, $\langle 24,14 \rangle$, $\langle 24,8 \rangle$ & $M(i,\sqrt{2},u_3)$ & $K(\sqrt[4]{2})$\vspace{4pt}\\
\multicolumn{4}{l}{$\boldsymbol{y^2 = x^6 + 1}$}\\
$C_{2,1}$ & $\langle 2,1 \rangle$, $\langle 2,1 \rangle$, $\langle 1,1 \rangle$ & $M$ & $K$\vspace{4pt}\\
\multicolumn{4}{l}{$\boldsymbol{y^2 = x^6 + 15x^4 + 15x^2 + 1}$}\\
$C_{2,1}$ & $\langle 4,2 \rangle$, $\langle 2,1 \rangle$, $\langle 2,1 \rangle$ & $M$ & $K(\sqrt{2})$\vspace{4pt}\\
\multicolumn{4}{l}{$\boldsymbol{y^2 = -x^6 - 6x^5 + 30x^4 - 20x^3 - 15x^2 + 12x - 1}$}\\
$C_{6,1}$ & $\langle 6,2 \rangle$, $\langle 6,2 \rangle$, $\langle 3,1 \rangle$ & $M(u_1)$ & $K$\vspace{4pt}\\
\multicolumn{4}{l}{$\boldsymbol{y^2 = x^6 + 6x^5 - 30x^4 + 20x^3 + 15x^2 - 12x + 1}$}\\
$C_{6,1}$ & $\langle 12,5 \rangle$, $\langle 6,2 \rangle$, $\langle 6,2 \rangle$ & $M(u_1)$ & $K(i)$\vspace{4pt}\\
\multicolumn{4}{l}{$\boldsymbol{y^2 = x^6 - 1}$}\\
$D_{2,1}$ & $\langle 4,2 \rangle$, $\langle 4,2 \rangle$, $\langle 2,1 \rangle$ & $M(i)$ & $K$\vspace{4pt}\\
\multicolumn{4}{l}{$\boldsymbol{y^2 = 11x^6 + 30x^5 + 30x^4 + 40x^3 - 60x^2 + 120x - 88}$}\\
$D_{2,1}$ & $\langle 8,3 \rangle$, $\langle 4,2 \rangle$, $\langle 4,1 \rangle$ & $M(\sqrt{-2})$ & $K\Bigl(\sqrt{\sqrt{6}-2}\Bigr)$\vspace{1pt}\\
\multicolumn{4}{l}{$\boldsymbol{y^2 = x^6 - 15x^4 + 15x^2 - 1}$}\\
$D_{2,1}$ & $\langle 8,5 \rangle$, $\langle 4,2 \rangle$, $\langle 4,2 \rangle$ & $M(i)$ & $K(\sqrt{2})$\vspace{4pt}\\
\multicolumn{4}{l}{$\boldsymbol{y^2 = x^6 + 4}$}\\
$D_{3,2}$ & $\langle 6,1 \rangle$, $\langle 6,1 \rangle$, $\langle 3,1 \rangle$ & $M(\sqrt[3]{2})$ & $K$\vspace{4pt}\\
\multicolumn{4}{l}{$\boldsymbol{y^2 = x^6 + 12x^5 + 15x^4 + 40x^3 + 15x^2 + 12x + 1}$}\\
$D_{3,2}$ & $\langle 12,4 \rangle$, $\langle 6,1 \rangle$, $\langle 6,2 \rangle$ & $M(\sqrt[3]{3})$ & $K(\sqrt{-2})$\vspace{4pt}\\
\multicolumn{4}{l}{$\boldsymbol{y^2 = x^6 + 9x^5 - 60x^4 - 120x^3 + 240x^2 + 144x - 64}$}\\
$D_{6,1}$ & $\langle 12,4 \rangle$, $\langle 12,4 \rangle$, $\langle 6,1 \rangle$ & $M(i,u_4)$ & $K$\vspace{4pt}\\
\multicolumn{4}{l}{$\boldsymbol{y^2 = x^6 + 6x^5 - 30x^4 - 40x^3 + 60x^2 + 24x - 8}$}\\
$D_{6,1}$ & $\langle 24,8 \rangle$, $\langle 12,4 \rangle$, $\langle 12,1 \rangle$ & $M(\sqrt{-2},u_5)$ & $K\Bigl(\sqrt{\sqrt{6}-2}\Bigr)$\vspace{1pt}\\
\multicolumn{4}{l}{$\boldsymbol{y^2 = x^6 + 3x^5 + 15x^4 - 20x^3 + 60x^2 - 60x + 28}$}\\
$D_{6,1}$ & $\langle 24,14 \rangle$, $\langle 12,4 \rangle$, $\langle 12,4 \rangle$ & $M(\sqrt{-2},u_2)$ & $K(\sqrt{2})$\vspace{4pt}\\
\multicolumn{4}{l}{$\boldsymbol{y^2 = x^6 + 2}$}\\
$D_{6,2}$ & $\langle 12,4 \rangle$, $\langle 12,4 \rangle$, $\langle 6,2 \rangle$ & $M(\sqrt[6]{2})$ & $K$\vspace{4pt}\\
\multicolumn{4}{l}{$\boldsymbol{y^2 =x^6 + 6x^5 - 15x^4 + 20x^3 - 15x^2 + 6x - 1}$}\\
$D_{6,2}$ & $\langle 24,14 \rangle$, $\langle 12,4 \rangle$, $\langle 12,5 \rangle$ & $M(\sqrt{-2},u_6)$ & $K(i)$\vspace{4pt}\\\hline
\end{tabular}
\bigskip

$u_1^3-3u_1+1=u_2^3-3u_2+4=u_3^3+3u_3-2=u_4^3-15u_4-10=0$,\vspace{4pt}\\
$u_5^3-9u_5-6=u_7^3-6u_7-6 =0$.
\end{center}
\end{table}


\begin{thebibliography}{McK-Sta}

\bibitem[BK11]{BK}
G. Banaszak and K.S. Kedlaya, \emph{An algebraic Sato-Tate group and
Sato-Tate conjecture}, arXiv:1109.4449v1 (2011).

\bibitem[Magma]{Magma}
W. Bosma, J.J. Cannon, C. Fieker, and A. Steel (eds.), \emph{Handbook of {Magma} functions}, 2.16 edition, \url{http://magma.maths.usyd.edu.au/magma/handbook/}, 2010.

\bibitem[Car01]{Car01} G. Cardona, \emph{Models racionals de corbes de g\`enere
$2$}, Tesi Doctoral, Universitat Polit\`ecnica de Catalunya, 2001.

\bibitem[Car06]{Car06} G. Cardona, \emph{Representations of $G_k$-groups and twists of the
genus two curve $y^2=x^5-x$}, Journal of Algebra \textbf{303} (2006), 707--721.

\bibitem[CGLR99]{CGLR99} G. Cardona, J. Gonz\'alez, J-C. Lario, A. Rio, \emph{On curves of genus $2$ with Jacobian of $\GL_2-$type},
Manuscripta Math. \textbf{98} (1999), 37--54.

\bibitem[Fit10]{Fit10} F. Fit\'e, \emph{Artin representations attached to pairs of isogenous abelian varieties}, arXiv:1012.3390v1 (2010).

\bibitem[FKRS12]{FKRS} F. Fit\'e, K.S. Kedlaya, V. Rotger, A.V. Sutherland, \emph{Sato-Tate distributions and Galois endomorphism modules in genus $2$}, to appear in Compositio Mathematica, arXiv:1110.6638v2 (2012).

\bibitem[FL11]{FL11} F. Fit\'e, J-C. Lario, \emph{The twisting representation of the $L$-function of a curve}, arXiv:1012.3393v1 (2011).

\bibitem[GAP]{GAP4}
  The GAP~Group, \emph{GAP -- Groups, Algorithms, and Programming}, version 4.4.12, \url{http://www.gap-system.org}, 2008.

\bibitem[Gr80]{Gr80}
B. H. Gross, \emph{Arithmetic on Elliptic Curves with Complex Multiplication},  Lecture Notes in Mathematics \textbf{776}, Springer, 1980.

\bibitem[He20]{He20} E. Hecke, \emph{Eine neue Art von Zetafunktionen und ihre Beziehungen zur Verteilung der Primzahlen}. Zweite Mitteilung, Math. Zeit.~\textbf{6} (1920), 11--51.

\bibitem[SGL]{SGL} H.U. Besche, B. Eick, E. O'Brien, \emph{Small Groups Library}, \emph{A millennium project: constructing Small Groups}, International Journal of Algebra and Computation {\bf 12} (2001), 623--644.

\bibitem[KS08]{KS08} K.S. Kedlaya, A.V. Sutherland, \emph{Computing L-series of hyperelliptic curves},
Algorithmic Number Theory Symposium---ANTS VIII, Lecture Notes in Comp. Sci. \textbf{5011}, Springer, 2008, 312--326.

\bibitem[Igu60]{Igu} J. Igusa, \emph{Arithmetic variety of moduli for genus two}, Annals of Mathematics {\bf 72} (1960), 612--649.

\bibitem[MRS07]{MRS07} B. Mazur, K. Rubin, A. Silverberg, \emph{Twisting commutative algebraic groups}, Journal of Algebra \textbf{314} (2007), 419--438.

\bibitem[OEIS]{OEIS} OEIS, The On-Line Encyclopedia of Integer Sequences, published electronically at \url{http://oeis.org}, 2011.

\bibitem[Pila90]{Pila90} J. Pila, \emph{Frobenius maps of abelian varieties and finding roots of unity in finite fields},
Mathematics of Computation \textbf{55} (1990), 745--763.

\bibitem[RS11]{RS} K. Rubin, A. Silverberg, \emph{Choosing the correct elliptic curve in the CM method}, Mathematics of Computation, {\bf 79} (2010), 545--561.

\bibitem[Se68]{Se68} J.-P. Serre, {\em Abelian $\ell$-adic Representations and Elliptic Curves}, Research Notes in Mathematics \textbf{7}, A K Peters, 1998.

\bibitem[Se12]{Se12} J.-P. Serre, \emph{Lectures on $N_X(p)$}, Research Notes in Mathematics \textbf{11}, CRC Press, 2012.

\bibitem[Sil09]{Sil09} J. H. Silverman, \emph{The Arithmetic of Elliptic Curves}, 2nd edition, Springer, New York, 2009.

\bibitem[Su07]{S07} A.V. Sutherland, \emph{Order computations in generic groups}, PhD thesis, Massachusetts Institute of Technology, 2007.

\bibitem[Su11a]{S11a} A.V. Sutherland, \emph{Structure computation and discrete logarithms in finite abelian $p$-groups}, Mathematics of Computation {\bf 80} (2011), 477--500.

\bibitem[Su11b]{S11b} A.V. Sutherland, \texttt{smalljac} software library, version 4.0, available at \url{http://math.mit.edu/~drew}, 2011.

\end{thebibliography}
\end{document}